\numberwithin{equation}{section}
\numberwithin{figure}{section}
\renewcommand*\env@cases[1][1.2]{%
  \let\@ifnextchar\new@ifnextchar
  \left\lbrace
  \def\arraystretch{#1}%
  \array{@{}c@{\quad}l@{}}%
}
\theoremstyle{plain}
\newtheorem{theorem}{Theorem}[section]
\newtheorem{lemma}[theorem]{Lemma}
\newtheorem{proposition}[theorem]{Proposition}
\newtheorem{corollary}[theorem]{Corollary}
\theoremstyle{definition}
\newtheorem{definition}[theorem]{Definition}
\newtheorem{remark}[theorem]{Remark}
\newcommand{\pp}{\mathfrak{p}} 
\newcommand{\EE}{\calE}
\newcommand{\damp}{\calD_\alpha}
\newcommand{\BV}{\mathrm{BV}}
\newcommand{\dom}{\operatorname{dom}}
\newcommand{\dv}{\operatorname{div}}
\newcommand{\wt}{\widetilde} 
\newcommand{\ol}{\overline}
\def\dd{\,\mathrm{d}} 
\def\ee{\mathrm{e}}     
\def\R{{\mathbb R}} \def\N{{\mathbb N}}  
\newcommand{\supp}{\operatorname{supp}}
\newcommand{\set}[2]{ \{\, #1 \: | \: #2 \,\} }
  \def\bbI{{\mathbb I}}
\def\bbV{{\mathbb V}}  
\def\calD{{\mathcal D}} \def\calE{{\mathcal E}} \def\calF{{\mathcal F}}
\begin{document}

\title{
Time-asymptotic self-similarity
of \\the
damped compressible Euler equations\\
in parabolic scaling variables
}

\author{Thomas Eiter\thanks{Freie Universit\"at Berlin, Department of Mathematics and Computer Science, Arnimallee 14, 14195 Berlin, Germany.} \textsuperscript{,}%
\thanks{Weierstrass Institute for Applied Analysis and Stochastics, Mohrenstr.~39, 10117 Berlin, Germany. \\
Email: thomas.eiter@wias-berlin.de\\
\phantom{Email: }stefanie.schindler@wias-berlin.de}
\ and Stefanie Schindler\footnotemark[2]}

\date{}
 
\maketitle

\begin{abstract}
We study the long-time behavior of solutions to the compressible Euler equations 
with frictional damping
in the whole space,
where we prescribe direction-dependent values for the density at spatial infinity.
To this end, we transform the system into parabolic scaling variables
and derive a relative entropy inequality,
which allows to conclude the convergence of the density
towards a self-similar solution to the porous medium equation
while the associated limit momentum 
is governed by Darcy's law.
Moreover, we obtain convergence rates that
explicitly depend on the flatness of the limit profile.
While we focus on weak solutions in the one-dimensional case,
we extend our results to energy-variational solutions in the multi-dimensional setting.

\medskip

\noindent
\emph{Keywords:} Euler equations, compressible fluid, frictional damping, porous medium equation, Darcy's law, self-similarity, parabolic scaling,
relative entropy, energy-variational solutions.
\smallskip

\noindent
\emph{MSC2020:} 
35B40, 
35C06, 
35Q31, 
76N10, 
76N15, 
76S05. 
\end{abstract}

\section{Introduction}

Consider the flow of 
an isentropic gas
subject to external friction, such as a flow through a porous medium,
described by the damped Euler equations. We first consider these equations
on the real line, given by
\begin{equation} \label{eq:Euler.original}
\begin{aligned}
\wt \rho_t + \wt m_x &= 0, \\
\wt m_t + \Big(\frac{\wt m^2}{\wt\rho} + \pp(\wt \rho) \Big)_x + \alpha \wt m
&=0,
\end{aligned}
\end{equation}
where $t\in(0,\infty)$ and $x\in\R$ denote time and space variables, respectively. 
The functions $\wt \rho = \wt \rho(t,x) \geq 0$ and $\wt m = \wt m(t,x)\in\R$ denote the density and the momentum of the fluid.
Moreover, $\pp = \pp(\wt \rho)$ represents the pressure as a function of the density, where $ \pp'(z) >0$ for all $z\in(0,\infty)$. 
An important particular case is the polytropic gas, 
which fulfills the so-called $\gamma$-law 
\begin{equation}\label{eq:pres.gamma}
\pp(\wt \rho) = k \wt \rho^\gamma \quad\text{for some } \gamma\geq 1, \ k>0.
\end{equation}
The term $\alpha \wt m$ in the momentum equation describes frictional damping
with friction coefficient $\alpha \geq 0$.
The initial data 
$(\wt\rho_0, \wt m_0)=(\wt \rho(0,\cdot),\wt m(0,\cdot))$ satisfy
\[
(\wt \rho_0,\wt m_0)(x) \to (\rho_\pm,0)
\quad \text{ as } x \to \pm \infty
\]
for given constants $\rho_+,\rho_-\geq 0$,
which corresponds to possibly infinite mass reservoirs
at positive and negative spatial infinity.

The natural entropy for the damped hyperbolic system~\eqref{eq:Euler.original}
is given by 
\[
\wt \eta(\wt\rho,\wt m)
= \frac{\wt m^2}{2\wt\rho}  +h(\wt \rho),
\]
which is the sum of kinetic and internal energy.
Here, the function $h$ and the pressure $\pp$ are linked to each other through
\begin{equation} \label{eq:h.properties}
\pp(z) = z  h'(z) - h(z)
\quad\text{ and }\quad
h''(z) = \frac{\pp'(z)}{z},
\end{equation}
where the second identity follows directly from the first one. 
If we assume~\eqref{eq:pres.gamma},
then we may choose the potential $h$ of the form
\begin{equation}
\label{eq:h_gammaLaw}
h(\wt \rho) = \begin{cases} \frac{1}{\gamma{-}1} \pp(\wt \rho) = \frac{k}{\gamma{-}1} \wt \rho^\gamma &
  \text{for } \gamma >1,\\ 
k \, \wt\rho \log \wt\rho& \text{for } \gamma = 1.
\end{cases}
\end{equation}
The entropy flux associated with $\wt \eta$ is 
given by
\[
\wt q(\wt \rho, \wt m) = \frac{\wt m}{\wt\rho}\big( \frac{\wt m^2}{2\wt\rho}   + \wt \rho  h'(\wt \rho)\big).
\]
Then, smooth solutions to \eqref{eq:Euler.original} satisfy the local entropy identity
\begin{equation} \label{eq:local.entr.original}
    \wt \eta(\wt \rho, \wt m)_t + \wt q (\wt \rho, \wt m)_x +\alpha \frac{\wt m^2}{\wt\rho}=0,
\end{equation}
which expresses the energy evolution during the process.

In this article, we are interested in the long-time behavior of solutions 
to the damped hyperbolic system~\eqref{eq:Euler.original}.
If the equations were formulated in a finite interval in space
with suitable boundary conditions,
one would expect solutions to converge to a steady state as $t\to\infty$.
In contrast, system~\eqref{eq:Euler.original}
is formulated on the real line,
and we may have an infinite mass supply at $\pm\infty$.
While for $\rho_+=\rho_-$ the convergence to a constant equilibrium state is expected,
the limit behavior in the case $\rho_+\neq\rho_-$
is more involved.
Hsiao and Liu~\cite{HsiLiu93NDP} firstly proved that the asymptotic behavior of solutions to~\eqref{eq:Euler.original} can be described by the self-similar solutions $(\wt \rho_{\mathrm{se}},\wt m_{\mathrm{se}})$
of the equations
\begin{equation*}
\begin{aligned}
\wt \rho_t + \wt m_x &= 0, \\
\pp(\wt \rho)_x + \alpha \wt m &= 0,
\end{aligned}
\end{equation*}
with the same spatial limits $(\wt \rho(x),\wt m(x))\to (\rho_\pm,0)$ as $x\to \pm \infty$.
Equivalently, this system can be written as the (parabolic) porous medium equation for the density $\wt \rho$,
where the momentum $\wt m$ is determined by Darcy's law,
that is,
\begin{equation} \label{eq:PME.Darcy.original}
\wt \rho_t = \frac1\alpha \pp(\wt \rho)_{xx}, 
\qquad
\wt m = -\frac1{\alpha} \,\pp(\wt \rho)_x.  
\end{equation}
In the literature, such self-similar solutions $(\wt\rho_{\mathrm{se}}, \wt m_{\mathrm{se}})$ are usually called \textit{nonlinear diffusion waves}, as for example in \cite{CuYiZh18CND}.
In the work \cite{HsiLiu93NDP}, 
a uniform convergence rate was shown 
under increased regularity assumptions on the solution $(\rho,m)$.
In~\cite{HsiLiu92CNDW}, the same authors derived similar results
for the damped $p$-system,
which is a formulation of system~\eqref{eq:Euler.original} in Lagrangian coordinates.
Since then, these results have been extended in various ways, see \cite{Nish96CRNDW, SerXia97ABLWES, NiWaYa00LpCNDW, HuPa03CRCE, Mei09NDW, GenTan19CRND} and references therein, especially by derivation of improved convergence rates but also by allowing for more general solution classes. 
The equations have also been studied subject to a time-dependent damping, see for instance \cite{CuYiZh18CND, GeHuWu22L2CNDW, GeHuaJuWu23TAE}.
In the case of finite mass, we refer to \cite{HuFeMa05CBS, HuPaWa11L1CBS, GenHua19L1CBS}, where the convergence towards the famous Barenblatt solution
is shown.

In the paper \cite{LatTza2013REDR},
the so-called diffusive scaling of the system~\eqref{eq:Euler.original} is considered,
and it is shown that weak entropy solutions converge to
smooth solutions $(\wt\rho,\wt m)$ to~\eqref{eq:PME.Darcy.original} 
as the scaling parameter goes to zero.
This convergence is quantified in terms of the relative entropy.
Here, we follow this idea of deriving relative entropy estimates, 
but instead of the physical variables $(t,x)$, we perform the analysis
in parabolic scaling variables defined by
\begin{equation}
\label{eq:scaling.variables}
\tau := \log(1{+}t) \quad \text{ and } \quad y:= \frac{x}{\sqrt{1{+}t}}.
\end{equation}
We transform the equations \eqref{eq:Euler.original} into these new coordinates.
With $\rho(\tau,y) := \wt \rho(t,x)$ and $n(\tau,y) := \ee^{\tau/2} m(\tau,y) = (1{+}t)^{1/2}\, \wt m(t,x)$,
a straightforward computation yields 
\begin{equation} \label{eq:Euler.trans}
\begin{aligned}
\rho_\tau &= \frac{y}{2} \rho_y - n_y, \\
n_\tau &= \frac{y}{2} n_y + \frac12 n - \Big( \frac{n^2}{\rho} \Big)_y - \ee^\tau \big( \pp(\rho)_y + \alpha n\big),
\end{aligned}
\end{equation}
while the according initial data $(\rho_0,n_0)= (\rho(0,\cdot),n(0,\cdot))$ satisfy the asymptotic conditions
\begin{equation} \label{eq:BC}
    (\rho_0,n_0)(y) \to (\rho_\pm,0)
    \quad \text{ as }y\to \pm \infty .
\end{equation}
Observe that we scale the magnitude of the momentum variable 
to ensure that Darcy's law can be fulfilled asymptotically in time. 
Indeed, 
by this scaling, the pressure term and the damping term in~\eqref{eq:Euler.trans}
have the same exponentially growing prefactor,
which indicates that Darcy's law needs to be satisfied asymptotically to equilibrate the system.
Moreover, the scaling creates the new drift terms $\frac{y}{2} \rho_y $ and $\frac{y}{2} n_y$, which will provide the important terms in the relative entropy inequality
that lead to exponential decay
with respect to the scaled time $\tau$.

Transformation of
the porous medium equation from~\eqref{eq:PME.Darcy.original} 
into parabolic scaling variables and combination with the spatial limits yields
 \begin{equation} \label{eq:PME.trans}
     \rho_\tau - \frac y2 \rho_y = \frac1\alpha \pp(\rho)_{yy} \quad \text{ with } \quad \lim_{ y \to \pm \infty} \rho(\tau,y) = \rho_\pm.
 \end{equation}
Steady-state solutions $\rho^*=\rho^*(y)$ of~\eqref{eq:PME.trans} correspond to self-similar solutions $\wt \rho_{\mathrm{se}}$ to the original porous medium equation via $\wt \rho_{\mathrm{se}}(t,x) = \rho^*(y)$,
and these self-similar solutions appear as long-time limits of general solutions $\wt\rho$.
Expressed in scaling variables $(\tau,y)$, 
this means that the solution $\rho$ to~\eqref{eq:PME.trans} 
converges to $\rho^*$ as $\tau\to\infty$.
In this paper,
we perform a similar convergence study 
for solutions $(\rho,n)$ to the scaled damped Euler equations \eqref{eq:Euler.trans}, 
which for $\tau \to \infty$ converge towards the similarity profile $(\rho^*,n^*)$ 
characterized by the profile equations
\begin{equation} \label{eq:profileEq}
\begin{aligned}
0 = \frac1\alpha \pp(\rho^*)_{yy} + \frac y2 \rho^*_y, \quad \
0 = n^* + \frac1\alpha \pp(\rho^*)_y
\qquad \text{with}\ 
\lim_{y\to\pm\infty}(\rho^*,n^*)(y) =(\rho_\pm, 0).
\end{aligned}
\end{equation}
This approach is in accordance with~\cite{MieSch24CSSP},
where the long-time behavior of solutions to a reaction-diffusion system was studied.
The scaling~\eqref{eq:scaling.variables} 
emphasizes the time-asymptotic parabolic behavior
of the system,
and it was already used by Gallay and Raugel~\cite{GalRau98SVAE,GalRau97STWD,GalRau00SVSH} to analyze the asymptotic behavior of scalar damped hyperbolic equations. 
To the authors' knowledge, the present paper is the first one to adapt this method to a damped hyperbolic system
and to combine it with a relative entropy approach.

The general idea of proving convergence by a relative entropy estimate
traces back to the works by Dafermos \cite{Dafe79SMTF, Dafe79SLTS} and DiPerna \cite{Dipe79USHC}.
It has been refined and generalized by many authors in recent years; see for example \cite{HuaPan06ABSDCE, LatTza2013REDR, GenHua19L1CBS, GeHuWu22L2CNDW} for problems related the damped Euler equations.
We shall first show that the relative entropy $\EE$
satisfies an inequality
of the form
\begin{equation}
\label{eq:relent.intro}
\frac{\mathrm d}{\mathrm d \tau}\EE(\tau)
+ \damp(\tau)+\frac{1}{2}\EE(\tau)\leq\Xi,
\end{equation}
where the term $\frac12  \EE(\tau )$ arises from the drift terms 
$\frac{y}{2} \rho_y $ and $\frac{y}{2} n_y$ in~\eqref{eq:Euler.trans}
and is thus due to the parabolic scaling. 
Moreover, $\damp\geq 0$ is a dissipative term associated with the frictional damping, while the term $\Xi$ does not have a clear sign
and has to be controlled by the other terms.
To this end, the term $\frac12  \EE(\tau )$
is necessary,
and its appearance can be considered the main effect of the transformation into scaling variables.
Moreover, it finally leads to the exponential decay of $\EE(\tau)$ as $\tau\to\infty$
by means of Gronwall's inequality. 
However, since the limiting profile is not a true steady state if we consider different limits at spatial infinity,
the function $\EE$ cannot be a true Lyapunov function.
Instead, we only show that $\EE$ becomes a Lyapunov function for large $\tau$, which is sufficient for the convergence and comparable with the results in \cite{MieSch24CSSP}.

Notice that smooth solutions to~\eqref{eq:Euler.trans} even satisfy~\eqref{eq:relent.intro} as an equality.
Since those exist only up to a finite time in general,
we consider a framework of weak solutions instead,
where results on the global existence 
of solutions satisfying an entropy inequality
are available in the one-dimensional case,
see~\cite{DiCheLuo89Conv,HuaPan06ABSDCE}.
However, in the multi-dimensional case,
analogous results for weak solutions seem to be out of reach,
which is why 
we instead study
a further relaxed solvability concept in this setting, 
namely energy-variational solutions.
This concept was recently introduced for a general class of hyperbolic conservation laws in~\cite{EitLas24envarhyp}, 
including the compressible Euler equations,
and has further been investigated for other models, see~\cite{ALR24envarvisc,EiHoLa23wsevsviscoel,LasRei23envarEL,Las24envarMDinc,Las25envarstr}.
We establish the global existence of energy-variational solutions
and show that our results on
the long-time behavior of weak solutions 
can be transferred to this case.

The article is structured as follows. 
In Section~\ref{se:MainResults} we introduce 
weak solutions 
to the scaled system~\eqref{eq:Euler.trans},
and we present the main results on the convergence towards 
solutions of the profile equations~\eqref{eq:profileEq} as $\tau\to\infty$
in the one-dimensional case.
In Section~\ref{sec:relent} 
we derive a relative entropy inequality
for weak solutions,
which we employ in Section~\ref{se:ConvProof} 
to prove the main results in the one-dimensional case. 
Finally, in Section~\ref{sec:envar} we 
consider energy-variational solutions to the damped Euler equations in multiple dimensions. 
We show their global-in-time existence,
and we extended our results on the long-time behavior
to this generalized solution class
in a straightforward way.

\section{Convergence results in one dimension} \label{se:MainResults}

This section is dedicated to presenting our main results concerning the convergence of weak solutions to the scaled system~\eqref{eq:Euler.trans} towards the similarity profile.
We consider weak solutions in the following sense.

\begin{definition} \label{def:weakEntSol.trans}
    Let $(\rho_0,n_0)\in L^1_\mathrm{loc}(\R)^2$ such that 
    $\rho_0\geq0$ a.e.
    We call a pair $(\rho, n)\in L^1_{\mathrm{loc}}([0,\infty)\times \R)^2$ 
     with $\frac{n^2}{\rho},\,\pp(\rho)\in L^1_{\mathrm{loc}}([0,\infty)\times \R)$ 
    a \textit{weak solution} to \eqref{eq:Euler.trans} if it satisfies $\rho\geq 0$ a.e.~and 
    \begin{align} 
    \label{eq:conteq.weak}
       - \int_0^\infty\!\!\!\int_{\R} \phi_{\tau} \,  \rho + \phi_y \, \big(- \frac y2 \rho + n \big) - \frac12 \phi \rho  \dd y \dd \tau & = \int_\R \phi(0,y) \, \rho_0  \dd y, \\
       \label{eq:momeq.weak}
        - \int_0^\infty\!\!\!\int_{\R} \psi_{\tau} \, n + \psi_y \, \Big(- \frac y2 n + \frac{ n^2}{\rho} + \ee^{\tau} \pp(\rho) \Big) -  \psi \, \ee^{\tau} \alpha n  \, \dd y \dd \tau  
      & =  \int_\R \psi(0,y) \, n_0 \dd y ,
    \end{align}
   for all $\phi,\psi \in C_c^\infty([0,\infty) \times \R)$.
\end{definition}
 
Notice that one can also derive this weak formulation by transforming the weak formulations of the unscaled equations \eqref{eq:Euler.original} 
into parabolic coordinates.

Here, the quotient $\frac{n}{\rho}$
is only well defined either if $\rho\neq 0$, or if $\rho=0$ and $n=0$,
where we set $\frac{0}{0}:= 0$. 
In particular, 
the assumption
$\frac{n^2}{\rho}\in L^1_{\mathrm{loc}}([0,\infty)\times \R)$
implies that $n=0$ almost everywhere where $\rho=0$.

Additionally, the weak solutions need to satisfy a suitable form of an entropy inequality.
To begin with, we write the entropy pair in terms of the new variables $(\tau,y)$ and for the functions $(\rho,n)$, so that it is compatible with the transformed system~\eqref{eq:Euler.trans}.
We define the entropy function and the associated flux by 
\[
\eta(\tau; \rho,n) := \ee^{-\tau} \frac{n^2}{2 \rho} + h(\rho) \quad \text{ and } \quad q(\tau; \rho,n) := \ee^{-\tau}  \frac{n^3}{2\rho^2} + n h'(\rho),
\]
where the exponential factors result from the scaling of $m=\ee^{-\tau/2} n$,
and we additionally scaled the magnitude of $q$ 
in such a way that~\eqref{eq:local.entr.original} 
is transformed into
    \begin{equation} \label{eq:eta.diss.trans}
    \eta(\tau;\rho, n)_\tau - \frac{y}{2}\eta(\tau;\rho, n)_y + q (\tau;\rho, n)_y +\alpha \frac{n^2}{\rho} = 0.
      \end{equation}
For $\ol\rho\geq 0$ and $\ol n\in\R$,
we further introduce the corresponding relative entropy and relative entropy flux by
\begin{equation}
\label{eq:relentr.relentflux}
\begin{aligned}
\eta(\tau;\rho,n\,\vert\,\ol\rho,\ol n)
&=\frac{1}{2}\ee^{-\tau}\rho\Big|\frac{n}{\rho}-\frac{\ol n}{\ol \rho}\Big|^2 + h(\rho\,\vert\,\ol\rho),
\\
q(\tau;\rho,n\,\vert\,\ol\rho,\ol n)
&=\frac{1}{2}\ee^{-\tau}n\Big|\frac{n}{\rho}-\frac{\ol n}{\ol \rho}\Big|^2 
+ \rho \big( h'(\rho)-h'(\ol\rho) \big)\Big(\frac{n}{\rho}-\frac{\ol n}{\ol\rho}\Big) 
+ \frac{\ol n}{\ol\rho} h(\rho\,\vert\,\ol\rho),
\end{aligned}
\end{equation}
respectively,
where the relative internal energy is defined by
\[
h(\rho\,\vert\,\ol\rho)=h(\rho)-h(\ol\rho)-h'(\ol\rho)(\rho-\ol\rho).
\] 
If $(\rho,n)$, $(\ol\rho,\ol n)$ 
are suitable functions on $\R$,
we define the total relative entropy by
\begin{equation}
\label{eq:relent.total}
\EE(\tau;\rho,n\,\vert \,\ol\rho,\ol n)
=\int_\R \eta\big(\tau;\rho(y),n(y)\,\vert\,\ol\rho(y),\ol n(y)\big)\dd y.
\end{equation}
Notice that $\EE(\tau;\rho,n\,\vert \,\ol\rho,\ol n)$ is only finite
if the reference function $(\ol\rho,\ol n)$ has the same behavior at infinity as $(\rho,n)$. 
Moreover, by finiteness of $\EE(\tau;\rho,n\,\vert \,\ol\rho,\ol n)$ we implicitly assume that
$\rho,\ol\rho\geq0$ a.e.
Special reference densities are given by the function
\begin{equation}
    \label{eq:rho.hat}
    \hat\rho\colon\R\to\R,\qquad
    \hat\rho(y)=
    \begin{cases}
        \rho_- & \text{for } y<0,
        \\
        \rho_+ & \text{for }y\geq 0,
    \end{cases}
\end{equation}
and by its smoothened version $\rho_\infty \in C^1(\R)$ with
\begin{equation}\label{eq:rhoinfty}
    \exists \, R>0: \qquad \rho_\infty(y)=\rho_+ \text{ for }y>R,
    \qquad
    \rho_\infty(y)=\rho_- \text{ for }y<-R.
\end{equation}
We study the asymptotic conditions~\eqref{eq:BC} in the sense that
$\EE(0;\rho_0,n_0\,\vert \,\hat\rho,0)<\infty$,
which is equivalent to $\EE(0;\rho,n\,\vert \,\rho_\infty,0)<\infty$,
as follows from Proposition~\ref{prop:relent.difference} below.

The following two versions of entropy inequalities can now be formulated.
\begin{enumerate}[label=\textbf{(E\arabic*)}]
    \item
    \label{item:E1}
    It holds $\rho, \frac{n}{\rho}\in L^\infty_\mathrm{loc}([0,\infty);L^\infty(\R))$
     and $h(\rho)\in L^1_{\mathrm{loc}}([0,\infty)\times \R)$, 
    and for all $\chi\in C_c^\infty([0,\infty) \times \R)$ with $\chi\geq 0$, we have
       \begin{equation}
   \label{eq:weakEnt.scaled}
   \begin{aligned}
   \int_0^\infty\!\!\!\int_\R {-}  \chi_{\tau} \eta (\tau; \rho,n)
   {+} \chi_y\Big(\frac{y}{2}\eta (\tau; \rho,n) {-} q(\tau;\rho,\eta)\Big)
   &{+} \chi\Big(\frac{1}{2}\eta (\tau; \rho,n) {+} \alpha \frac{n^2}{\rho}\Big)
   \dd y \dd\tau
   \\
   &
   \leq \int_\R \chi(0,y)\,\eta(0;\rho_0,n_0)\dd y.
   \end{aligned}  
   \end{equation}
   \item
 \label{item:E2}
  There exists $\rho_\infty \in C^1(\R)$ satisfying~\eqref{eq:rhoinfty}
 such that
 for all $\varphi \in C_c^\infty([0,\infty))$ with $\varphi\geq 0$, we have
 \begin{equation}
   \label{eq:weakEnt.total.rhoinf}
   \begin{aligned}
   &\int_0^\infty\! - \varphi_{\tau}\, \EE (\tau; \rho,n\,\vert\,\rho_\infty,0)
   + \varphi \Big(\frac{1}{2}\EE (\tau; \rho,n\,\vert\,\rho_\infty,0) + \int_\R\alpha \frac{n^2}{\rho}\dd y\Big) \dd\tau
   \\
   &\qquad\quad
   \leq  \varphi(0)\,\EE(0;\rho_0,n_0\,\vert\,\rho_\infty,0)
   + \int_0^\infty \!\varphi\int_\R h'(\rho_\infty)_y \Big(\frac{y}{2}(\rho-\rho_\infty)-n\Big)\dd y\dd\tau.
   \end{aligned}  
\end{equation}
\end{enumerate}
Inequality~\eqref{eq:weakEnt.scaled} is a weak formulation of~\eqref{eq:eta.diss.trans} relaxed to an inequality. It can equivalently be derived by transforming an analogously relaxed version of \eqref{eq:local.entr.original} into parabolic scaling variables $(\tau,y)$. 
The local entropy inequality~\eqref{eq:weakEnt.scaled} is a classical assumption,
and weak solutions satisfying~\eqref{eq:weakEnt.scaled} are usually called \textit{weak entropy solutions}.
Moreover, the additional boundedness assumptions on $\rho$ and $\frac{n}{\rho}$ in~\ref{item:E1}
seem to be necessary for the derivation of an inequality for the total relative entropy.
Observe that the global existence of weak solutions satisfying~\ref{item:E1}
was shown in~\cite{DiCheLuo89Conv} for $\pp(\rho)=k\rho^\gamma$ with $1<\gamma<3/2$,
which was extended in~\cite{HuaPan06ABSDCE} to
the case $\gamma <3$.

In contrast, this boundedness condition is not necessary if we assume~\ref{item:E2},
where the total relative entropy with respect to a given
reference density $\rho_\infty$
is finite.
During our analysis,~\eqref{eq:weakEnt.total.rhoinf} 
will turn out
to be a special case of a more general relative entropy inequality
derived in Theorem~\ref{thm:relentr.weak.E1}
under the condition~\ref{item:E1},
and it is thus less restrictive,
see also Remark~\ref{rem:E1.implies.E2} below. 
Moreover, the relative entropy inequality derived in Theorem~\ref{thm:relentr.weak.E2} shows that 
the criterion is independent of the specific choice of the reference density $\rho_\infty$,
and one may even consider a $\tau$-dependent reference density,
which would result in an additional term on the right-hand side of~\eqref{eq:weakEnt.total.rhoinf}.
We further have that inequality~\eqref{eq:weakEnt.total.rhoinf} is equivalent to the pointwise inequality
\[
\begin{aligned}
    &\EE (\tau; \rho(\tau),n(\tau)\,\vert\,\rho_\infty,0)
   + \int_\sigma^\tau\!\!\Big(\frac{1}{2}\EE (\tau'; \rho,n\,\vert\,\rho_\infty,0) + \int_\R\alpha \frac{n^2}{\rho}\dd y\Big) \dd\tau'
   \\
   &\qquad
   \leq \EE(\sigma;\rho(\sigma),n(\sigma)\,\vert\,\rho_\infty,0)
   + \int_\sigma^\tau\!\!\!\int_\R h'(\rho_\infty)_y \Big(\frac{y}{2}(\rho-\rho_\infty)-n\Big)\dd y\dd\tau'
\end{aligned}
\]
for a.a.~$\sigma,\tau\in[0,\infty)$ with $\sigma<\tau$, including $\sigma=0$ with $(\rho(0),n(0))=(\rho_0,n_0)$,
see~\cite[Lemma~2.5]{EitLas24envarhyp} for instance.

To prepare our main results in the one-dimensional case, 
we recall existence and decay properties of the 
similarity profiles for the porous medium equation in~\eqref{eq:PME.Darcy.original}
with non-zero limits.

\begin{theorem}
    \label{thm:profile}
Let the pressure $\pp$ be of the form~\eqref{eq:pres.gamma} and $\alpha>0$.
For all pairs $\rho_\pm >0$, there exists a function $\rho^*\in C^\infty(\R)$
that satisfies $0 = \frac1\alpha \pp(\rho^*)_{yy} + \frac y2 \rho^*_y$ in $\R$ and
$\rho^*(y)\to\rho_\pm$ as $y\to\pm\infty$. 
Moreover, $\rho^*$ is monotone and there are $C,c>0$ such that
\begin{equation}
\label{eq:pme.convergence}
\big|\rho^*(y)-\hat\rho(y)\big|
+|\rho^*_y(y)|+|\rho^*_{yy}(y)|
\leq C |\rho_+-\rho_-| \ee^{-c\alpha y^2}
\end{equation}
for all $y\in\R$, where $\hat\rho$ is defined in~\eqref{eq:rho.hat}.
\end{theorem}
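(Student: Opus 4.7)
The plan is to rewrite \eqref{eq:profileEq} as a first-order ODE system for $\rho^*$ together with the auxiliary variable $W:=\tfrac{1}{\alpha}\pp(\rho^*)_y$, and to integrate the equation for $W$ explicitly. Using~\eqref{eq:pres.gamma} so that $\pp'(z)=k\gamma z^{\gamma-1}$, and assuming without loss of generality $\rho_-<\rho_+$ (the case $\rho_-=\rho_+$ being trivial with $\rho^*\equiv\rho_\pm$), the profile equation is equivalent to
\[
\rho^*_y=\frac{\alpha W}{k\gamma(\rho^*)^{\gamma-1}}, \qquad W_y=-\frac{\alpha y W}{2k\gamma(\rho^*)^{\gamma-1}},
\]
and the second equation, being linear in $W$ once $\rho^*$ is known, integrates to
\[
W(y)=W(0)\exp\!\left(-\int_0^y\frac{\alpha s}{2k\gamma(\rho^*(s))^{\gamma-1}}\dd s\right).
\]

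For existence, I would set up a shooting argument in the two-parameter family $(a,b)=(\rho^*(0),W(0))$ with $a\in(\rho_-,\rho_+)$ and $b>0$. The displayed formula guarantees $W>0$ throughout, hence $\rho^*$ is strictly increasing and stays in a compact subset of $(0,\infty)$; the ODE thus extends globally in $y$, and the limits $L_\pm(a,b):=\lim_{y\to\pm\infty}\rho^*(y)$ exist and depend continuously on $(a,b)$. A two-step intermediate-value argument---first adjust $b$ for each fixed $a$ to achieve $L_+(a,b)-L_-(a,b)=\rho_+-\rho_-$, then vary $a$ to hit $L_-(a,b(a))=\rho_-$---then yields a pair with $(L_-,L_+)=(\rho_-,\rho_+)$. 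Alternatively, one may invoke the classical existence of self-similar solutions of the porous medium equation with step initial datum $\hat\rho$, which produces the profile $\rho^*$ directly. Smoothness is then automatic from standard ODE regularity, since $\rho^*\in[\rho_-,\rho_+]\subset(0,\infty)$.

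For the decay estimate, I would insert the bounds $\rho_-^{\gamma-1}\leq(\rho^*)^{\gamma-1}\leq\rho_+^{\gamma-1}$ into the exponential formula for $W$, which yields
\[
W(0)\exp\!\left(-\tfrac{\alpha y^2}{4k\gamma\rho_-^{\gamma-1}}\right)\le W(y)\le W(0)\exp\!\left(-\tfrac{\alpha y^2}{4k\gamma\rho_+^{\gamma-1}}\right) \quad \text{for all }y\in\R.
\]
Plugging the lower bound into the mass identity
\[
\rho_+-\rho_-=\int_\R\rho^*_y\dd y=\frac{\alpha}{k\gamma}\int_\R\frac{W(y)}{(\rho^*(y))^{\gamma-1}}\dd y
\]
and evaluating the resulting Gaussian integral, I obtain $W(0)\leq C|\rho_+-\rho_-|$ with $C=C(\rho_\pm,k,\gamma,\alpha)$. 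Combined with the pointwise upper bound this gives the required estimate for $\rho^*_y=\alpha W/(k\gamma(\rho^*)^{\gamma-1})$, then for $\rho^*-\hat\rho$ by integrating $\rho^*_y$ from $\pm\infty$, and finally for $\rho^*_{yy}$ by differentiating the identity for $\rho^*_y$, using the ODE for $W_y$, and absorbing the spurious factor $|y|\ee^{-c\alpha y^2}$ into $\ee^{-c'\alpha y^2}$ with a slightly smaller $c'>0$.

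The hardest step is the existence argument: two asymptotic values must be matched using two shooting parameters, which is why the cleanest route in practice is to appeal to the classical theory of porous-medium similarity profiles. Once existence is secured, every bound in~\eqref{eq:pme.convergence} is an elementary consequence of the first-integral formula for $W$ together with standard Gaussian calculus, the crucial structural point being that the uniform positivity and boundedness of $\rho^*$ make the exponent in that formula comparable to $\alpha y^2$ up to multiplicative constants depending on $\rho_\pm,k,\gamma$.
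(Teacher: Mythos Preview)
Your proposal is correct and in fact goes well beyond what the paper does: the paper's ``proof'' of this theorem consists solely of references to \cite{VanPel77CSSN,HsiLiu92CNDW} (and, for a monotone-operator approach, \cite{MieSch25ESPS}), with no argument given. The first-integral formula for $W=\tfrac1\alpha\pp(\rho^*)_y$ that you derive, together with the two-sided Gaussian bounds obtained by sandwiching $(\rho^*)^{\gamma-1}$ between $\rho_-^{\gamma-1}$ and $\rho_+^{\gamma-1}$, is exactly the mechanism underlying the classical proofs in those cited works, so your approach is consistent with what the paper invokes.

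One small point worth tightening: in the shooting paragraph you assert that $\rho^*$ ``stays in a compact subset of $(0,\infty)$'' before the boundary values have been matched, but this is not automatic for arbitrary $(a,b)$; for large $b$ the backward trajectory can reach zero in finite $y$ (and the forward trajectory need not stay bounded). The classical remedy---which you implicitly adopt by offering the porous-medium self-similar solution as an alternative existence route---is either to restrict the shooting to parameters for which the trajectory remains in $(\rho_-,\rho_+)$, or to use a comparison/fixed-point argument rather than a raw two-parameter shoot. Since the paper itself defers all of this to the literature, your honest remark that ``the cleanest route in practice is to appeal to the classical theory'' is entirely in keeping with the paper's treatment. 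The decay estimates you give are correct and self-contained.
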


\begin{proof}
    See \cite{VanPel77CSSN,HsiLiu92CNDW}.
    A modern approach, which also allows to handle similarity profiles 
    in the vector-valued case
    and uses the theory of monotone operators, 
    was established recently in \cite{MieSch25ESPS}.
\end{proof}

As a consequence, it holds
$0 < \min\{\rho_-,\rho_+\} \leq \rho^*(y) \leq \max\{\rho_-,\rho_+\}$ for all $y\in \R$.
Moreover, $\rho^*$, $\rho^*_y$ and $\rho^*_{yy}$ converge faster than exponentially as $y\to\pm\infty$.

Now, we can formulate our main results on the long-time behavior of solutions to the damped Euler equations, which will be proven in Section~\ref{se:ConvProof}.

\begin{theorem} \label{thm:Conv}
Let the pressure $\pp$ be of the form~\eqref{eq:pres.gamma}.
Let $\alpha>0$ and $\rho_\pm>0$  with $\rho_+\neq\rho_-$,
and let $\hat\rho$ be as in~\eqref{eq:rho.hat}.
Consider the unique profile $(\rho^*,n^*)$ defined by~\eqref{eq:profileEq}.
For initial data $(\rho_0,n_0)\in L^1_{\mathrm{loc}}(\R)^2$
such that $\rho_0\geq 0$
and $\EE(0;\rho_0,n_0\,\vert\,\hat\rho,0)<\infty$, 
let $(\rho,n)$ be a weak solution to~\eqref{eq:Euler.trans} 
satisfying~\ref{item:E1} or~\ref{item:E2}.
Then $\EE(0;\rho_0,n_0\,\vert\,\rho^*,n^*)<\infty$
and 
\begin{equation} \label{eq:convergence}
     \EE(\tau;\rho(\tau),n(\tau) \, \vert \, \rho^*, n^*) \leq \ee^{-(\frac12{-}\theta)\tau + \frac12 \mu} \Big( \EE(0;\rho_0,n_0 \, \vert \, \rho^*,n^*) + \frac{K}{\theta} \Big) \quad \text{ for a.a.~} \tau >0.
\end{equation}
Here, the constants $\theta$, $\mu$ and $K$ are given by 
\begin{align*}
  \theta := \max\{2,\gamma{-}1\}\big\Vert \big[ \tfrac{1}{\alpha}h'(\rho^*)_{yy} \big]_+ \big\Vert_{L^\infty(\R)}, 
  \; \ 
    \mu := \Big\| \frac{|R^*|}{2 k (\rho^*)^\gamma} + \frac{3|R^*|}{2\rho^*}  \Big\|_{L^\infty(\R)},
    \; \
    K := \int_\R |R^*|\dd y,
\end{align*}
where $[\cdot]_+:=\max\{0,\cdot\}$ and
\begin{equation}
\label{eq:R.def}
R^*:=  - \frac{y}{2} n^*_y - \frac12 n^* + \Big( \frac{ (n^*)^2}{\rho^*}\Big)_y.
\end{equation}
In particular, if $\theta<\frac{1}{2}$, then
$
\EE(\tau;\rho(\tau),n(\tau) \, \vert \, \rho^*, n^*)
\to 0
$
as $\tau\to\infty$.
Moreover, in this case, it holds
\begin{equation}\label{eq:convergence.dissipation}
    \alpha\int_\tau^\infty\!\!\! \int_\R \rho \,\Big\vert \frac n\rho - \frac{n^*}{\rho^*} \Big\vert^2 \dd y\dd \tau'
    \leq \ee^{-(\frac12{-}\theta)\tau + \frac12 \mu} \Big( \EE(0;\rho_0,n_0 \, \vert \, \rho^*,n^*) + \frac{K}{\theta} \Big) 
    +2K\ee^{-\tau/2} 
\end{equation}
for a.a.~$\tau\geq 2\log(\frac{2\mu}{1-2\theta})$.
\end{theorem}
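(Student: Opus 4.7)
The plan is to apply the relative entropy inequality from Section~\ref{sec:relent} (Theorem~\ref{thm:relentr.weak.E1} or~\ref{thm:relentr.weak.E2}) with reference $(\rho^*,n^*)$, and then close the resulting differential inequality via Gronwall. As a preliminary step I would verify that $\EE(0;\rho_0,n_0\,\vert\,\rho^*,n^*)<\infty$. By Theorem~\ref{thm:profile}, $\rho^*\in[\min\rho_\pm,\max\rho_\pm]\subset(0,\infty)$ while $|\rho^*-\hat\rho|$, $|\rho^*_y|$ and $|\rho^*_{yy}|$, and hence also $n^*=-\tfrac1\alpha\pp(\rho^*)_y$, decay like $\ee^{-c\alpha y^2}$; thus $(\rho^*,n^*)$ differs from $(\hat\rho,0)$ by an $L^1\cap L^\infty$ perturbation, and Proposition~\ref{prop:relent.difference} transfers finiteness from $\EE(0;\rho_0,n_0\,\vert\,\hat\rho,0)$ to $\EE(0;\rho_0,n_0\,\vert\,\rho^*,n^*)$.

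Next, $(\rho^*,n^*)$ satisfies the first equation of~\eqref{eq:Euler.trans} exactly (by~\eqref{eq:profileEq}), while substituting it into the second equation of~\eqref{eq:Euler.trans} and using $\pp(\rho^*)_y+\alpha n^*=0$ leaves a momentum residual equal to $-R^*$, with $R^*$ as in~\eqref{eq:R.def}. I therefore expect the relative entropy inequality with this reference to take, in differential form,
\[
\frac{\d}{\d\tau}\EE(\tau)+\tfrac12\EE(\tau)+\alpha\int_\R\rho\Big|\tfrac n\rho-\tfrac{n^*}{\rho^*}\Big|^2\dd y\leq \calI_1(\tau)+\calI_2(\tau),
\]
where the $\tfrac12\EE$-contribution results from integration by parts against the parabolic drifts, $\calI_1$ is the classical flux-of-reference term of the form $\int (h'(\rho^*)_{yy}/\alpha)\,h(\rho\,\vert\,\rho^*)\dd y$ produced by transport of the relative internal energy along the reference velocity $n^*/\rho^*=-h'(\rho^*)_y/\alpha$, and $\calI_2$ is a bilinear residual coupling $R^*$ to $n/\rho-n^*/\rho^*$ and to $\rho-\rho^*$. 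For the $\gamma$-law potential, a pointwise comparison between $h(\rho\,\vert\,\rho^*)$ and $\pp(\rho)$ (split in the regimes $\rho\lessgtr 2\rho^*$) would yield $\calI_1\leq\theta\,\EE$, the factor $\max\{2,\gamma{-}1\}$ in $\theta$ arising from these two regimes. The term $\calI_2$ I would bound by a pointwise Young's inequality with weights proportional to $\tfrac{1}{2k(\rho^*)^\gamma}$ and $\tfrac{1}{2\rho^*}$ (exactly those appearing in $\mu$), absorbing one part into the dissipation $\alpha\int\rho|n/\rho-n^*/\rho^*|^2\dd y$ and one part into $\tfrac12\EE(\tau)$ — up to a time-decaying correction of order $\mu\,\ee^{-\tau/2}$ stemming from the $\ee^{-\tau}$-prefactor of the kinetic part of $\eta$ — and controlling what is left over by $K=\int_\R|R^*|\dd y$.

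These estimates combine into a differential inequality of the form
\[
\frac{\d}{\d\tau}\EE(\tau)+\Big(\tfrac12-\theta-\mu\,\ee^{-\tau/2}\Big)\EE(\tau)\leq K,
\]
and an integrating-factor argument with $\exp\!\big((\tfrac12-\theta)\tau-\mu\int_0^\tau\ee^{-s/2}\dd s\big)$, together with $\int_0^\infty\ee^{-s/2}\dd s=2$ and careful bookkeeping of the factor $\tfrac12$ in the Young splitting of $\calI_2$, yields~\eqref{eq:convergence}; under the assumption $\theta<\tfrac12$ this forces exponential decay. Inequality~\eqref{eq:convergence.dissipation} then follows by integrating the same differential inequality from $\tau$ to $\infty$: the threshold $\tau\geq 2\log\!\big(\tfrac{2\mu}{1-2\theta}\big)$ is precisely the condition $\mu\,\ee^{-\tau'/2}\leq\tfrac12-\theta$ for all $\tau'\geq\tau$, ensuring that the effective decay rate stays positive and the dissipation integral closes; the additive $2K\ee^{-\tau/2}$-term records the tail contribution of the $K$-forcing. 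The main obstacle I anticipate is the derivation of $\calI_1$ and $\calI_2$ with the sharp constants $\theta$, $\mu$ and $K$: since $R^*$ has no fixed sign, its contribution must be split pointwise between the dissipation and the entropy with weights tailored to the profile $\rho^*$ and to the $\tau$-dependent kinetic prefactor $\ee^{-\tau}$, and this splitting must be consistent with both frameworks~\ref{item:E1} and~\ref{item:E2}. Once the relative entropy inequality of Section~\ref{sec:relent} is at hand, the remaining Gronwall step is essentially mechanical.
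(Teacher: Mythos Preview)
Your overall strategy matches the paper's: invoke the relative entropy inequality of Section~\ref{sec:relent} with $(\ol\rho,\ol n)=(\rho^*,n^*)$, estimate the error terms $\Xi_1,\Xi_2$ (with $\Xi_3=0$ since $R^*_1=0$), and close via Gronwall. The finiteness argument and the threshold condition for~\eqref{eq:convergence.dissipation} are correctly identified. Three points of execution, however, do not match the paper and would not yield the stated constants:

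\emph{(i) Origin of the factor $\max\{2,\gamma-1\}$.} No regime split $\rho\lessgtr 2\rho^*$ is used. The term $\xi_1$ (see~\eqref{eq:xi.profile}) contains both a kinetic piece $\ee^{-\tau}\rho|\,n/\rho-n^*/\rho^*|^2$ and a pressure piece $\pp(\rho\,\vert\,\rho^*)$. For the $\gamma$-law one has the exact identity $\pp(\rho\,\vert\,\ol\rho)=(\gamma-1)\,h(\rho\,\vert\,\ol\rho)$, so comparing with $\eta=\tfrac12\ee^{-\tau}\rho|\cdot|^2+h(\rho\,\vert\,\ol\rho)$ gives immediately $\xi_1\le\max\{2,\gamma-1\}\,[-(n^*/\rho^*)_y]_+\,\eta$; the ``2'' is the kinetic coefficient, the ``$\gamma-1$'' the pressure one (Lemma~\ref{le:xi.1.bound}). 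Your description of $\calI_1$ as involving only $h(\rho\,\vert\,\rho^*)$ drops the kinetic contribution.

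\emph{(ii) Treatment of $\xi_2$.} The paper does \emph{not} absorb any part of $\xi_2$ into the dissipation $\damp$. Instead (Lemma~\ref{le:xi.2.bound}) one writes $\rho=\sqrt\rho(\sqrt\rho-\sqrt{\rho^*})+\sqrt\rho\sqrt{\rho^*}$, applies Young with an $\ee^{-\tau/2}$-weighted split, and uses $(\sqrt\rho-\sqrt{\rho^*})^2\le k^{-1}(\rho^*)^{1-\gamma}h(\rho\,\vert\,\rho^*)$ to obtain
\[
|\xi_2|\le \mu\,\ee^{-\tau/2}\,\eta(\tau;\rho,n\,\vert\,\rho^*,n^*)+\ee^{-\tau/2}|R^*|.
\]
The dissipation $\damp$ is then simply discarded in the Gronwall step for~\eqref{eq:convergence}, and kept intact for~\eqref{eq:convergence.dissipation}. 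Your plan of absorbing into $\damp$ would change $\mu,K$ and leave less dissipation available for the second inequality.

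\emph{(iii) Forcing term.} The resulting differential inequality has right-hand side $K\ee^{-\tau/2}$, not $K$. With a constant forcing $K$, Gronwall would yield $\EE(\tau)\lesssim K/(\tfrac12-\theta)$ as $\tau\to\infty$, i.e.\ boundedness but not decay; the $\ee^{-\tau/2}$ is what produces the $K/\theta$ inside the bracket of~\eqref{eq:convergence} via $\int_0^\tau\ee^{-\theta\sigma}\dd\sigma\le1/\theta$. Your later remark that the tail is $2K\ee^{-\tau/2}=\int_\tau^\infty K\ee^{-\tau'/2}\dd\tau'$ shows you have the right object in mind; the displayed inequality should reflect it.
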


We emphasize that all constants are explicit and do not depend on the solution $(\rho,n)$
but only on the limit vales $\rho_\pm$, the pressure $\pp$ and the damping coefficient $\alpha$.
Since $\rho^*(y)=\rho_*(\sqrt{\alpha}y)$ and $n^*(y)=\frac{1}{\sqrt{\alpha}}n_*(\sqrt{\alpha}y)$
if $(\rho_*,n_*)$ is the solution to~\eqref{eq:profileEq} for $\alpha=1$,
we see that
$\mu$ and $K$ scale as $\mu\sim\frac{1}{\sqrt{\alpha}}$ and $K\sim\frac{1}{\alpha}$,
while $\theta$ is independent of $\alpha$. 
In particular, independently of $\alpha$, 
the condition $\theta<\frac{1}{2}$ can be satisfied
by choosing $|\rho_+-\rho_-|$ small enough,
due to~\eqref{eq:pme.convergence}.
Such flatness conditions for the profile function also appear in other articles, see~\cite{HsiLiu92CNDW,Nish96CRNDW,MieSch24CSSP}
for instance.
One can also generalize Theorem~\ref{thm:Conv} to more general pressure laws that grow like~\eqref{eq:pres.gamma} asymptotically.
But then the occurring constants would be less explicit.

While the assumption $\EE(0;\rho_0,n_0\,\vert\,\hat\rho,0)<\infty$
is natural for the initial data, 
we can also assume $\EE(0;\rho_0,n_0\,\vert\,\ol\rho,\ol n)<\infty$
for any other pair $(\ol\rho,\ol n)$ such that $\ol\rho-\hat\rho$ 
and $\ol n$ have sufficient decay as $|y|\to\infty$,
see Proposition~\ref{prop:relent.difference} below.
For instance, the choice $(\ol\rho,\ol n)=(\rho_\infty,0)$ with $\rho_\infty$ satisfying~\eqref{eq:rhoinfty},
or the asymptotic profile $(\ol\rho,\ol n)=(\rho^*,n^\ast)$ are possible. 
In particular, we do not impose any smallness conditions on the initial data,
and even vacuum is allowed to occur, at least locally,
for the initial data as well as during the evolution.

Focusing on the kinetic energy, 
from~\eqref{eq:convergence} we obtain the pointwise bound 
\[
\ee^{-\tau}\int_\R \rho(\tau) \,\Big\vert \frac {n(\tau)}{\rho(\tau)} - \frac{n^*}{\rho^*} \Big\vert^2 \dd y \leq C \ee^{-(\frac12{-}\theta)\tau}
\]
for some $C>0$ and a.a.~$\tau>0$,
which does not imply any convergence of $n$ towards $n^*$ as $\tau\to\infty$.
Although~\eqref{eq:convergence.dissipation} does not give a pointwise bound either, 
it yields convergence in a sense of integral means. 

In the case of coincident limits $\rho_+=\rho_-$,
we derive a faster convergence rate than in~Theorem~\ref{thm:Conv}.
Moreover, the damping term is not necessary,
and we can consider more general pressure functions $\pp$.
We can even treat the case $\rho_\pm=0$ if we additionally assume that the pressure satisfies
\begin{equation}
\label{eq:pres.better}
\exists\, c>0: \ \pp'(z)\leq c \,\frac{\pp(z)}{z} 
\quad \text{and} \quad
\int_0^1\frac{\pp(z)}{z^2}\dd z<\infty.
\end{equation} 
Observe that~\eqref{eq:pres.better} is satisfied if $\pp$ is given by~\eqref{eq:pres.gamma} for $\gamma>1$, but not for $\gamma=1$.

\begin{theorem}\label{thm:convergence.samelimits}
Let $\alpha\geq0$ and $\rho_+=\rho_-\equiv\hat\rho> 0$, 
and let $\pp\in C([0,\infty))\cap C^1(0,\infty)$ with $\pp'>0$ in $(0,\infty)$.
Consider initial data $(\rho_0,n_0)\in L^1_{\mathrm{loc}}(\R)^2$ 
such that $\rho_0\geq 0$
and $\EE(0;\rho_0,n_0\,\vert\,\hat\rho,0)<\infty$.
Let $(\rho,n)$ be a weak solution to~\eqref{eq:Euler.trans} 
satisfying~\ref{item:E1} or~\ref{item:E2}.
Then 
\begin{equation}\label{eq:convergence.samelimits.entropy}
    \EE (\tau;\rho(\tau),n(\tau) \, \vert \, \hat \rho,0) \leq \ee^{-\frac{\tau}{2}} \; \EE(0;\rho_0,n_0 \, \vert \, \hat \rho, 0)  \quad \text{ for a.a.~}\tau >0
\end{equation}
and
\begin{equation}\label{eq:convergence.samelimits.dissipation}
    \alpha\int_\tau^\infty\!\!\! \int_\R \frac {|n|^2}{\rho} \dd y\dd \tau'
   \leq \ee^{-\frac{\tau}{2}} \; \EE(0;\rho_0,n_0 \, \vert \, \hat \rho, 0)
   \quad \text{ for a.a.~}\tau >0.
\end{equation}
If $\pp$ additionally satisfies \eqref{eq:pres.better},
then
the same statement holds also for $\rho_+=\rho_-\equiv\hat\rho= 0$.
\end{theorem}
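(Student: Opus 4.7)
The plan is to specialize the relative entropy inequality to be derived in Section~\ref{sec:relent} to the constant reference state $(\ol\rho, \ol n) = (\hat\rho, 0)$, which in the present setting of coincident limits is constant in $y$ and, crucially, is an exact steady state of the scaled system~\eqref{eq:Euler.trans}. Two simplifications then occur. First, since $\hat\rho$ is constant, choosing $\rho_\infty \equiv \hat\rho$ under~\ref{item:E2} makes $h'(\rho_\infty)_y \equiv 0$, annihilating the right-hand side term of~\eqref{eq:weakEnt.total.rhoinf}. Second, the residual $R^*$ defined in~\eqref{eq:R.def} vanishes identically because $\ol n \equiv 0$, so under~\ref{item:E1} the general relative entropy inequality to be established in Theorem~\ref{thm:relentr.weak.E1} degenerates to an identity with no error term. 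In either case I would obtain
\begin{equation*}
    \frac{\d}{\d\tau}\EE(\tau;\rho,n\,\vert\,\hat\rho,0) + \frac12 \EE(\tau;\rho,n\,\vert\,\hat\rho,0) + \alpha \int_\R \frac{n^2}{\rho}\dd y \leq 0
\end{equation*}
in the sense of distributions on $(0,\infty)$, with initial value controlled by $\EE(0;\rho_0,n_0\,\vert\,\hat\rho,0)$.

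From this dissipative inequality the two claimed estimates follow readily. Passing to the pointwise (a.a.\ $\tau$) integrated formulation described after~\eqref{eq:weakEnt.total.rhoinf}, dropping the nonnegative damping integral, and applying Gronwall's lemma yields~\eqref{eq:convergence.samelimits.entropy}. For~\eqref{eq:convergence.samelimits.dissipation}, I would instead keep the dissipation term and integrate from $\tau$ to $\infty$, producing
\[
    \int_\tau^\infty \!\! \alpha \int_\R \frac{n^2}{\rho}\dd y \dd\tau'
    \leq \EE(\tau;\rho(\tau),n(\tau)\,\vert\,\hat\rho,0)
    \leq \ee^{-\tau/2}\, \EE(0;\rho_0,n_0\,\vert\,\hat\rho,0),
\]
where the first inequality uses that $\EE$ is nonnegative and the second is~\eqref{eq:convergence.samelimits.entropy}. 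The improper integral is finite precisely because the relative entropy itself decays.

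The delicate point is the vacuum case $\hat\rho = 0$. Here the relative entropy in~\eqref{eq:relentr.relentflux} becomes formally singular through the factor $\ol n/\ol\rho$ and through $h'(\ol\rho)$, so one cannot directly substitute $\ol\rho = 0$ into the arguments of Section~\ref{sec:relent}. The strategy I would take is to first establish the above inequality for reference $(\epsilon, 0)$ with $\epsilon > 0$, obtaining uniform-in-$\epsilon$ control, and then pass to the limit $\epsilon \to 0^+$. The assumption~\eqref{eq:pres.better} enters precisely here: the bound $\pp'(z) \leq c\pp(z)/z$ lets me dominate the pressure-derived terms in the relative entropy identity uniformly as $\epsilon\to 0$, while the integrability $\int_0^1 \pp(z)/z^2 \dd z <\infty$ guarantees that $h(\rho\,\vert\,0)$ itself is finite and equals the monotone limit of $h(\rho\,\vert\,\epsilon)$. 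This limit passage is the main technical obstacle; once it is carried out, the conclusions~\eqref{eq:convergence.samelimits.entropy} and~\eqref{eq:convergence.samelimits.dissipation} follow exactly as in the case $\hat\rho>0$.
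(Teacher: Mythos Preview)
For the case $\hat\rho>0$ your outline matches the paper's proof: one invokes the relative entropy inequality with the constant reference $(\hat\rho,0)$, so that all error terms $\xi_j$ vanish, then applies Gronwall for~\eqref{eq:convergence.samelimits.entropy} and integrates over $[\tau,\sigma]$ (letting $\sigma\to\infty$) for~\eqref{eq:convergence.samelimits.dissipation}. One small remark: under~\ref{item:E1} the paper appeals to Theorem~\ref{thm:relentr.weak.0} rather than Theorem~\ref{thm:relentr.weak.E1}, since the latter needs the $\gamma$-law whereas here only $\pp'>0$ is assumed.

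Your treatment of the vacuum case $\hat\rho=0$ has a genuine gap. The $\epsilon$-approximation cannot work at the level of the \emph{total} relative entropy: if $\rho_\pm=0$ and $\EE(0;\rho_0,n_0\,\vert\,0,0)<\infty$, then for any constant $\epsilon>0$ one has $\EE(\tau;\rho,n\,\vert\,\epsilon,0)=+\infty$, because the reference $(\epsilon,0)$ has the wrong behavior at spatial infinity (concretely, $h(\rho\,\vert\,\epsilon)\to h(0\,\vert\,\epsilon)=\pp(\epsilon)>0$ as $|y|\to\infty$, which is not integrable over $\R$). So there is no inequality with reference $(\epsilon,0)$ to start from, and ``uniform-in-$\epsilon$ control'' is vacuous. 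The paper avoids this entirely: Lemma~\ref{lem:pres.better} shows that under~\eqref{eq:pres.better} one has $h\in C^1([0,\infty))$ with $h'(0)=0$, so $\eta(\tau;\rho,n\,\vert\,0,0)$ and $q(\tau;\rho,n\,\vert\,0,0)$ are well defined directly, and Theorem~\ref{thm:relentr.weak.0} (under~\ref{item:E1}) and Lemma~\ref{le:relentr.E2.compact.supp} (under~\ref{item:E2}) are stated to include the case $(\ol\rho,\ol n)\equiv(0,0)$. The proof then proceeds identically to the case $\hat\rho>0$, with no limit passage needed.
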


In the case of coincident limits for $y\to\pm\infty$, where the self-similar profile is constant, 
the friction coefficient $\alpha$ can be set to zero
without affecting the convergence rate of the total relative entropy.
Only the faster convergence rate of the relative kinetic energy, 
coming from~\eqref{eq:convergence.samelimits.dissipation}, would be lost for $\alpha=0$.
In contrast, for different limits, we need $\alpha >0$ to define the self-similar profile by~\eqref{eq:profileEq}. Nevertheless, even in that case, the damping term can be omitted during the estimates without affecting the convergence 
rate of the relative entropy,
as will become evident during the proof.

Theorem~\ref{thm:convergence.samelimits} also covers the important case of
finite initial total mass. 
However, due to the parabolic scaling,
the mass corresponding to solutions to~\eqref{eq:Euler.trans} decays exponentially.
In particular, we obtain the zero profile $(\rho^*,n^*)\equiv(0,0)$ 
as limit objects in this case,
rather than the famous Barenblatt profile.

\begin{remark}\label{rem:convergence} 
Let us return to the original momentum $ m= \ee^{-\tau/2} n$
 and $m^*=\ee^{-\tau/2} n^*$.
Then Theorem~\ref{thm:Conv} (with $\theta <\frac12$) and Theorem~\ref{thm:convergence.samelimits} yield exponential 
convergence of the relative kinetic energy and the relative internal energy in the sense
\begin{equation*}
 \int_\R  \frac12 \rho(\tau) \, \Big| \frac{m(\tau)}{\rho(\tau)} - \frac{m^*}{\rho^*} \Big|^2 \dd y 
  + \int_\R h(\rho(\tau) \, \vert \, \rho^* ) \dd y 
   + \alpha\int_\tau^\infty\!\!\! \int_\R \ee^{\tau'}\!\rho \,\Big\vert \frac m\rho - \frac{m^*}{\rho^*} \Big\vert^2 \dd y\dd \tau'
  \leq C \ee^{-(\frac12 -\theta) \tau} 
\end{equation*}
for some $C>0$ and a.a.~$\tau>0$.
In terms of the original time variable $t = \ee^\tau-1$, this yields
the algebraic pointwise decay 
\begin{equation*}
 \int_\R \wt \eta \big(\wt \rho(t, y \sqrt{1{+}t}),\wt m (t, y \sqrt{1{+}t}) \, | \, \wt \rho_{\mathrm{se}}(t, y \sqrt{1{+}t}), \wt m_{\mathrm{se}} (t, y \sqrt{1{+}t}) \big) \dd y \leq C (1{+} t)^{-(\frac12 -\theta)},
\end{equation*}
where $\wt \rho_{\mathrm{se}}(t,x) = \rho^*(y)$ and $\wt m_{\mathrm{se}}(t,x) = m^*(y)$ denote the corresponding self-similar solution to~\eqref{eq:PME.Darcy.original}. 
Due to the strict convexity of the relative entropy $\wt \eta$, this results in the following pointwise convergences for a.a.~$y \in \R$:
\begin{equation*}
  \wt \rho(t, y \sqrt{1{+}t}) - \wt \rho_{\mathrm{se}}(t, y \sqrt{1{+}t}) \to 0 \quad \text{ and } \quad   \wt m(t, y \sqrt{1{+}t}) - \wt m_{\mathrm{se}}(t, y \sqrt{1{+}t}) \quad \text{ as } t \to \infty.
\end{equation*}
Note that in terms of the original spatial variable $x$, no corresponding statement can be made, as the transformation of the integral results in an additional factor $\ee^{-\tau/2}$.
We further obtain the estimate
\[
\alpha\int_t^\infty\!\!\! \int_\R \wt\rho(t,y\sqrt{1{+}t'}) \,\bigg\vert \frac {\wt m(t',y\sqrt{1{+}t'})}{\wt\rho(t',y\sqrt{1{+}t'})} - \frac{\wt m_{\mathrm{se}} (t', y \sqrt{1{+}t'})}{\wt \rho_{\mathrm{se}}(t', y \sqrt{1{+}t'})} \bigg\vert^2 \dd y\dd t' 
  \leq C (1{+} t)^{-(\frac12 -\theta)},
\]
which provides an additional rate for the convergence of the time-averaged kinetic energy
as $t\to\infty$.
\end{remark}

\section{Relative entropy inequality}
\label{sec:relent}

In this section, we consider weak solutions $(\rho,n)$ 
satisfying~\ref{item:E1} or~\ref{item:E2} and derive an inequality for the relative entropy
$\EE(\tau;\rho,n \, \vert \, \ol\rho,\ol n)$
defined in~\eqref{eq:relentr.relentflux},
where $(\ol\rho,\ol n)$ is a suitable reference pair.
To this end, we define the residuals
\begin{equation}
\begin{aligned}
    \ol R_1
    &:=\ol\rho_\tau -\frac{y}{2}\ol\rho_y+\ol n_y,
    \\
    \ol R_2
    &:=\ol n_\tau - \frac{y}{2}\ol n_y - \frac12 \ol n + \Big( \frac{\ol n^2}{\ol\rho} \Big)_y 
    + \ee^\tau \big( \pp(\ol\rho)_y + \alpha \ol n\big),
\end{aligned}
\label{eq:residuals}
\end{equation}
and the error term
$\xi := \xi_1+\xi_2+\xi_3$ with
    \begin{equation}
    \begin{aligned}
        \xi_1 &:=  - \Big( \frac{\ol n}{\ol\rho}\Big)_y\big( \ee^{-\tau} \rho \Big\vert \frac n\rho -\frac{\ol n}{\ol\rho} \Big\vert^2 + \pp (\rho \, \vert \, \ol \rho) \big) , \\
        \xi_2 &:=   \ee^{-\tau }  \ol R  \; \frac{\rho}{\ol\rho} \Big(\frac n\rho -\frac{\ol n}{\ol\rho}\Big), \\
        \xi_3 &:= -(\rho-\ol\rho) h''(\ol\rho) \ol R_1.
    \end{aligned}
    \label{eq:xi.def}
    \end{equation}
Here, $\pp (\rho \, \vert \, \ol \rho) := \pp(\rho) - \pp(\ol \rho) - \pp'(\ol \rho) (\rho {-} \ol \rho)$ is the relative pressure and $\ol R:= \frac{\ol n}{\ol\rho} \ol R_1- \ol R_2$.
We further set
\begin{equation}
  \label{eq:frict.diss}
  \damp(\rho,n\,\vert\,\ol\rho,\ol n) := \alpha \int_\R \rho \,\Big\vert \frac n\rho - \frac{\ol n}{\ol\rho} \Big\vert^2 \dd y
  \quad\text{and}\quad
  \Xi = \sum_{j=1}^3 \Xi_j
\end{equation}
with $\Xi_j:=\int_\R\xi_j\dd y$,  $j=1,2,3$.
We shall show the relative entropy inequality
\begin{align}
   \begin{aligned}
   \label{eq:relent.total.weak}
   \forall\, \varphi\in C^1_c([0,\infty);[0,\infty)):&
   \\
    \int_0^\infty -\varphi'\, \EE(\tau;\rho,n \, \vert \, \ol\rho,\ol n)  
    &+  \varphi \Big(\frac12\EE(\tau;\rho,n \, \vert \, \ol\rho,\ol n) 
    + \damp(\rho,n\,\vert\,\ol\rho,\ol n) \Big)\dd \tau
    \\
     &\quad
     \leq \varphi(0)\,\EE(0;\rho_0,n_0 \, \vert \, \ol\rho(0),\ol n(0))
     +\int_0^\infty \varphi \, \Xi \dd\tau.
   \end{aligned}  
\end{align}
Before we give separate proofs under the assumptions~\ref{item:E1} and~\ref{item:E2},
we collect some results useful for the derivation of~\eqref{eq:relent.total.weak}.

\subsection{Preparatory results}
\label{sec:preparatory}

In what follows, we prove several lemmata.
In order to treat the case of vacuum for the reference density, 
we first derive additional properties on $\pp$ and $h$ 
if the pressure satisfies~\eqref{eq:pres.better}.

\begin{lemma}\label{lem:pres.better}
Let $\pp\in C([0,\infty))\cap C^1(0,\infty)$ with $\pp'>0$ in $(0,\infty)$,
and such that~\eqref{eq:pres.better} holds.
Then $h\in C^1([0,\infty))$ with $h'(0)=0$, 
and $
\rho h'(\rho)\leq c h(\rho\,\vert\,0)$
with $c$ as in~\eqref{eq:pres.better}.
\end{lemma}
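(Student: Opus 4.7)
The plan is to unpack the two conditions in \eqref{eq:pres.better} and use the defining relations $h''(z)=\pp'(z)/z$ and $\pp(z)=zh'(z)-h(z)$ to write $h$ explicitly in terms of $\pp$, and then prove the two claims in turn.

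First I would observe that the integrability condition $\int_0^1 \pp(z)/z^2\dd z<\infty$ together with monotonicity of $\pp$ (from $\pp'>0$) forces $\pp(0)=0$: if $\pp(0)>0$, continuity of $\pp$ would give $\pp(z)/z^2\geq \pp(0)/(2z^2)$ on a neighbourhood of $0$, which is not integrable. With this I can then choose the antiderivatives of $h''$ and $h'$ so that
\[
h'(z)=\int_0^z \frac{\pp'(s)}{s}\dd s,\qquad h(z)=\int_0^z h'(t)\dd t.
\]
The pointwise bound $\pp'(s)\leq c\pp(s)/s$ combined with $\int_0^1\pp(s)/s^2\dd s<\infty$ and the local boundedness of $\pp'$ on $[1,z]$ ensures that the integral for $h'(z)$ is finite and continuous, with $h'(0)=0$; this gives $h\in C^1([0,\infty))$. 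The relation $\pp(z)=zh'(z)-h(z)$ is obtained (and $h(0)=0$ pinned down) by checking that the two sides have the same derivative $zh''(z)=\pp'(z)$ and the same value at $z=0$.

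For the inequality $\rho h'(\rho)\leq c\,h(\rho\,\vert\,0)=c\,h(\rho)$, I would first rewrite it using $\rho h'(\rho)=h(\rho)+\pp(\rho)$ as the equivalent statement $\pp(\rho)\leq(c-1)h(\rho)$. The key monotonicity to exploit is that $\pp'(z)\leq c\pp(z)/z$ is the statement that $z\mapsto \pp(z)/z^c$ is non-increasing on $(0,\infty)$, which yields the lower bound $\pp(s)\geq \pp(z)(s/z)^c$ for $0<s\leq z$. An integration by parts (using $\pp(s)/s\to 0$ as $s\to 0^+$, which follows from $\pp(z)=O(z^c)$ with $c>1$, the latter being forced by the integrability condition again) gives
\[
h(z)=zh'(z)-\pp(z)=z\int_0^z \frac{\pp(s)}{s^2}\dd s.
\]
Plugging in the lower bound for $\pp(s)$ and computing $\int_0^z s^{c-2}\dd s=z^{c-1}/(c-1)$ yields $h(z)\geq \pp(z)/(c-1)$, which is exactly the required inequality.

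The only delicate point is the boundary behavior at $s=0$: one needs to justify $\pp(s)/s\to 0$ to make the integration by parts legitimate, and one needs $c>1$ (which is not assumed a priori but is forced by $\int_0^1 \pp(s)/s^2\dd s<\infty$ together with $\pp(s)/s^c$ non-increasing). Both are straightforward once one writes $\pp(z)\leq \pp(z_0)(z/z_0)^c$ on $(0,z_0]$, so I do not expect a real obstacle; the calculation is short and the bound $\pp/z^c$ non-increasing is the only non-trivial input.
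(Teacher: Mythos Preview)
Your proof is correct. For the first claim ($h\in C^1$ with $h'(0)=0$), your argument and the paper's coincide in spirit: both use the integrability condition in \eqref{eq:pres.better} to pin down the constants of integration so that $h(\rho)=\rho\int_0^\rho \pp(z)/z^2\dd z$ and $h'(\rho)=\int_0^\rho \pp'(z)/z\dd z$.

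For the inequality $\rho h'(\rho)\leq c\,h(\rho\,\vert\,0)$, however, the paper takes a much shorter route. Since you already have the two formulas above, applying the pointwise bound $\pp'(z)/z\leq c\,\pp(z)/z^2$ directly under the integral sign gives
\[
\rho h'(\rho)=\rho\int_0^\rho \frac{\pp'(z)}{z}\dd z\leq c\,\rho\int_0^\rho \frac{\pp(z)}{z^2}\dd z = c\,h(\rho\,\vert\,0)
\]
in one line. Your detour through the equivalent form $\pp(\rho)\leq(c-1)h(\rho)$ and the monotonicity of $z\mapsto \pp(z)/z^c$ is correct and yields the same conclusion, but it requires the auxiliary observations that $c>1$ and $\pp(s)/s\to 0$ as $s\to0^+$ (both of which you justify properly). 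The paper's direct approach bypasses these side computations entirely; on the other hand, your route extracts the sharper structural fact that $\pp(z)/z^c$ is non-increasing, which could be useful elsewhere.
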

\begin{proof}
    We can compute $h$ as a solution to the ordinary differential equation in~\eqref{eq:h.properties}, which gives the representation
    \[
    h(\rho)=\rho\int_{\rho_0}^\rho\frac{p(z)}{z^2}\dd z
    \]
    for any $\rho_0>0$. 
    Due to~\eqref{eq:pres.better}, we can send $\rho\to 0$,
    which yields 
    $h\in C^1([0,\infty))$ with $h'(0)=0$.
    By~\eqref{eq:pres.better} and~\eqref{eq:h.properties}, we can further estimate
    \[
    \rho h'(\rho)
    =\rho\int_{0}^\rho h''(z)\dd z
    = \rho\int_{0}^\rho\frac{p'(z)}{z}\dd z
    \leq c\rho\int_{0}^\rho\frac{p(z)}{z^2}\dd z
    =c\,h(\rho\,\vert\,0)
    \]
    which completes the proof.    
\end{proof}

The next lemma gives control 
of the relative entropy flux by the relative entropy, both defined in~\eqref{eq:relentr.relentflux}.

\begin{lemma}\label{lem:entropyflux.control}
Let $\pp\in C([0,\infty))\cap C^1(0,\infty)$ with $\pp'>0$ in $(0,\infty)$. 
Let $0 < \delta < M$. 
Then there exists $C>0$ such that 
\begin{equation}\label{eq:entropyflux.control}
|q(\tau;\rho_1,n_1\,\vert\,\rho_2,n_2)|
\leq C\Big(\frac{|n_1|}{\rho_1}+\frac{|n_2|}{\rho_2}+\ee^{\tau/2}\Big)\eta(\tau;\rho_1,n_1\,\vert\,\rho_2,n_2)
\end{equation}
for all $\tau\geq 0$, 
all $(\rho_1,n_1)\in\{(0,0)\}\cup (0,M]\times\R$
and all $(\rho_2,n_2)\in [\delta,M]\times\R$. 
If $\pp$ additionally satisfies~\eqref{eq:pres.better},
the statement
also holds for $(\rho_2,n_2)= (0,0)$.
\end{lemma}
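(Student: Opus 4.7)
The plan is to decompose $q(\tau;\rho_1,n_1\,\vert\,\rho_2,n_2)$ into its three summands from~\eqref{eq:relentr.relentflux}, write $u:=\frac{n_1}{\rho_1}-\frac{n_2}{\rho_2}$, and control each against $\eta:=\eta(\tau;\rho_1,n_1\,\vert\,\rho_2,n_2)$ with the prefactor appearing in~\eqref{eq:entropyflux.control}. The two outer terms are immediate: the kinetic flux $\tfrac12\ee^{-\tau}n_1|u|^2$ factors as $\tfrac{n_1}{\rho_1}\cdot \tfrac12\ee^{-\tau}\rho_1|u|^2$, hence is bounded by $\tfrac{|n_1|}{\rho_1}\eta$, and the term $\tfrac{n_2}{\rho_2}\,h(\rho_1\,\vert\,\rho_2)$ is bounded by $\tfrac{|n_2|}{\rho_2}\eta$.

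For the cross term $\rho_1\big(h'(\rho_1)-h'(\rho_2)\big)u$, the strategy is a weighted Young inequality with $\mu=\ee^{\tau/2}$ applied to the factorisation $\sqrt{\rho_1}(h'(\rho_1){-}h'(\rho_2))\cdot \sqrt{\rho_1}u$:
\[
\big|\rho_1(h'(\rho_1)-h'(\rho_2))u\big|
\leq \tfrac12\ee^{-\tau/2}\rho_1|u|^2 + \tfrac12\ee^{\tau/2}\rho_1(h'(\rho_1)-h'(\rho_2))^2.
\]
The first summand equals $\ee^{\tau/2}$ times the relative kinetic energy and is already of the desired form. Closing the estimate thus reduces to the pointwise inequality
\[
\rho_1\big(h'(\rho_1)-h'(\rho_2)\big)^2\leq C\,h(\rho_1\,\vert\,\rho_2)\qquad\text{for }\rho_1\in[0,M],\ \rho_2\in[\delta,M],
\]
with $C=C(\delta,M,\pp)$.

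I would prove this pointwise bound by splitting on $\rho_1$. When $\rho_1\geq\delta/2$, both densities lie in the compact set $[\delta/2,M]$ on which $h$ is smooth with $h''$ continuous and strictly positive, so the mean value theorem for $h'$ together with the remainder formula $h(\rho_1\,\vert\,\rho_2)=\int_{\rho_2}^{\rho_1}h''(s)(\rho_1-s)\dd s$ makes both sides comparable to $(\rho_1-\rho_2)^2$. When $0\leq\rho_1<\delta/2$, the separation $|\rho_1-\rho_2|\geq\delta/2$ forces a uniform positive lower bound on $h(\rho_1\,\vert\,\rho_2)$ via the integral representation restricted to a fixed subinterval of $[\delta/2,\delta]$, while the left-hand side is controlled by a constant through $\rho_1(h'(\rho_1)-h'(\rho_2))^2\leq 2\rho_1 h'(\rho_1)^2+2\rho_1 h'(\rho_2)^2$, using boundedness of $\sqrt{\rho}\,h'(\rho)$ on $(0,M]$ and of $h'$ on $[\delta,M]$.

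The vacuum extension under~\eqref{eq:pres.better} follows the same scheme after invoking Lemma~\ref{lem:pres.better}, which provides $h\in C^1([0,\infty))$ with $h'(0)=0$ and $\rho h'(\rho)\leq c\,h(\rho\,\vert\,0)$. The flux reduces to $\tfrac12\ee^{-\tau}n_1^3/\rho_1^2+n_1 h'(\rho_1)$; the first summand factors as $\tfrac{n_1}{\rho_1}\cdot \tfrac12\ee^{-\tau}n_1^2/\rho_1$ and is bounded as before, while the second is handled by Young's inequality with weight $\ee^{\tau/2}$, after which $\rho_1 h'(\rho_1)^2=h'(\rho_1)\cdot\rho_1 h'(\rho_1)\leq\big(\max_{[0,M]}|h'|\big)\cdot c\,h(\rho_1\,\vert\,0)$ closes the argument. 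I expect the principal obstacle to be the pointwise comparison in the degenerate regime $\rho_1\to 0$ with $\rho_2\in[\delta,M]$: Taylor expansion is unavailable there, and one must combine the density-separation lower bound on $h(\rho_1\,\vert\,\rho_2)$ with a structural control on $\sqrt{\rho_1}\,h'(\rho_1)$, which is exactly the source of the asymmetry $\rho_1\in[0,M]$, $\rho_2\in[\delta,M]$ in the hypothesis.
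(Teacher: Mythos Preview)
Your decomposition and the treatment of the kinetic flux, the term $\frac{n_2}{\rho_2}h(\rho_1\,\vert\,\rho_2)$, the regime $\rho_1\geq\delta/2$, and the vacuum case are all fine and close to the paper's argument. The gap is in the regime $0<\rho_1<\delta/2$: your claimed pointwise bound
\[
\rho_1\big(h'(\rho_1)-h'(\rho_2)\big)^2\leq C\,h(\rho_1\,\vert\,\rho_2)
\]
rests on ``boundedness of $\sqrt{\rho}\,h'(\rho)$ on $(0,M]$'', which is \emph{not} implied by the hypotheses $\pp\in C([0,\infty))\cap C^1(0,\infty)$, $\pp'>0$. For instance, $\pp(z)=z^\beta$ with $0<\beta<\tfrac12$ gives $h'(z)\sim z^{\beta-1}$ and hence $\sqrt{z}\,h'(z)\sim z^{\beta-1/2}\to\infty$ as $z\to 0$, while $h(\rho_1\,\vert\,\rho_2)$ stays bounded for $\rho_2\in[\delta,M]$. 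Thus your Young-inequality route, which squares the factor $h'(\rho_1)-h'(\rho_2)$, cannot close in general.

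The paper avoids this by never squaring. In the small-$\rho_1$ regime it uses the algebraic identity (from $\pp=zh'-h$)
\[
\rho_1\big(h'(\rho_1)-h'(\rho_2)\big)=h(\rho_1\,\vert\,\rho_2)+\pp(\rho_1)-\pp(\rho_2),
\]
whose right-hand side involves only the continuous function $\pp$ and the relative energy, hence is uniformly bounded; combined with the lower bound $h(\rho_1\,\vert\,\rho_2)\geq c_0>0$ (which you also observe), this yields $\rho_1|h'(\rho_1)-h'(\rho_2)|\leq c\,h(\rho_1\,\vert\,\rho_2)$. The cross term is then closed by the triangle inequality $|u|\leq \frac{|n_1|}{\rho_1}+\frac{|n_2|}{\rho_2}$, producing the velocity prefactor rather than $\ee^{\tau/2}$ in this regime. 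Your Young step is only used (and only valid) when $\rho_1\geq\delta/2$.
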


\begin{proof}
    First consider $(\rho_1,n_1)=(0,0)$ and $(\rho_2,n_2)\in [\delta,M]\times\R$.
    Then we have
    \[
    |q(\tau;0,0\,\vert\,\rho_2,n_2)|
    =\frac{|n_2|}{\rho_2}\big|-h(\rho_2)+h'(\rho_2)\rho_2\big|
    =\frac{|n_2|}{\rho_2}\eta(\tau;0,0\,\vert\,\rho_2,n_2),
    \]
    which shows~\eqref{eq:entropyflux.control} in this case.
    
    Now let $\rho_1>0$ and $\rho_2\in(\delta,M)$.
    We obtain
    \[
    \begin{aligned}
    |q(\tau;\rho_1,n_1\,\vert\,\rho_2,n_2)|
    & \leq 
    \max\Big\{\frac{|n_1|}{\rho_1},\frac{|n_2|}{\rho_2}\Big\}\eta(\tau;\rho_1,n_1\,\vert\,\rho_2,n_2)
    + \rho_1 \big|  h'(\rho_1) {-}  h'(\rho_2) \big|  \Big\vert \frac{n_1}{\rho_1} -\frac{n_2}{\rho_2} \Big\vert.
    \end{aligned}
    \]
To estimate the latter term,
consider first the case $0 <\rho_1 < \delta/2$.
We use~\eqref{eq:h.properties} to deduce 
$\rho_1 \big(  h'(\rho_1) -  h'(\rho_2) \big) = h(\rho_1\,\vert\,\rho_2) + \pp(\rho_1)-\pp(\rho_2)$.
First, we have $\big| \pp(\rho_1)-\pp(\rho_2) \big| \leq  \pp(M) - \pp(\delta/2)$ by monotonicity of $\pp$. 
Second, we use Taylor's theorem and $h''(z) >0$ for $z>0$ to conclude
    \[
    h(\rho_1\,\vert\,\rho_2)
    = - \int_{\rho_1}^{\rho_2}(\rho_1{-}z)h''(z)\dd z 
    \geq   \int_{\delta/2}^{\delta} \big(z-\frac\delta2 \big) h''(z)\dd z \geq \frac{\delta^2}{8} \min_{z\in[\delta/2,\delta]} h''(z).
    \]
In sum, this gives
\[
\rho_1 \big| h'(\rho_1) {-}  h'(\rho_2) \big|  \Big| \frac{n_1}{\rho_1} -\frac{n_2}{\rho_2} \Big|
\leq  c \Big( \frac{|n_1|}{\rho_1} + \frac{|n_2|}{\rho_2}\Big) h(\rho_1\,\vert\,\rho_2)
\]
for some $c >0$ in this case.
    If $\rho_1\geq\delta/2$, we instead estimate
    $|h'(\rho_1) -  h'(\rho_2)|
    \leq\max_{z\leq M}h''(z) |\rho_1-\rho_2|$
    and
    $h(\rho_1\,\vert\,\rho_2)
    \geq \frac{1}{2}\min_{z\in[\delta/2,M]}h''(z)\,(\rho_2-\rho_1)^2$,
    which together with Young's inequality implies
    \[
    \begin{aligned}
    \rho_1 \big|  h'(\rho_1) -  h'(\rho_2) \big|  \Big\vert \frac{n_1}{\rho_1} -\frac{n_2}{\rho_2} \Big\vert
    &\leq\ee^{\tau/2} \,\frac{1}{2}\ee^{-\tau}\rho_1\Big\vert \frac{n_1}{\rho_1} -\frac{n_2}{\rho_2} \Big\vert^2
    + \frac{1}{2} \ee^{\tau/2}\rho_1 \,\big|h'(\rho_1) -  h'(\rho_2)\big|^2
    \\
    &\leq c\,\ee^{\tau/2} \eta(\tau;\rho_1,n_1\,\vert\,\rho_2,n_2)
    \end{aligned}
    \]
    for some $c>0$ since $\min_{z\in[\delta/2,M]} h''(z)> 0$.
    Collecting all estimates, we arrive at~\eqref{eq:entropyflux.control}.

    Finally,
    assume~\eqref{eq:pres.better} and
    let $(\rho_2,n_2)=(0,0)$ 
    and $(\rho_1,n_1)\in\{(0,0)\}\cup (0,M]\times\R$.
    Then $q(\tau;\rho_1,n_1\,\vert\,0,0)$ is well defined 
    by Lemma~\ref{lem:pres.better}, and we obtain
    \[
    |q(\tau;\rho_1,n_1\,\vert\,0,0)|
    = \frac{|n_1|}{\rho_1}\big|\ee^{-\tau}\frac{n_1^2}{2\rho_1}+\rho_1 \big(h'(\rho_1)-h'(0)\big)\big|
    \leq (1+c)\frac{|n_1|}{\rho_1}\eta(\tau;\rho_1,n_1\,\vert\,0,0)
    \]
    for $c>0$ as in~\eqref{eq:pres.better}, which yields~\eqref{eq:entropyflux.control} in this case.
\end{proof}

The following auxiliary results focus on the case of a barotropic pressure law of
the form~\eqref{eq:pres.gamma}. To begin with, we
introduce the family of functions
\begin{align}
\label{eq:Fp}
F_p(z) := {
   \begin{cases} 
	\frac{1}{p (p{-}1)} \big( z^p - p z+ p -1\big) 
		&\text{for } p \in \mathbb{R} \setminus \{0,1 \},\\
	z \log z -z+1 &\text{for } p=1, \\
	z-\log z-1  &\text{for } p =0,
   \end{cases}}
\end{align}
which are determined by the conditions $F_p''(z) = z^{p{-}2}$ and $F_p(1) = F_p'(1)=0$. 
These are often used as entropy functions 
for reaction-diffusion systems (see e.g.\ \cite{MieMit18CEER, MieSch24CSSP}),
and they satisfy the lower bound
\begin{equation}
\label{eq:Fp.bound}
F_p(z) \geq \frac{1/2} {\max\{p,1{-}p\}}\, F_{1/2}(z) = \frac{1}{\max\{p,1{-}p\}} \, (\sqrt{z}-1)^2 
\end{equation}
for all $p>0$ and $z>0$,
see \cite[eqn.\,(3.2)]{MieMit18CEER}.
If the potential energy $h$ is of the form \eqref{eq:h_gammaLaw}, then we have
\begin{equation}
    \label{eq:relH.Fp}
 h(\rho \, \vert \, \ol\rho) = \gamma k F_\gamma(\rho / \ol\rho) {\ol\rho}^\gamma \quad \text{ for all } \gamma \geq 1,
\end{equation}
so that
\begin{equation}
    \label{eq:relH.est.sqrt}
 h(\rho \, \vert \, \ol\rho) \geq k  {\ol\rho}^{\gamma-1} \big(\sqrt{\rho}-\sqrt{\ol\rho}\big)^2 
\end{equation}
by~\eqref{eq:Fp.bound}.

In the following auxiliary result, we provide conditions that allow to exchange arguments in the relative entropy.
We state the result in a time-independent and a time-dependent version.

\begin{proposition}
    \label{prop:relent.difference}
    Let $\pp$ be given as in~\eqref{eq:pres.gamma}.
    \begin{enumerate}[label=\roman*.]
    \item 
    Let $\tau\in[0,\infty)$, and let $(\rho_1,n_1),(\rho_2,n_2)\in L^1_\mathrm{loc}(\R)^2$.
    Assume that 
    $\rho_1,\frac{1}{\rho_1},\frac{n_1}{\rho_1}\in L^\infty(\R)$ and
    \[
    \Big(\frac{n_1}{\rho_1}-\frac{n_2}{\rho_2}\Big),
    \
    \Big(\frac{n_1^2}{\rho_1^2}-\frac{n_2^2}{\rho_2^2}\Big),
    \ h'(\rho_1)-h'(\rho_2)
    \, \in\,  L^2(\R)\cap L^\infty(\R).
    \]
    If $(\rho,n)\in L^1_\mathrm{loc}(\R)^2$
    such that
    $\EE(\tau;\rho,n\,\vert\,\rho_1,n_1)<\infty$,
    then
    $\EE(\tau;\rho,n\,\vert\,\rho_2,n_2)<\infty$
    if and only if $\EE(\tau;\rho_1,n_1\,\vert\,\rho_2,n_2)<\infty$.
    \item 
    Let $I\subset [0,\infty)$ be an interval,
    and let $(\rho_1,n_1),(\rho_2,n_2)\in L^1_\mathrm{loc}(I\times\R)^2$.
    Assume that 
    $\rho_1,\frac{1}{\rho_1},\ee^{-\tau/2}\frac{n_1}{\rho_1}\in L^\infty(I\times\R)$ and
    \[
    \ee^{-\tau/2}\Big(\frac{n_1}{\rho_1}-\frac{n_2}{\rho_2}\Big),
    \
    \ee^{-\tau}\Big(\frac{n_1^2}{\rho_1^2}-\frac{n_2^2}{\rho_2^2}\Big),
    \  h'(\rho_1)-h'(\rho_2)
    \, \in\,  L^2(I\times\R)\cap L^\infty(I\times\R).
    \]
    If $(\rho,n)\in L^1_\mathrm{loc}(I\times\R)^2$ such that 
    $\EE(\tau;\rho,n\,\vert\,\rho_1,n_1)\in L^1(I)$,
    then
    $\EE(\tau;\rho,n\,\vert\,\rho_2,n_2)\in L^1(I)$
    if and only if $\EE(\tau;\rho_1,n_1\,\vert\,\rho_2,n_2)\in L^1(I)$.
    \end{enumerate}
\end{proposition}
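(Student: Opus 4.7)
The plan is to reduce everything to a single exact pointwise identity: I will show that
\[
\eta(\tau;\rho,n\,\vert\,\rho_2,n_2) = \eta(\tau;\rho,n\,\vert\,\rho_1,n_1) + \eta(\tau;\rho_1,n_1\,\vert\,\rho_2,n_2) + C,
\]
where the remainder $C$ is explicit in terms of $\rho-\rho_1$ and the given differences of reference quantities, and then argue that $|C|\in L^1(\R)$ (respectively $L^1(I\times\R)$ in part ii). Since every relative entropy is pointwise non-negative, integrability of $C$ immediately yields the desired equivalence of finiteness by additivity. Writing $V:=n/\rho$, $V_i:=n_i/\rho_i$, the expansion $|V-V_2|^2=|V-V_1|^2+2(V-V_1)(V_1-V_2)+|V_1-V_2|^2$ takes care of the kinetic part, while a direct computation from the definition of the relative internal energy yields the identity
\[
h(\rho\,\vert\,\rho_2) - h(\rho\,\vert\,\rho_1) - h(\rho_1\,\vert\,\rho_2) = (\rho-\rho_1)\bigl(h'(\rho_1)-h'(\rho_2)\bigr).
\]
Combining these, I read off
\[
C = \ee^{-\tau}\rho(V-V_1)(V_1-V_2) + \tfrac12\ee^{-\tau}(\rho-\rho_1)|V_1-V_2|^2 + (\rho-\rho_1)\bigl(h'(\rho_1)-h'(\rho_2)\bigr).
\]

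For the first summand, Young's inequality gives $|\ee^{-\tau}\rho(V-V_1)(V_1-V_2)|\leq \tfrac12\ee^{-\tau}\rho|V-V_1|^2+\tfrac12\ee^{-\tau}\rho|V_1-V_2|^2$, whose first term is bounded by $\eta(\tau;\rho,n\,\vert\,\rho_1,n_1)$ (integrable by hypothesis) and whose second term has the same form as the next summand of $C$. Hence the task reduces to controlling expressions of the type $\int(|\rho-\rho_1|+\rho)\,f\dd y$ with $f$ equal to $\ee^{-\tau}|V_1-V_2|^2$ or $|h'(\rho_1)-h'(\rho_2)|$; in both cases $f\in L^2(\R)\cap L^\infty(\R)$ by the hypotheses.

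The heart of the argument, and the step I expect to be the main obstacle, is that the hypotheses provide no global bound on $\rho$: the density may exhibit vacuum and arbitrarily large values simultaneously, so no a priori global $L^p$-control on $\rho$ or $\rho-\rho_1$ is available. The key is to extract the needed integrability from the assumed finiteness of $\EE(\tau;\rho,n\,\vert\,\rho_1,n_1)$ by a dichotomy. Setting $A:=\{\rho>4\|\rho_1\|_{L^\infty}\}$, on $A$ one has $\sqrt\rho-\sqrt{\rho_1}\geq\tfrac12\sqrt\rho$, so by the $\gamma$-law lower bound~\eqref{eq:relH.est.sqrt} we obtain $h(\rho\,\vert\,\rho_1)\geq c\rho$ with $c>0$, hence $\rho\in L^1(A)$ and $|\rho-\rho_1|\leq 2\rho\in L^1(A)$; paired with $\|f\|_{L^\infty}$, the cross terms are integrable on $A$. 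On $A^c$, $\rho$ is uniformly bounded so $\sqrt\rho+\sqrt{\rho_1}\in L^\infty(A^c)$ and $|\rho-\rho_1|\leq C|\sqrt\rho-\sqrt{\rho_1}|\in L^2(A^c)$ by~\eqref{eq:relH.est.sqrt}, and Cauchy--Schwarz with $\|f\|_{L^2}$ closes the bound on $A^c$.

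The time-dependent version of part ii then follows by the same argument carried out on $I\times\R$: the weights $\ee^{-\tau/2}$ built into the hypotheses precisely compensate the $\ee^{-\tau}$-factor appearing in the kinetic parts of $C$ (because $\ee^{-\tau/2}(V_1-V_2)\in L^2\cap L^\infty$ gives $\ee^{-\tau}|V_1-V_2|^2\in L^1\cap L^\infty$), the splitting is performed on $I\times\R$ with $A:=\{\rho>4\|\rho_1\|_{L^\infty(I\times\R)}\}$, and $\EE(\tau;\rho,n\,\vert\,\rho_1,n_1)\in L^1(I)$ delivers $\rho\in L^1(I\times A)$ via the same lower bound~\eqref{eq:relH.est.sqrt}. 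No new analytical difficulty arises; only a careful bookkeeping of the $\tau$-weights is required.
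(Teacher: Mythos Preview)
Your proof is correct and follows essentially the same strategy as the paper: establish the exact pointwise identity relating the three relative entropies and show the remainder is integrable using the lower bound~\eqref{eq:relH.est.sqrt}. The only differences are cosmetic. The paper writes the remainder in terms of $n-n_1$ and $\rho-\rho_1$ (which is algebraically equivalent to your form via $\rho(V-V_1)=n-n_1-V_1(\rho-\rho_1)$), and instead of your dichotomy on $A=\{\rho>4\|\rho_1\|_{L^\infty}\}$ it uses the single algebraic identity $\rho-\rho_1=(\sqrt\rho-\sqrt{\rho_1})^2+2\sqrt{\rho_1}(\sqrt\rho-\sqrt{\rho_1})$ to place $\rho-\rho_1$ directly into $L^1+L^2$; similarly it decomposes $\ee^{-\tau/2}(n-n_1)$ explicitly rather than invoking Young's inequality first. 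One small imprecision in your write-up: you say ``the task reduces to controlling $\int(|\rho-\rho_1|+\rho)\,f$'' for both choices of $f$, but the term $\int\rho\,f$ actually only arises with $f=\ee^{-\tau}|V_1-V_2|^2$, which lies in $L^1\cap L^\infty$ (so that on $A^c$ you use $\rho$ bounded and $f\in L^1$, not Cauchy--Schwarz). This does not affect the validity of the argument.
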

\begin{proof}
    A straightforward calculation shows
    \begin{equation}
    \label{eq:eta.difference}
    \begin{aligned}
    &\eta(\tau;\rho,n\,\vert\,\rho_1,n_1)
    +\eta(\tau;\rho_1,n_1\,\vert\,\rho_2,n_2) 
    -\eta(\tau;\rho,n\,\vert\,\rho_2,n_2)
    \\
    &\quad
    =-\ee^{-\tau}\Big(\frac{n_1}{\rho_1}-\frac{n_2}{\rho_2}\Big)(n-n_1)
    -\bigg(\frac{1}{2}\ee^{-\tau}\Big(\frac{n_1^2}{\rho_1^2}-\frac{n_2^2}{\rho_2^2}\Big)+h'(\rho_1)-h'(\rho_2)\bigg)(\rho-\rho_1).
    \end{aligned}
    \end{equation}
    The claim follows if the right-hand side is in $L^1(Q)$,
    where $Q=\R$ or $Q=I\times\R$, 
    depending on the case.
    Inequality~\eqref{eq:relH.est.sqrt} implies
    \[
    \rho-\rho_1=(\sqrt{\rho}-\sqrt{\rho_1})^2
    + 2\sqrt{\rho_1}(\sqrt{\rho}-\sqrt{\rho_1})
    \in L^1(Q)+L^2(Q),
    \]
    and similarly,
    the decomposition
    \[
    \ee^{-\tau/2}(n-n_1) 
    = (\sqrt{\rho}-\sqrt{\rho_1}) \ee^{-\tau/2}\sqrt{\rho}\Big(\frac{n}{\rho}-\frac{n_1}{\rho_1}\Big)
    +\sqrt{\rho_1}\ee^{-\tau/2}\sqrt{\rho}\Big(\frac{n}{\rho}-\frac{n_1}{\rho_1}\Big)
    +\ee^{-\tau/2}\frac{n_1}{\rho_1}(\rho-\rho_1)
    \]
    yields $\ee^{-\tau/2}(n-n_1)\in L^1(Q)+L^2(Q)$.
    In view of the assumptions,
    we conclude the integrability of the right-hand side of~\eqref{eq:eta.difference}
    in both cases, 
    which completes the proof.
\end{proof}

We further need the following coercivity estimate of the relative entropy.

\begin{lemma}\label{lem:coercivity}
    Let $\pp$ satisfy~\eqref{eq:pres.gamma}.
    For $\delta,M>0$ define the interval $K=[\delta,M]$.
    Then there exist constants $r_0>0$ and $C>0$, depending only on $K$, such that
    \begin{equation}\label{eq:eta.coercive}
    \eta(\tau;\rho,n\,\vert\,\ol\rho,0)
    \geq 
    \begin{cases}
    C\big(\ee^{-\tau}|n|^2+|\rho-\ol\rho|^2\big) &\text{ if }\rho\in[0,r_0],
    \\
    C\big((\ee^{-\tau}|n|^2)^{\gamma/(\gamma+1)} + |\rho-\ol\rho|^\gamma\big) &\text{ if }\rho\in(r_0,\infty),
    \end{cases}
    \end{equation}
    for all $n\in\R$ and  $\ol\rho\in K$.
    Moreover, if $\ol\rho=0$, then \eqref{eq:eta.coercive} holds with $r_0=0$.
\end{lemma}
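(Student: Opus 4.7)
The plan is to use identity~\eqref{eq:relH.Fp} to rewrite the internal part as $h(\rho\,\vert\,\ol\rho) = \gamma k F_\gamma(\rho/\ol\rho)\,\ol\rho^\gamma$, and then split the range of $\rho$ at a threshold $r_0 > M$, concretely $r_0 := 2M$. This particular choice serves two purposes: in the regime $\rho \in [0,r_0]$ the ratio $z := \rho/\ol\rho$ is confined to the compact interval $[0,\,r_0/\delta]$, on which any continuous positive function admits a positive lower bound depending only on $K$; in the regime $\rho > r_0$ the gap $\rho - \ol\rho \geq r_0 - M = M$ is bounded below uniformly in $\ol\rho \in K$, so $(\rho - \ol\rho)^\gamma$ can be compared with $\rho^\gamma$.

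For the first case $\rho \in [0, r_0]$, I would use Taylor expansion of $F_\gamma$ around $z = 1$ together with $F_\gamma''(1) = 1$ to see that $z \mapsto F_\gamma(z)/(z-1)^2$ extends continuously to $z = 1$ with value $1/2 > 0$; its value at $z = 0$ is $F_\gamma(0) = 1/\gamma$ for $\gamma > 1$ (respectively $1$ for $\gamma = 1$), still positive. Hence this ratio is bounded below by some $c_1 = c_1(K) > 0$ on $[0, r_0/\delta]$, which combined with~\eqref{eq:relH.Fp} gives $h(\rho\,\vert\,\ol\rho) \geq c_1 \gamma k\,\ol\rho^{\gamma-2}(\rho - \ol\rho)^2 \geq C_1 (\rho - \ol\rho)^2$ uniformly for $\ol\rho \in K$. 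The kinetic part is controlled by $\rho \leq r_0$: when $\rho > 0$ one has $\tfrac12 \ee^{-\tau} n^2/\rho \geq \tfrac{1}{2 r_0}\ee^{-\tau} n^2$, and when $\rho = 0$ the convention $0/0 = 0$ together with finiteness of the relative entropy forces $n = 0$, so both sides vanish.

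For the second case $\rho > r_0$, the asymptotic behavior $F_\gamma(z)/z^\gamma \to 1/(\gamma(\gamma-1)) > 0$ as $z \to \infty$ (with an analogous log-based asymptotic for $\gamma = 1$) yields $F_\gamma(z) \geq c_2 z^\gamma$ on $[r_0/M, \infty) = [2, \infty)$ for some $c_2 = c_2(K) > 0$. Via~\eqref{eq:relH.Fp}, this gives $h(\rho\,\vert\,\ol\rho) \geq c_2 \gamma k\,\rho^\gamma \geq c_2 \gamma k\,(\rho - \ol\rho)^\gamma$. To recover the kinetic-type bound, I would factor $(\ee^{-\tau}n^2)^{\gamma/(\gamma+1)} = (\ee^{-\tau} n^2/\rho)^{\gamma/(\gamma+1)} \, \rho^{\gamma/(\gamma+1)}$ and apply Young's inequality with conjugate exponents $p = (\gamma+1)/\gamma$ and $q = \gamma+1$, obtaining
\[
(\ee^{-\tau}n^2)^{\gamma/(\gamma+1)} \leq \tfrac{\gamma}{\gamma+1}\,\ee^{-\tau}\tfrac{n^2}{\rho} + \tfrac{1}{\gamma+1}\rho^\gamma.
\]
Since the right-hand side is controlled by $\eta$ up to a constant depending only on $K$, combining with the $|\rho-\ol\rho|^\gamma$ bound already derived yields the desired estimate.

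Finally, the case $\ol\rho = 0$ (which is meaningful for $\gamma > 1$) is just the second case in disguise: $h(\rho\,\vert\,0) = \tfrac{k}{\gamma-1}\rho^\gamma$ directly bounds $\rho^\gamma = |\rho - 0|^\gamma$, so the threshold becomes redundant and $r_0 = 0$ works. The argument is not conceptually deep; the only genuine bookkeeping issue is ensuring that the constants $c_1$, $c_2$ and the Young constants remain uniform over $\ol\rho \in K$, which is exactly what dictates the choice $r_0 = 2M$: it simultaneously keeps $z = \rho/\ol\rho$ in a fixed compact set in the first case and enforces $z \geq 2$ (and $\rho - \ol\rho \geq M$) in the second.
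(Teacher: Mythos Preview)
Your proposal is correct and follows essentially the same route as the paper: split at $r_0=2M$, obtain a quadratic lower bound on $h(\rho\,\vert\,\ol\rho)$ for bounded $\rho$ and a $\rho^\gamma$--type bound for large $\rho$, then recover the kinetic part via Young's inequality. The only notable differences are cosmetic: you exploit the representation~\eqref{eq:relH.Fp} and argue by continuity of $F_\gamma(z)/(z-1)^2$ on the compact interval $[0,r_0/\delta]$, whereas the paper works directly with $h$ via Taylor's formula and the asymptotics $h(\rho)/\rho\to\infty$; and your Young step bounds $(\ee^{-\tau}n^2)^{\gamma/(\gamma+1)}$ by $\ee^{-\tau}n^2/\rho+\rho^\gamma$ and controls each summand separately, while the paper combines the kinetic and internal parts and then uses $|\rho-\ol\rho|/\rho\geq\tfrac12$. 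Your $F_\gamma$--based bookkeeping is arguably a bit cleaner in that it treats the full range $\rho\in[0,r_0]$ at once without a separate near--diagonal Taylor argument.
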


\begin{proof}
    For given $K=[\delta,M]$, let $r_0>0$.
    We first show that there exists $c> 0$
    such that
    \begin{equation}\label{eq:hrel.est.h}
    h(\rho\,\vert\,\ol \rho)
    \geq 
    \begin{cases}
    c\,|\rho-\ol\rho|^2 &\text{ if }\rho\leq r_0,
    \\
    c\,h(\rho-\ol\rho) &\text{ if }\rho> r_0,
    \end{cases}
    \end{equation}
    for all $\ol\rho\in K$.
    To this end, we show that 
    \[
    (\rho,\ol\rho)\mapsto 
    \frac{|\rho-\ol\rho|^2}{h(\rho\,|\,\ol\rho)} \chi_{[0,r_0]}(\rho)
    + \frac{h(\rho-\ol\rho)}{h(\rho\,|\,\ol\rho)} \chi_{(r_0,\infty)}(\rho)
    \]
    defines a bounded function on $(0,\infty)\times K$,
    where $\chi_M$ is the characteristic function of a set $M\subset\R$.
    Indeed, by Taylor's formula, we have for $\rho\in K$ that
    \[
    h(\rho\,\vert\,\ol\rho)
    =\int_{\ol\rho}^\rho h''(z)(\rho-z)\dd z 
    \geq \frac{1}{2}\min_{z\in K}h''(z) (\rho-\ol\rho)^2
    = c_1(\rho-\ol\rho)^2 
    \]
    for some $c_1>0$ due to $h''(z)=\frac{\pp'(z)}{z}>0$.
    Since $h(z)/z\to\infty$ as $z\to\infty$, we obtain
    \[
    h(\rho\,\vert\,\ol\rho)
    \geq h(\rho)-\max_{z\in K} h(z)- \max_{z\in K} h'(z)\rho + \min_{z\in K} h'(z) z
    \geq c_2 h(\rho)
    \]
    for some constant $c_2>0$
    if $\rho$ is sufficiently large. 
    In total, this yields~\eqref{eq:hrel.est.h} 
    for any choice $r_0\geq M$.
    In particular, we can choose $r_0\geq 2M$.
    Note that~\eqref{eq:hrel.est.h} holds for all $\gamma\geq 1$, 
    and using~\eqref{eq:h_gammaLaw}
    we can conclude the existence of $r_0\geq 2M$ and $c>0$ such that
    \begin{equation}\label{eq:hrel.est.gamma}
    h(\rho\,\vert\,\ol \rho)
    \geq 
    \begin{cases}
    c\,|\rho-\ol\rho|^2 &\text{ if }\rho\leq r_0,
    \\
    c\,|\rho-\ol\rho|^\gamma &\text{ if }\rho> r_0,
    \end{cases}
    \end{equation}
    for all $\ol\rho\in K$.   
    Clearly,~\eqref{eq:hrel.est.h} and~\eqref{eq:hrel.est.gamma} also hold
    for $r_0=0$ if $\ol\rho=0$.
    To estimate the kinetic-energy term, 
    we use~\eqref{eq:hrel.est.gamma} to first derive
    \[
    \eta(\tau;\rho,n\,\vert\,\ol\rho,0)
    \geq \ee^{-\tau}\frac{|n|^2}{2r_0}+c|\rho-\ol\rho|^2 
    \qquad
    \text{if }\rho\leq r_0.
    \]
    By Young's inequality,
    we further have
    \[
    \ee^{-\tau}\frac{|n|^2}{2\rho}+\frac{c}{2}|\rho-\ol\rho|^\gamma
    \geq C_\gamma (\ee^{-\tau}|n|^2)^{\gamma/(\gamma+1)}\Big(\frac{|\rho-\ol\rho|}{\rho}\Big)^{\gamma/(\gamma+1)}
    \]
    for some constant $C_\gamma>0$.
    Since $|\rho-\ol\rho|\geq \rho - M \geq \rho-r_0/2$, we conclude with~\eqref{eq:hrel.est.gamma} that
    \[
    \eta(\tau;\rho,n\,\vert\,\ol\rho,0)
    \geq 
    C_\gamma (\ee^{-\tau}|n|^2)^{\gamma/(\gamma+1)}\Big(\frac{1}{2}\Big)^{\gamma/(\gamma+1)}
    +\frac{c}{2}|\rho-\ol\rho|^\gamma
    \qquad
    \text{if }\rho> r_0.
    \]
    In total, this shows~\eqref{eq:eta.coercive}.
\end{proof}

In the following lemmata, we derive useful estimates of the functions $\xi_1$, $\xi_2$, $\xi_3$ defined in~\eqref{eq:xi.def}
in the case of a pressure law of the form~\eqref{eq:pres.gamma}.

\begin{lemma} \label{le:xi.1.bound}
    Assume that $\pp$ satisfies~\eqref{eq:pres.gamma}. 
    Let $(\rho,n)\in L^1_\mathrm{loc}((0,\infty)\times\R)^2$ such that $\rho\geq 0$ a.e.,
    and let $(\ol\rho,\ol n)\in C^1((0,\infty)\times\R)^2$ such that $\inf\ol\rho>0$.
    Then $\xi_1$ satisfies the estimates
 \[
 \begin{aligned}
   \xi_1 
   &\leq \max\{2,\gamma-1 \}\Big[-\Big(\frac{\ol n}{\ol \rho}\Big)_y \Big]_+\, \eta(\tau; \rho,n \, | \, \ol \rho, \ol n),
   \\
   |\xi_1| 
   &\leq \max\{2,\gamma-1 \}\Big|\Big(\frac{\ol n}{\ol \rho}\Big)_y \Big| \ \eta(\tau; \rho,n \, | \, \ol \rho, \ol n).
   \end{aligned}
   \]
\end{lemma}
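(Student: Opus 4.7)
The plan is to reduce everything to a direct pointwise identity by exploiting the special algebraic structure of the $\gamma$-law and the definitions of $\eta$ and $\xi_1$.

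First I would unpack $\xi_1$ as the product of $-(\ol n/\ol\rho)_y$ and a nonnegative expression, namely $\ee^{-\tau}\rho|n/\rho-\ol n/\ol\rho|^2 + \pp(\rho\,\vert\,\ol\rho)$. The kinetic piece is exactly twice the kinetic part of $\eta(\tau;\rho,n\,\vert\,\ol\rho,\ol n)$, so it contributes a factor $2$. For the potential piece, I would compute $\pp(\rho\,\vert\,\ol\rho)$ under the $\gamma$-law directly: inserting $\pp(z)=kz^\gamma$ into the definition of the relative pressure yields
\begin{equation*}
\pp(\rho\,\vert\,\ol\rho) = k\ol\rho^{\gamma}\Bigl((\rho/\ol\rho)^\gamma - 1 - \gamma\bigl(\rho/\ol\rho -1\bigr)\Bigr).
\end{equation*}

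The key observation is that the same quantity appears in $h(\rho\,\vert\,\ol\rho)$. Using the representation \eqref{eq:relH.Fp} together with the explicit form of $F_\gamma$ in~\eqref{eq:Fp}, one finds $h(\rho\,\vert\,\ol\rho)= \tfrac{k}{\gamma-1}\ol\rho^\gamma((\rho/\ol\rho)^\gamma-1-\gamma(\rho/\ol\rho-1))$ for $\gamma>1$, and consequently the clean identity
\begin{equation*}
\pp(\rho\,\vert\,\ol\rho) = (\gamma-1)\,h(\rho\,\vert\,\ol\rho).
\end{equation*}
For $\gamma=1$ the pressure is linear, so $\pp(\rho\,\vert\,\ol\rho)\equiv 0$, and the identity still holds trivially with coefficient $\gamma-1=0$. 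This step — recognizing the proportionality between relative pressure and relative internal energy — is the only nontrivial part; once it is in hand, everything else is bookkeeping.

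With these two relations in place I would assemble
\begin{equation*}
\ee^{-\tau}\rho\Big|\tfrac{n}{\rho}-\tfrac{\ol n}{\ol\rho}\Big|^2 + \pp(\rho\,\vert\,\ol\rho) = 2\cdot\tfrac{1}{2}\ee^{-\tau}\rho\Big|\tfrac{n}{\rho}-\tfrac{\ol n}{\ol\rho}\Big|^2 + (\gamma-1)\,h(\rho\,\vert\,\ol\rho) \le \max\{2,\gamma-1\}\,\eta(\tau;\rho,n\,\vert\,\ol\rho,\ol n),
\end{equation*}
using the nonnegativity of both terms of $\eta$. Multiplying by $-(\ol n/\ol\rho)_y$ and estimating this prefactor by its positive part $[-(\ol n/\ol\rho)_y]_+$ gives the first claimed bound; taking absolute values gives the second. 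The hypotheses $\rho\ge 0$ a.e., $\inf\ol\rho>0$, and $\ol\rho,\ol n\in C^1$ are what is needed to make the pointwise manipulations meaningful (in particular, $(\ol n/\ol\rho)_y$ is well defined). I do not foresee any real obstacle here; the whole proof is a short computation once the relative-pressure-to-relative-energy identity is isolated.
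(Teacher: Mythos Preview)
Your proposal is correct and follows essentially the same approach as the paper: the paper's proof invokes the identity $\pp(\rho\,\vert\,\ol\rho) = (\gamma-1)\,h(\rho\,\vert\,\ol\rho)$ (valid for all $\gamma\ge 1$) and the nonnegativity of the relative entropy, which is exactly what you do, only with the derivation of that identity spelled out in more detail.
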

\begin{proof}
We use $\pp(\rho \, \vert \, \ol \rho) = (\gamma{-}1) \, h(\rho \, \vert \, \ol \rho)$, which holds for all $\gamma \geq 1$, and immediately obtain 
both estimates since the relative entropy is nonnegative.
\end{proof}

\begin{lemma} \label{le:xi.2.bound}
Under the assumptions of Lemma~\ref{le:xi.1.bound}, 
the function $\xi_2$ satisfies the estimate
\[
| \xi_2 | \leq \Big(
\frac{1}{2 k \ol\rho^\gamma} + \frac{3}{2\ol \rho}
\Big)|\ol R|
\, \ee^{-\tau/2} \, \eta(\tau; \rho,n \, | \, \ol \rho, \ol n) +  \ee^{-\tau/2} |\ol R|, 
\]
where $\ol R:= \frac{\ol n}{\ol\rho} \ol R_1- \ol R_2$
with $\ol R_1$, $\ol R_2$ defined in~\eqref{eq:residuals}.
\end{lemma}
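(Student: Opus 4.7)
The plan is to start from the definition $\xi_2=\ee^{-\tau}\ol R\,\frac{\rho}{\ol\rho}(\frac{n}{\rho}-\frac{\ol n}{\ol\rho})$, peel off a prefactor $\ee^{-\tau/2}|\ol R|$, and then apply a weighted Young inequality tuned so that the resulting ``kinetic'' remainder has exactly coefficient $\frac{3}{2\ol\rho}$ against the relative kinetic energy, while the resulting ``density'' remainder is controlled by the relative internal energy $h(\rho\,\vert\,\ol\rho)$ using the explicit representation via $F_\gamma$.

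Concretely, I would first write
\[
|\xi_2|=\ee^{-\tau/2}|\ol R|\cdot \ee^{-\tau/2}\frac{\rho}{\ol\rho}\Big|\frac{n}{\rho}-\frac{\ol n}{\ol\rho}\Big|,
\]
and factorize the second factor as $a\cdot b$ with $a=\ee^{-\tau/2}\sqrt{\rho}\,|\frac{n}{\rho}-\frac{\ol n}{\ol\rho}|$ and $b=\sqrt{\rho}/\ol\rho$. Applying $ab\leq \frac{a^2}{2\epsilon}+\frac{\epsilon b^2}{2}$ with the weight $\epsilon=\frac{2\ol\rho}{3}$ yields
\[
\ee^{-\tau/2}\frac{\rho}{\ol\rho}\Big|\frac{n}{\rho}-\frac{\ol n}{\ol\rho}\Big|
\leq \frac{3}{2\ol\rho}\cdot\frac{1}{2}\ee^{-\tau}\rho\Big|\frac{n}{\rho}-\frac{\ol n}{\ol\rho}\Big|^2+\frac{\rho}{3\ol\rho}.
\]
The first summand is $\frac{3}{2\ol\rho}$ times the relative kinetic energy, which is bounded by $\frac{3}{2\ol\rho}\,\eta(\tau;\rho,n\,\vert\,\ol\rho,\ol n)$ since $h(\rho\,\vert\,\ol\rho)\geq 0$. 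This already produces the $\frac{3}{2\ol\rho}$ contribution in the stated bound.

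The remaining task is to show $\frac{\rho}{3\ol\rho}\leq \frac{1}{2k\ol\rho^\gamma}\,\eta+1$. Using $\eta\geq h(\rho\,\vert\,\ol\rho)$ together with the identity $h(\rho\,\vert\,\ol\rho)=\gamma k F_\gamma(\rho/\ol\rho)\,\ol\rho^\gamma$ (valid for all $\gamma\geq1$) and the lower bound $\gamma F_\gamma(z)\geq(\sqrt z-1)^2$, which follows from~\eqref{eq:Fp.bound} since $\max\{\gamma,1-\gamma\}=\gamma$ when $\gamma\geq1$, I reduce the claim to the elementary pointwise inequality
\[
\frac{z}{3}\leq \frac{(\sqrt z-1)^2}{2}+1\qquad(z\geq 0).
\]
This is equivalent to $(u-3)^2\geq 0$ after substituting $u=\sqrt z$, hence trivially true. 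Applied with $z=\rho/\ol\rho$, it gives
\[
\frac{\rho}{3\ol\rho}\leq \frac{(\sqrt{\rho/\ol\rho}-1)^2}{2}+1\leq \frac{\gamma F_\gamma(\rho/\ol\rho)}{2}+1=\frac{h(\rho\,\vert\,\ol\rho)}{2k\ol\rho^\gamma}+1\leq\frac{\eta}{2k\ol\rho^\gamma}+1.
\]
Multiplying the two bounds by $\ee^{-\tau/2}|\ol R|$ and adding them gives exactly the claimed estimate.

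The main subtlety, and effectively the only nontrivial choice in the argument, is the pairing between the Young weight $\epsilon=\frac{2\ol\rho}{3}$ and the algebraic inequality $u^2/3\leq(u-1)^2/2+1$: these two ingredients must be chosen compatibly so that the residual density term can be absorbed into exactly a $\frac{1}{2k\ol\rho^\gamma}$-multiple of the internal energy plus the universal constant $1$. Once this matching is seen, the rest is bookkeeping; no further analytic input (e.g.\ distinguishing $\gamma=1$ from $\gamma>1$, or treating small/large $\rho$) is needed, because the bound $\gamma F_\gamma(z)\geq(\sqrt z-1)^2$ handles every $\gamma\geq1$ uniformly.
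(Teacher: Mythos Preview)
Your proof is correct and reaches exactly the stated bound, but it is organized differently from the paper's argument. The paper first decomposes $\rho=\sqrt{\rho}(\sqrt{\rho}-\sqrt{\ol\rho})+\sqrt{\rho}\sqrt{\ol\rho}$, which splits $\xi_2$ into two pieces; it then applies Young's inequality separately to each piece and uses~\eqref{eq:relH.est.sqrt} to control $(\sqrt\rho-\sqrt{\ol\rho})^2$ by $h(\rho\,\vert\,\ol\rho)$, obtaining the contributions $\frac{1}{2k\ol\rho^\gamma}+\frac{1}{\ol\rho}$ from the first piece and $\frac{1}{2\ol\rho}$ plus the constant $1$ from the second. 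You instead apply Young's inequality once with the tuned weight $\epsilon=\frac{2\ol\rho}{3}$, which immediately isolates the full $\frac{3}{2\ol\rho}$ kinetic contribution, and then absorb the residual density term $\frac{\rho}{3\ol\rho}$ through the elementary scalar inequality $z/3\le(\sqrt z-1)^2/2+1$ combined with~\eqref{eq:Fp.bound} and~\eqref{eq:relH.Fp}. Both routes ultimately rest on the same lower bound $\gamma F_\gamma(z)\ge(\sqrt z-1)^2$; the paper's decomposition is perhaps more transparent about where each constant comes from, while your argument is slightly shorter and avoids splitting into cases at the cost of requiring the matched pair (Young weight, algebraic inequality) that you correctly identify as the only nontrivial step.
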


\begin{proof}
    We split $\xi_2$ into 
    \begin{align*}
    \xi_2 
    &= \ee^{-\tau} \frac{\ol R}{\ol \rho} \Big( \frac{n}{\rho} - \frac{\ol n}{\ol \rho}\Big) \sqrt{\rho} \big( \sqrt{\rho} {-} \sqrt{\ol \rho} \big) 
    +  \ee^{-\tau} \frac{\ol R}{\sqrt{\ol \rho}} \sqrt{\rho}\Big( \frac{n}{\rho} - \frac{\ol n}{\ol \rho}\Big) .
    \end{align*}
By Young's inequality and~\eqref{eq:relH.est.sqrt}, the first term gives
\begin{align*}
     \Big|\ee^{-\tau} & \frac{\ol R}{\ol \rho} \Big( \frac{n}{\rho} - \frac{\ol n}{\ol \rho}\Big) \big( \sqrt{\rho} {-} \sqrt{\ol \rho} \Big) \sqrt{\rho} \Big|
    \leq \ee^{-\tau/2} \frac{|\ol R|}{\ol \rho} \Big[ \frac12 \big( \sqrt{\rho} {-} \sqrt{\ol \rho} \big)^2  + \frac12 \ee^{-\tau}  \rho \Big( \frac{n}{\rho} - \frac{\ol n}{\ol \rho}\Big)^2  \Big] \\
    & \leq \ee^{-\tau/2} \frac{|\ol R|}{\ol \rho} \Big[ \frac{1}{2k} {\ol \rho}^{1-\gamma}  h(\rho \, | \, \ol \rho)  + \frac12 \ee^{-\tau}  \rho \Big( \frac{n}{\rho} - \frac{\ol n}{\ol \rho}\Big)^2  \Big]    
    \leq \ee^{-\tau/2} \Big(\frac{|\ol R|}{2k\ol \rho^\gamma} +\frac{|\ol R|}{\ol\rho}\Big)
    \eta(\tau; \rho,n \, | \, \ol \rho, \ol n).
\end{align*}
For the second term, we use Young's inequality and obtain
\[
\Big|\ee^{-\tau} \frac{\ol R}{\sqrt{\ol \rho}} \Big( \frac{n}{\rho} - \frac{\ol n}{\ol \rho}\Big)  \sqrt{\rho} \Big|
\leq \ee^{-\tau/2} \Big[ |\ol R| + \frac{|\ol R|}{4\ol\rho} \ee^{-\tau} \rho \Big( \frac{n}{\rho} - \frac{\ol n}{\ol \rho}\Big)^2 \Big] 
\leq \ee^{-\tau/2} \Big[ |\ol R| +  \frac{|\ol R|}{2\ol\rho}\eta(\tau; \rho,n \, | \, \ol \rho, \ol n) \Big].
\]
Combining both estimates yields the desired result.
\end{proof}

\begin{lemma} \label{le:xi.3.bound}
    Under the assumptions of Lemma~\ref{le:xi.1.bound}, the function $\xi_3$ satisfies the estimate
    \[
   | \xi_3 | \leq \frac{2\gamma |\ol R_1|} {{\ol\rho}} \, h( \rho \, | \, \ol \rho) + \ol \rho | h''(\ol \rho) \ol R_1 |
    \]
    with $\ol R_1$ defined in~\eqref{eq:residuals}.
\end{lemma}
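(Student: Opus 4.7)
The plan is to start from the definition
\[
|\xi_3| = |\rho - \ol\rho|\, h''(\ol\rho)\, |\ol R_1|,
\]
valid since $h''(\ol\rho) > 0$ under \eqref{eq:pres.gamma} and the standing assumption $\inf\ol\rho>0$, and to reduce the estimate to a single pointwise scalar inequality independent of $\ol R_1$, namely
\[
|\rho - \ol\rho|\, h''(\ol\rho) \leq \ol\rho\, h''(\ol\rho) + \frac{2\gamma}{\ol\rho}\, h(\rho\,|\,\ol\rho).
\]

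My first step is the elementary decomposition $|\rho - \ol\rho| \leq \ol\rho + (\rho - 2\ol\rho)_+$, which is immediate by splitting into $\rho \leq 2\ol\rho$ (where the positive-part term vanishes and $|\rho-\ol\rho|\leq\ol\rho$) and $\rho > 2\ol\rho$ (where one has equality). The $\ol\rho$-contribution produces the term $\ol\rho\,|h''(\ol\rho)\,\ol R_1|$ directly. For the remaining $(\rho - 2\ol\rho)_+$-piece I would introduce the normalized variable $z := \rho/\ol\rho$ and use both the representation $h(\rho\,|\,\ol\rho) = \gamma k F_\gamma(z)\,\ol\rho^\gamma$ from \eqref{eq:relH.Fp} and the identity $\ol\rho\, h''(\ol\rho) = \gamma k\,\ol\rho^{\gamma-1}$, valid uniformly for $\gamma \geq 1$ by a direct computation from \eqref{eq:h_gammaLaw}. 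After cancelling the common factor $\gamma k\,\ol\rho^{\gamma-1}$, the whole claim reduces to the dimensionless inequality
\[
(z-2)_+ \leq 2\gamma\, F_\gamma(z), \qquad z \geq 0,\ \gamma \geq 1.
\]

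Only the regime $z \geq 2$ is nontrivial. Convexity of $F_\gamma$ together with $F_\gamma(1) = F_\gamma'(1) = 0$ yields the tangent bound $F_\gamma(z) \geq F_\gamma'(2)(z-2)$ for $z \geq 2$, so it is enough to verify $2\gamma\, F_\gamma'(2) \geq 1$: this is $2\log 2 \geq 1$ for $\gamma = 1$ (trivial) and $f(\gamma) := \gamma 2^\gamma - 3\gamma + 1 \geq 0$ for $\gamma > 1$, the latter being an equivalent rewriting of $2\gamma(2^{\gamma-1}-1) \geq \gamma - 1$.

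The main, though purely computational, obstacle is this last scalar inequality. I would handle it by noting $f(1) = 0$, $f'(1) = 2\log 2 - 1 > 0$, and $f''(\gamma) = 2^\gamma \log 2\,(2 + \gamma \log 2) > 0$ on $[1,\infty)$; convexity of $f$ with nonnegative value and positive slope at $\gamma = 1$ then forces $f \geq 0$ throughout. Multiplying the resulting pointwise bound by $|\ol R_1|$ delivers the stated estimate for $|\xi_3|$.
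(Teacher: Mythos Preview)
Your proof is correct but follows a different route than the paper. The paper uses the square-root decomposition
\[
\rho-\ol\rho=(\sqrt{\rho}-\sqrt{\ol\rho})^2+2\sqrt{\ol\rho}(\sqrt{\rho}-\sqrt{\ol\rho}),
\]
applies Young's inequality to the cross term to obtain $|\rho-\ol\rho|\leq 2(\sqrt{\rho}-\sqrt{\ol\rho})^2+\ol\rho$, and then invokes the already-prepared bound \eqref{eq:relH.est.sqrt}, namely $(\sqrt{\rho}-\sqrt{\ol\rho})^2\leq k^{-1}\ol\rho^{1-\gamma}h(\rho\,|\,\ol\rho)$, together with $h''(\ol\rho)=k\gamma\ol\rho^{\gamma-2}$ to finish. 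You instead use the split $|\rho-\ol\rho|\leq\ol\rho+(\rho-2\ol\rho)_+$, normalize to $z=\rho/\ol\rho$, and reduce to the scalar inequality $(z-2)_+\leq 2\gamma F_\gamma(z)$, which you then verify by a tangent-line argument and a convexity check on $f(\gamma)=\gamma 2^\gamma-3\gamma+1$. Both arguments are clean; the paper's is shorter because it recycles \eqref{eq:relH.est.sqrt} (itself a consequence of \eqref{eq:Fp.bound}), while yours is more self-contained at the cost of the auxiliary calculus for $f$. One minor phrasing point: when you write ``convexity of $F_\gamma$ together with $F_\gamma(1)=F_\gamma'(1)=0$ yields $F_\gamma(z)\geq F_\gamma'(2)(z-2)$'', you are implicitly using that those conditions force $F_\gamma\geq 0$, hence $F_\gamma(2)\geq 0$, so that the full tangent inequality $F_\gamma(z)\geq F_\gamma(2)+F_\gamma'(2)(z-2)$ can be weakened as stated; making this explicit would help the reader.
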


\begin{proof}
    Analogously to the previous lemma, we use inequality~\eqref{eq:relH.est.sqrt}
    and Young's inequality to obtain 
    \begin{align*}
    | \xi_3 |
    &=  \big| (\rho - \ol \rho) h''(\ol \rho) \ol R_1 \big| = \big| \big(\sqrt{\rho} {-} \sqrt{\ol \rho}\big)^2
    + 2\sqrt{\ol \rho}\big(\sqrt{\rho} {-} \sqrt{\ol \rho}\big) \big | \, | h''(\ol \rho) \ol R_1 | \\
    & \leq  2\big(\sqrt{\rho} {-} \sqrt{\ol \rho}\big)^2 | h''(\ol \rho) \ol R_1|
    +  \ol \rho  | h''(\ol \rho) \ol R_1 | 
    \\
    &\leq  2| h''(\ol \rho) \ol R_1| \, \frac{1}{k}\ol \rho^{1-\gamma} h( \rho \, | \, \ol \rho)
    +  \ol \rho  | h''(\ol \rho) \ol R_1 | 
    = \frac{2\gamma |\ol R_1|} {{\ol\rho}} h( \rho \, | \, \ol \rho) + \ol \rho | h''(\ol \rho) \ol R_1 |
    \end{align*}
    as claimed.
\end{proof}

\subsection{Weak solutions satisfying~\ref{item:E1}}

We begin with the derivation of 
a local relative entropy inequality
for weak solutions that satisfy criterion~\ref{item:E1}.

\begin{proposition}\label{prop:relent.weak.local}
    Let $\pp\in C([0,\infty))\cap C^1(0,\infty)$ with $\pp'>0$,
    and let $(\rho,n)$ be a weak solution 
    to \eqref{eq:Euler.trans} that satisfies~\ref{item:E1}.
    Let
    $(\ol\rho,\ol n)\in C^1((0,\infty)\times\R)^2\cap C([0,\infty)\times\R)^2$ with $\ol\rho>0$
    or $(\ol\rho,\ol n)\equiv(0,0)$.
    Then 
    \begin{equation}
   \label{eq:relen.weak.local}
   \begin{aligned}
   &\int_0^\infty\!\!\!\int_\R -  \chi_{\tau}\, \eta (\tau; \rho,n\,\vert\,\ol\rho,\ol n)
   + \chi_y\Big(\frac{y}{2}\eta (\tau; \rho,n\,\vert\,\ol\rho,\ol n) - q(\tau;\rho,\eta\,\vert\,\ol\rho,\ol n)\Big) \dd y \dd \tau
   \\
   &\quad
   + \int_0^\infty\!\!\!\int_\R 
   \chi\Big(\frac{1}{2}\eta (\tau; \rho,n\,\vert\,\ol\rho,\ol n) 
   + \alpha \rho \Big\vert \frac n\rho {-} \frac{\ol n}{\ol\rho} \Big\vert^2\Big)
   \dd y \dd\tau
   \\
   &\qquad
   \leq \int_\R \chi(0)\,\eta(0;\rho_0,n_0\,\vert\,\ol\rho(0),\ol n(0))\dd y
   + \int_0^\infty\!\!\!\int_\R
   \chi \, \xi
   \dd y \dd\tau
   \end{aligned}
   \end{equation}
   for all $\chi\in C_c^\infty([0,\infty) \times \R)$ with $\chi\geq 0$,
   where $\xi = \xi_1 + \xi_2 +\xi_3$ 
   is defined by~\eqref{eq:xi.def}. 
\end{proposition}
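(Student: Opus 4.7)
The plan is to combine the local entropy inequality~\eqref{eq:weakEnt.scaled} from~\ref{item:E1} with the weak continuity and momentum equations~\eqref{eq:conteq.weak},~\eqref{eq:momeq.weak}, testing the latter two against suitable expressions built from $(\ol\rho,\ol n)$ that encode the linearization of $\eta$ at the reference state. Concretely, I set
\[
\Phi := h'(\ol\rho) - \tfrac12\ee^{-\tau}(\ol n/\ol\rho)^{2},
\qquad
\Psi := \ee^{-\tau}(\ol n/\ol\rho),
\]
so that, using $\pp(\ol\rho) = \ol\rho\,h'(\ol\rho) - h(\ol\rho)$ from~\eqref{eq:h.properties}, a direct calculation gives the pointwise identity
\[
\eta(\tau;\rho,n\,\vert\,\ol\rho,\ol n) = \eta(\tau;\rho,n) + \pp(\ol\rho) - \Phi\,\rho - \Psi\,n .
\]
In the degenerate case $(\ol\rho,\ol n)\equiv(0,0)$ one has $\Phi=\Psi=\pp(\ol\rho)=0$ and the error term $\xi$ vanishes, so~\eqref{eq:relen.weak.local} reduces to~\eqref{eq:weakEnt.scaled} and there is nothing to prove.

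I would subtract from~\eqref{eq:weakEnt.scaled} the weak continuity equation tested against $\phi := \chi\Phi$ and the weak momentum equation tested against $\psi := \chi\Psi$. Since $\Phi,\Psi$ are merely smooth (not compactly supported in $y$), a short approximation argument is needed: the boundedness of $\rho$ and $n/\rho$ in~\ref{item:E1}, together with the spatial compactness enforced by $\chi$, allows me first to truncate $\Phi,\Psi$ in $y$, apply the weak formulations, and then pass to the limit. Using the identity above, the $\chi_\tau$-contributions from the three expressions collapse into $-\chi_\tau\,\eta(\tau;\rho,n\,\vert\,\ol\rho,\ol n)$ up to the reference piece $\chi_\tau\pp(\ol\rho)$, which I move to the error side via
\[
\int_0^\infty\!\!\!\int_\R \chi_\tau\,\pp(\ol\rho)\dd y\dd\tau
= -\int_\R \chi(0)\,\pp(\ol\rho(0))\dd y
- \int_0^\infty\!\!\!\int_\R \chi\,\pp(\ol\rho)_\tau\dd y\dd\tau :
\]
the first term on the right cancels the unwanted $-\pp(\ol\rho(0))$ appearing in the initial datum, while the second is absorbed into $\xi$. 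An analogous integration by parts in $y$ handles the reference piece $-\tfrac{y}{2}\pp(\ol\rho)$ that shows up in the $\chi_y$-coefficient.

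Next I would regroup the $\chi_y$-terms: a direct calculation using the explicit form of $q$ and the identity $h'(\rho)\rho = h(\rho) + \pp(\rho)$ shows that they collapse into $\chi_y\bigl(\tfrac{y}{2}\eta(\tau;\rho,n\,\vert\,\ol\rho,\ol n) - q(\tau;\rho,n\,\vert\,\ol\rho,\ol n)\bigr)$; the middle term $\rho(h'(\rho){-}h'(\ol\rho))(n/\rho{-}\ol n/\ol\rho)$ of the relative-entropy flux~\eqref{eq:relentr.relentflux} emerges from the combination of $-\Psi\ee^\tau\pp(\rho)$, $-\Phi n$, and $-\Psi n^2/\rho$. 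The damping contribution from the momentum equation combines with $\alpha n^2/\rho$ in~\eqref{eq:weakEnt.scaled} and, after completing the square, yields $\alpha\rho|n/\rho - \ol n/\ol\rho|^2$ up to a reference remainder that joins the $\ol R_2$-piece of $\xi_2$. Similarly, the drift $\tfrac12\chi\,\eta$ from~\eqref{eq:weakEnt.scaled} together with $-\tfrac12\chi\Phi\rho$ from~\eqref{eq:conteq.weak} produces $\tfrac12\chi\,\eta(\tau;\rho,n\,\vert\,\ol\rho,\ol n)$ up to further reference corrections.

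The main obstacle will be the bookkeeping of all remaining $\chi$-linear terms, which carry the derivatives $\Phi_\tau,\Phi_y,\Psi_\tau,\Psi_y$ together with the corrections $\pp(\ol\rho)_\tau$ and $(\tfrac{y}{2}\pp(\ol\rho))_y$ introduced above. By the chain rule these expand into monomials in $\ol\rho_\tau,\ol\rho_y,\ol n_\tau,\ol n_y$, which I would eliminate in favor of the residuals $\ol R_1,\ol R_2$ via~\eqref{eq:residuals}. Two identities drive the regrouping: $h'(\ol\rho)_y = h''(\ol\rho)\ol\rho_y$ together with $\pp'(\ol\rho) = \ol\rho h''(\ol\rho)$ from~\eqref{eq:h.properties}, which let me trade internal-energy derivatives for the pressure derivatives needed to reconstruct $\ol R_2$; and the factorization $\rho(n/\rho - \ol n/\ol\rho) = n - (\ol n/\ol\rho)\rho$, which converts velocity-type factors into relative-velocity form. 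After these substitutions the remaining terms group exactly into $\chi\,\xi_1$ (the $(\ol n/\ol\rho)_y$-prefactor multiplying the kinetic and relative-pressure pieces), $\chi\,\xi_2$ (carrying the combination $\ol R = (\ol n/\ol\rho)\ol R_1 - \ol R_2$), and $\chi\,\xi_3$ (the residual $-(\rho-\ol\rho)h''(\ol\rho)\ol R_1$-term), which establishes~\eqref{eq:relen.weak.local}.
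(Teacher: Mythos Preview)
Your approach is essentially the same as the paper's: both combine the local entropy inequality~\eqref{eq:weakEnt.scaled} with the weak continuity and momentum equations tested against $-\chi h'(\ol\rho)$, $\tfrac12\chi\ee^{-\tau}(\ol n/\ol\rho)^2$, and $-\chi\ee^{-\tau}\ol n/\ol\rho$, respectively---you have merely packaged the first two into a single $\Phi$ via the clean identity $\eta(\tau;\rho,n\,\vert\,\ol\rho,\ol n)=\eta(\tau;\rho,n)+\pp(\ol\rho)-\Phi\rho-\Psi n$, which makes the motivation for the test functions more transparent. One minor remark: the approximation you mention is not about spatial support (since $\chi\Phi$ and $\chi\Psi$ already inherit compact support from $\chi$) but only about extending the weak formulations from $C_c^\infty$ to $C^1$ test functions, which is routine density.
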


\begin{proof}
We combine the local entropy inequality~\eqref{eq:weakEnt.scaled} from \ref{item:E1} with the weak formulation in~\eqref{eq:conteq.weak} and~\eqref{eq:momeq.weak}
for suitable test functions.
Let $\chi\in C_c^\infty([0,\infty) \times \R)$.
Testing~\eqref{eq:momeq.weak} with $\psi(\tau,y) = -\chi(\tau,y)\,\ee^{-\tau} \frac{\ol n(\tau,y)}{\ol\rho(\tau,y)} $ gives
\begin{equation}\label{eq:relen.test1}
\begin{aligned}
  &\int_0^\infty\!\!\!\int_\R  \chi_\tau\ee^{-\tau} \frac{ n\ol n}{\ol\rho}
  +\chi_y\,\ee^{-\tau} \Big[ - \frac{y}{2} n + \frac{n^2}{\rho} + \ee^{\tau} \pp(\rho) \Big]  \frac{\ol n}{\ol\rho}\dd y\dd\tau
  +\int_\R \chi(0) \frac{n_0\ol n(0)}{\ol\rho(0)}\dd y
   \\
 &\ 
 =  \int_0^\infty\!\!\!\int_\R -\chi\ee^{-\tau}n\Big[\Big(\frac{\ol n}{\ol\rho}\Big)_\tau - \frac{\ol n}{\ol\rho}\Big] 
 - \chi\ee^{-\tau} \Big[ - \frac{y}{2} n + \frac{n^2}{\rho} + \ee^{\tau} \pp(\rho) \Big]  \Big( \frac{\ol n}{\ol\rho} \Big)_y
 +  \chi\alpha \frac{n \ol n}{\ol\rho}  \dd y \dd \tau .
\end{aligned}
\end{equation}
Taking $\phi(\tau,y) = \frac12 \,\chi(\tau,y) \ee^{-\tau} \big( \frac{\ol n (\tau,y) }{\ol\rho(\tau,y) } \big)^2 $ 
as a test function in~\eqref{eq:conteq.weak}, we arrive at
\begin{equation}\label{eq:relen.test2}
\begin{aligned}
    &\int_0^\infty\!\!\!\int_\R - \chi_\tau \ee^{-\tau} \rho \frac{\ol n^2}{2\ol\rho^2}
    -\chi_y \ee^{-\tau}\Big[-\frac{y}{2} \rho+n\Big] \frac{\ol n^2}{2\ol\rho^2}
    \dd y\dd\tau 
    -\int_\R \chi(0) \rho_0\frac{\ol n(0)^2}{2\ol\rho(0)^2}\dd y
    \\
     &\ 
     = \int_0^\infty\!\!\! \int_\R  \chi\ee^{-\tau}\rho  \Big[\frac{\ol n}{\ol\rho}\Big( \frac{\ol n}{\ol\rho}\Big)_\tau - \frac12\Big( \frac{\ol n}{\ol\rho}\Big)^2\Big]
     +  \chi\ee^{-\tau} \Big[-\frac{y}{2} \rho+n\Big]  \frac{\ol n}{\ol\rho}\Big( \frac{\ol n}{\ol\rho}\Big)_y
     - \frac14 \chi \ee^{-\tau} \rho  \Big( \frac{\ol n}{\ol\rho}\Big)^2  \dd y \dd \tau .
\end{aligned}
\end{equation}
Testing equation \eqref{eq:conteq.weak} with  
$\phi(\tau,y) =  -\chi(\tau,y)\, h'(\ol\rho(\tau,y))$ instead, we obtain
\begin{equation}\label{eq:relen.test3}
\begin{aligned}
   &\int_0^\infty\!\!\!\int_\R  \chi_\tau h'(\ol\rho) \rho 
   + \chi_y  h'(\ol\rho) \Big[ - \frac{y}{2} \rho + n \Big]
   \dd y\dd\tau
   +\int_\R \chi(0) h'(\ol\rho(0))\rho_0\dd y
   \\
   &\quad
   =  \int_0^\infty\!\!\! \int_\R - \chi h''(\ol\rho) \ol\rho_\tau\rho
   - \chi h''(\ol\rho) \ol\rho_y \Big[ - \frac{y}{2} \rho + n \Big]
   + \frac12\chi h'(\ol\rho) \rho \dd y \dd \tau. 
\end{aligned}
\end{equation}
We further need the elementary identities
\[
\begin{aligned}
\int_0^\infty\!\!\!\int_\R 
\chi_\tau\big( h(\ol\rho)-h'(\ol\rho)\ol\rho\big) 
\dd y\dd\tau
&+\int_\R\chi(0)\big( h(\ol\rho(0))-h'(\ol\rho(0))\ol\rho(0)\big)\dd y
\\
&\qquad\qquad\qquad\qquad
=\int_0^\infty\!\!\!\int_\R \chi  h''(\ol\rho)\ol\rho\,\ol\rho_\tau\dd y \dd \tau,
\\
\int_0^\infty\!\!\!\int_\R 
-\chi_y\frac{y}{2}\big( h(\ol\rho)-h'(\ol\rho)\ol\rho\big)\dd y\dd\tau
&=\int_0^\infty\!\!\!\int_\R -\chi \frac{y}{2}  h''(\ol\rho)\ol\rho\,\ol\rho_y
+\frac{1}{2}\chi\big( h(\ol\rho)-h'(\ol\rho)\ol\rho\big)
\dd y \dd \tau.
\end{aligned}
\]
Adding all these identities to
the weak local entropy inequality~\eqref{eq:weakEnt.scaled}
and taking into account that
\[
    \begin{aligned}
    \eta(\tau;\rho,n\,\vert\,\ol\rho,\ol n)
    &=\eta(\tau;\rho,n)-\ee^{-\tau}\frac{n\,\ol n}{\ol\rho}+\ee^{-\tau}\rho\frac{\ol n^2}{2\ol\rho^2}-h(\ol\rho)-h'(\ol\rho)(\rho{-}\ol\rho),
    \\
    q(\tau;\rho,n\,\vert\,\ol\rho,\ol n)
    &=q(\tau;\rho,n) - \ee^{-\tau}\frac{n^2\ol n}{\rho\ol\rho} + \ee^{-\tau}\frac{n\ol n^2}{2\ol \rho^2}-n h'(\ol\rho)
    +\frac{\ol n}{\ol \rho}\big(h(\rho){-}h(\ol\rho){-}h'(\rho)\rho{+}h'(\ol\rho)\ol\rho\big),
    \end{aligned}
\]
we obtain
\[
\begin{aligned}
    &\int_0^\infty\!\!\!\int_\R -  \chi_{\tau}\, \eta (\tau; \rho,n\,\vert\,\ol\rho,\ol n)
   + \chi_y\Big(\frac{y}{2}\eta (\tau; \rho,n\,\vert\,\ol\rho,\ol n) - q(\tau;\rho,\eta\,\vert\,\ol\rho,\ol n)\Big) \dd y \dd \tau
   \\
   &\qquad
   -\int_\R \chi(0) \eta(0;\rho_0,n_0\,\vert\,\ol\rho(0),\ol n(0))\dd y
   \\
   &
   \leq 
   \int_0^\infty\!\!\!\int_\R 
   \chi\Big(-\frac{1}{2}\eta (\tau; \rho,n\,\vert\,\ol\rho,\ol n) 
   - \alpha \rho \Big\vert \frac n\rho {-} \frac{\ol n}{\ol\rho} \Big\vert^2
   +\xi \Big)
   \dd y \dd\tau
   \\
   &\qquad+\int_0^\infty\!\!\!\int_\R -\chi_y \frac{\ol n}{\ol\rho}\big(\pp(\rho)+h(\rho){-}h(\ol\rho){-}h'(\rho)\rho{+}h'(\ol\rho)\ol\rho\big) \dd y\dd\tau
   \\
   &\qquad +\int_0^\infty\!\!\!\int_\R \chi \Big(
   \rho\Big(\frac{n}{\rho}-\frac{\ol n}{\ol\rho}\Big)\frac{\pp(\ol\rho)_y}{\ol\rho}
   -\big(
   \pp(\ol\rho)+\pp'(\ol\rho)(\rho-\ol\rho)\big)\Big( \frac{\ol n}{\ol\rho} \Big)_y
   \Big)
   \dd y\dd\tau
   \\
   &\qquad
   +\int_0^\infty\!\!\! \int_\R  \chi h''(\ol\rho) \big(\ol n_y(\rho-\ol\rho)
   - \ol\rho_y n \big)
   \dd y \dd \tau. 
\end{aligned}
\]
Employing the identities  from~\eqref{eq:h.properties},
we can simplify the last three lines as
\[
\begin{aligned}
   &\int_0^\infty\!\!\!\int_\R -\chi_y \frac{\ol n}{\ol\rho}\pp(\ol\rho) 
   -\chi \Big(
   \rho\frac{\ol n}{\ol\rho^2}\pp'(\ol\rho)\ol\rho_y
   +\big(
   \pp(\ol\rho)+\pp'(\ol\rho)(\rho-\ol\rho)\big)\Big( \frac{\ol n}{\ol\rho} \Big)_y
   - \pp'(\ol\rho) \frac{\ol n_y}{\ol\rho}(\rho-\ol\rho)\Big)
   \dd y \dd \tau
   \\
   &\qquad
   =\int_0^\infty\!\!\!\int_\R \Big(-\chi\frac{\ol n}{\ol\rho}\pp(\ol\rho)\Big)_y\dd y \dd \tau=0,
\end{aligned}
\]
and we conclude~\eqref{eq:relen.weak.local}.
\end{proof}

To derive an inequality for the total relative entropy,
we make use of Lemma~\ref{lem:entropyflux.control}.

\begin{theorem}\label{thm:relentr.weak.0}
    Let $\pp\in C([0,\infty))\cap C^1(0,\infty)$ with $\pp'>0$,
    and let $(\rho,n)$ be a weak solution 
    to \eqref{eq:Euler.trans}
    fulfilling \ref{item:E1}
    with initial data $(\rho_0,n_0)\in L^1_{\mathrm{loc}}(\R)^2$.
    Let either $(\ol\rho,\ol n)\in C^1((0,\infty)\times\R)^2\cap C([0,\infty)\times\R)^2$ be bounded
    such that $\inf\ol\rho>0$,   
    or let $(\ol\rho,\ol n)\equiv(0,0)$ and assume that $\pp$ satisfies~\eqref{eq:pres.better}.
    Let
    \begin{equation}
    \label{eq:relentr.weak.E1.cond}
    \EE(0;\rho_0,n_0 \, \vert \, \ol\rho(0),\ol n(0))<\infty,
    \qquad
    \xi \in L^1_{\mathrm{loc}}([0,\infty);L^1(\R)).
    \end{equation}
    Then the relative entropy inequality~\eqref{eq:relent.total.weak} holds.
\end{theorem}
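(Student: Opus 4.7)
I would reduce~\eqref{eq:relent.total.weak} to the local inequality~\eqref{eq:relen.weak.local} of Proposition~\ref{prop:relent.weak.local} by tensorizing the test function. For $\varphi\in C_c^1([0,\infty);[0,\infty))$ and $R>0$, let $\zeta_R\in C_c^\infty(\R;[0,1])$ be a spatial cutoff with $\zeta_R\equiv 1$ on $[-R,R]$, $\supp\zeta_R\subset[-2R,2R]$, $|\zeta_R'|\leq C/R$, and $|y\zeta_R'(y)|\leq C$ uniformly in $R$. Applying Proposition~\ref{prop:relent.weak.local} with $\chi(\tau,y):=\varphi(\tau)\zeta_R(y)$ and rearranging yields
\[
\begin{aligned}
\int_0^\infty(-\varphi')&\int_\R\zeta_R\,\eta\dd y\dd\tau+\int_0^\infty\varphi\int_\R\zeta_R\Big(\tfrac12\eta+\alpha\rho\Big|\tfrac{n}{\rho}-\tfrac{\ol n}{\ol\rho}\Big|^2\Big)\dd y\dd\tau\\
&\leq\varphi(0)\int_\R\zeta_R\,\eta(0)\dd y+\int_0^\infty\varphi\int_\R\zeta_R\,\xi\dd y\dd\tau+I_R,
\end{aligned}
\]
where $I_R:=-\int_0^\infty\varphi\int_\R\zeta_R'\big(\tfrac{y}{2}\eta-q\big)\dd y\dd\tau$ and $\eta,q$ denote the relative quantities from~\eqref{eq:relentr.relentflux}.

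To control $I_R$, I would invoke Lemma~\ref{lem:entropyflux.control}, whose hypotheses are satisfied by the $L^\infty$ bounds on $\rho$ and $n/\rho$ from~\ref{item:E1} (and, in the vacuum case $\ol\rho\equiv 0$, by~\eqref{eq:pres.better}), together with the boundedness of $(\ol\rho,\ol n)$ with $\inf\ol\rho>0$. This gives $|q(\tau;\rho,n\,\vert\,\ol\rho,\ol n)|\leq C(T)\,\eta(\tau;\rho,n\,\vert\,\ol\rho,\ol n)$ on the temporal support of $\varphi$, and together with the cutoff estimates one obtains
\[
|I_R|\leq C\int_0^\infty\varphi\int_{\{R\leq|y|\leq 2R\}}\eta\dd y\dd\tau.
\]

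The main obstacle is the apparent circularity: this bound controls $I_R$ by a tail of $\eta$, yet the global integrability of $\eta$ is precisely what the argument must produce. To break it, I would first establish $\EE\in L^1_\mathrm{loc}([0,\infty))$ via a bootstrap. Setting $f_R(\tau):=\int_\R\zeta_R\,\eta(\tau)\dd y$ and choosing $\varphi$ in the tensorized inequality as a standard approximation of $1_{[0,T]}$ (in the spirit of~\cite[Lemma~2.5]{EitLas24envarhyp}), one arrives for a.a.~$T>0$ at
\[
f_R(T)\leq f_R(0)+\|\xi\|_{L^1([0,T]\times\R)}+C\int_0^T f_{2R}(\tau)\dd\tau,
\]
using $\int_{\{R\leq|y|\leq 2R\}}\eta=f_{2R}-f_R$ and dropping the nonnegative dissipation. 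Monotone convergence in $R$ (both $f_R(T)\nearrow\EE(T)$ and $f_{2R}(\tau)\nearrow\EE(\tau)$) gives the integral inequality $\EE(T)\leq\EE(0)+C_T+C\int_0^T\EE(\tau)\dd\tau$; a further integration in $T$ combined with Fubini yields $h(T):=\int_0^T\EE\dd\tau\leq C_T T+CT\,h(T)$, so $h(T)<\infty$ for $T<1/C$, and a short-time iteration starting from $\EE(0)<\infty$ delivers $\EE\in L^1_\mathrm{loc}([0,\infty))$.

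With this a priori integrability at hand, I would pass to the limit $R\to\infty$ in the tensorized inequality: the entropy and dissipation integrals on the left converge by monotone convergence; the term $\int(-\varphi')\int\zeta_R\eta$ converges by dominated convergence with dominating function $|\varphi'|\EE\in L^1$; the initial and $\xi$-terms converge by monotone and dominated convergence respectively (using $\EE(0)<\infty$ and $\xi\in L^1_\mathrm{loc}([0,\infty);L^1(\R))$); and $I_R\to 0$ by dominated convergence, since $\int_{|y|\geq R}\eta\dd y\to 0$ for a.a.~$\tau$ and is dominated by $\EE\in L^1_\mathrm{loc}$. The resulting inequality is exactly~\eqref{eq:relent.total.weak}.
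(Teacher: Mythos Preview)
Your reduction to Proposition~\ref{prop:relent.weak.local} and your invocation of Lemma~\ref{lem:entropyflux.control} are both on target, but the bootstrap you propose to break the circularity does not close. From $f_R(T)\le f_R(0)+C_T+C\int_0^T f_{2R}$ you pass to the limit $R\to\infty$ by monotone convergence and obtain $\EE(T)\le \EE(0)+C_T+C\int_0^T\EE$ as an inequality in $[0,\infty]$; after integrating you write $h(T)\le C_T T+CT\,h(T)$ and conclude $h(T)<\infty$ for $T<1/C$. This last implication is exactly where the argument fails: if $h(T)=+\infty$, the inequality reads $+\infty\le+\infty$ and yields nothing, so you cannot subtract $CT\,h(T)$ from both sides. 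The step presupposes the very finiteness it is meant to establish. (A genuine repair would be to iterate the $f_R\le A+C\int f_{2R}$ estimate $n$ times before passing to the limit, using that $\eta(\tau;\rho,n\,|\,\ol\rho,\ol n)$ is pointwise bounded on $[0,T]\times\R$ by~\ref{item:E1} to control $f_{2^nR}\le C(T)\,2^nR$, so that the remainder $\frac{(2CT)^n}{n!}\cdot 2^nR\to 0$; but this is not what you wrote.)

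The paper avoids the circularity altogether by choosing a \emph{time-dependent} cutoff of characteristic-cone type, $\psi(\tau,y)=\max\{0,\min\{1,\,M\ee^{M(T-\tau)}-|y|\}\}$, instead of a static radial $\zeta_R$. The point is that $\psi_\tau$ contributes an additional positive term $M^2\ee^{M(T-\tau)}\ol\eta$ on the transition set, which---for $M$ chosen large via Lemma~\ref{lem:entropyflux.control} so that $|\ol q|\le\frac{M^2}{2}\ol\eta$---dominates both the drift contribution $\frac{|y|}{2}\ol\eta$ and the flux contribution. Hence the entire boundary integral has a \emph{sign} and can be discarded, with no need for any a priori integrability of $\EE$. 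One then lets $M\to\infty$; monotone convergence handles the remaining terms (first for nonincreasing $\varphi$, which already yields $\EE\in L^1_{\mathrm{loc}}$, and then for general $\varphi$).
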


\begin{proof}
   One readily shows that~\eqref{eq:relen.weak.local}
   also holds if $\chi\geq 0$ is merely Lipschitz continuous
   with compact support in $[0,\infty) \times \R$.
   We can thus choose $\chi(\tau,y)=\varphi(\tau) \psi(\tau,y)$,
   where $\varphi\in C^1_c([0,\infty))$ with $\varphi\geq 0$
   and  $\supp\varphi\subset[0,T)$,
   and
   \begin{equation}
   \label{eq:testfct.special}
   \psi(\tau,y)=
       \begin{cases}
           1 & \text{if } |y|\leq  M\ee^{M(T-\tau)}-1 , \\
           M\ee^{M(T-\tau)}-|y| &\text{if }y\in A,\\
           0 & \text{if }|y|\geq M\ee^{M(T-\tau)},
       \end{cases}
   \end{equation}
   for $M>1$, where
   $A:=\{y\in \R \mid M\ee^{M(T-\tau)}-1<|y|<M\ee^{M(T-\tau)}\}$.
   We set $\ol\eta(\tau)=\eta (\tau; \rho(\tau),n(\tau)\,\vert\,\ol\rho(\tau),\ol n(\tau)) $
   and $\ol q(\tau)=q (\tau; \rho(\tau),n(\tau)\,\vert\,\ol\rho(\tau),\ol n(\tau))$.
   Then~\eqref{eq:relen.weak.local} reduces to
   \begin{equation}
   \begin{aligned}
   &\int_0^\infty\!\!\!\int_\R -  \varphi_{\tau}\psi\, \ol\eta  
   +\varphi\psi\Big(\frac{1}{2}\ol\eta 
   + \alpha \rho \Big\vert \frac n\rho {-} \frac{\ol n}{\ol\rho} \Big\vert^2\Big)
   \dd y\dd\tau
   \\
   &\quad+ \int_0^\infty\!\!\!\int_{A}\varphi \Big(
   M^2\ee^{M(T-\tau)}\ol\eta 
   -\frac{y}{|y|}\frac{y}{2}\ol\eta 
   +\frac{y}{|y|} \ol q \Big)\dd y \dd \tau
   \\
   &\qquad
   \leq \int_\R \varphi(0)\psi(0,y)\,\ol\eta(0)\dd y
   + \int_0^\infty\!\!\!\int_\R
   \varphi\psi \, \xi
   \dd y \dd\tau.
   \end{aligned}
   \label{eq:relen.weak.cutofftest}
   \end{equation}
   In virtue of Lemma~\ref{lem:entropyflux.control} 
   and the boundedness of $\frac{n}{\rho}$ and $\frac{\ol n}{\ol \rho}$, 
   we can choose $M>1$ so large that
   $| \ol q(\tau)|\leq \frac{M^2}{2} \ol\eta(\tau)$
   for $\tau\in(0,T)$.
   With this choice, the integral over $A$ in~\eqref{eq:relen.weak.cutofftest}
   is nonnegative and can be omitted.
   Passing to the limit $M\to\infty$ in the resulting inequality,
   we conclude~\eqref{eq:relent.total.weak}.
\end{proof}

\begin{remark}\label{rem:E1.implies.E2}
    The particular choice $(\ol\rho,\ol n)=(\rho_\infty,0)$ for 
    some $\rho_\infty \in C^1(\R)$ satisfying~\eqref{eq:rhoinfty} is admissible
    in Theorem~\ref{thm:relentr.weak.0},
    which results in the relative entropy inequality~\eqref{eq:weakEnt.total.rhoinf}.
    This shows that criterion~\ref{item:E1} implies criterion~\ref{item:E2}.
\end{remark}

In Theorem~\ref{thm:relentr.weak.0},
the reference pair $(\ol\rho,\ol n)$ has to satisfy~\eqref{eq:relentr.weak.E1.cond},
which involves relations between $(\ol\rho,\ol n)$ and the considered weak solution $(\rho,n)$.
In the case of a polytropic gas, we can also derive 
conditions independent of $(\rho,n)$, which are more preferable.

\begin{theorem}
    \label{thm:relentr.weak.E1}
    Let $\pp$ satisfy~\eqref{eq:pres.gamma},
    and let $(\rho,n)$ be a weak solution 
    to \eqref{eq:Euler.trans}
    fulfilling \ref{item:E1}
    with initial data $(\rho_0,n_0)\in L^1_{\mathrm{loc}}(\R)^2$
    such that $\EE(0;\rho_0,n_0\,\vert \,\hat\rho,0)<\infty$.
    Let $(\ol\rho,\ol n)\in C^1((0,\infty)\times\R)^2\cap C([0,\infty)\times\R)^2$
    such that $\inf\ol\rho>0$
    and 
    \begin{equation}\label{eq:cond.relentr.E1}
    \begin{aligned}
    \ol\rho,\,\ol n,\, \ol R_1, \,\ol R_2
    &\in L^\infty_{\mathrm{loc}}([0,\infty);L^1(\R)\cap L^\infty(\R)),
    \\
    \EE(\tau;\ol\rho,\ol n\,\vert \,\hat\rho,0)&\in L_\mathrm{loc}^\infty([0,\infty)).
    \end{aligned}
    \end{equation}
    Then the relative entropy inequality~\eqref{eq:relent.total.weak} holds.
\end{theorem}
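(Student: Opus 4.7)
The plan is to derive the statement from Theorem~\ref{thm:relentr.weak.0} by verifying its two hypotheses---finiteness of $\EE(0;\rho_0,n_0\,\vert\,\ol\rho(0),\ol n(0))$ and $\xi\in L^1_{\mathrm{loc}}([0,\infty);L^1(\R))$---under the assumptions~\eqref{eq:cond.relentr.E1}. The point is that~\eqref{eq:cond.relentr.E1} only involves relations between $(\ol\rho,\ol n)$ and the fixed end-states $\hat\rho$, not the particular solution $(\rho,n)$, so the comparison lemmata of Section~\ref{sec:preparatory} are needed to bridge the gap.

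For the initial finiteness, I would apply Proposition~\ref{prop:relent.difference}(i) with $(\rho_1,n_1)=(\hat\rho,0)$ and $(\rho_2,n_2)=(\ol\rho(0),\ol n(0))$. Its hypotheses reduce to $\ol n(0)/\ol\rho(0)\in L^2\cap L^\infty$ and $h'(\hat\rho)-h'(\ol\rho(0))\in L^2\cap L^\infty$. The former is immediate from $\ol n(0)\in L^1\cap L^\infty$ and $\inf\ol\rho>0$; the latter follows from Lipschitz continuity of $h'$ on the bounded range of $\hat\rho$ and $\ol\rho(0)$ (bounded away from $0$ and from above) together with $\ol\rho(0)-\hat\rho\in L^2$. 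This last membership comes from the hypothesis $\EE(\tau;\ol\rho,\ol n\,\vert\,\hat\rho,0)\in L^\infty_{\mathrm{loc}}$ evaluated at $\tau=0$ and the equivalence $h(a\,\vert\,b)\asymp(a-b)^2$ on compact subintervals of $(0,\infty)$, itself a consequence of~\eqref{eq:relH.est.sqrt} and the uniform lower bound on $\ol\rho$. The Proposition then reduces $\EE(0;\rho_0,n_0\,\vert\,\ol\rho(0),\ol n(0))<\infty$ to $\EE(0;\hat\rho,0\,\vert\,\ol\rho(0),\ol n(0))<\infty$, which is verified by direct computation using the same quadratic comparison.

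For $\xi=\xi_1+\xi_2+\xi_3\in L^1_{\mathrm{loc}}([0,\infty);L^1(\R))$, I would use Lemmata~\ref{le:xi.1.bound}--\ref{le:xi.3.bound} to obtain pointwise bounds of the form (coefficient depending on $(\ol\rho,\ol n)$) $\times\,\eta(\tau;\rho,n\,\vert\,\ol\rho,\ol n)\,+\,$(locally integrable remainder). The coefficients in front of $\eta$ in the bounds of $\xi_2,\xi_3$ involve $|\ol R|/\ol\rho$, $|\ol R_1|/\ol\rho$ and $\ol\rho\,|h''(\ol\rho)\ol R_1|$ and are locally bounded by~\eqref{eq:cond.relentr.E1} combined with $\inf\ol\rho>0$; the remainder terms $\ee^{-\tau/2}|\ol R|$ and $\ol\rho|h''(\ol\rho)\ol R_1|$ are in $L^1_{\mathrm{loc}}([0,\infty);L^1(\R))$ for the same reasons. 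To integrate the $\eta$-factor, I would first observe that~\ref{item:E1} implies~\ref{item:E2} (Remark~\ref{rem:E1.implies.E2}), which after a Gronwall argument yields $\EE(\tau;\rho,n\,\vert\,\rho_\infty,0)\in L^\infty_{\mathrm{loc}}([0,\infty))$ for a fixed smooth reference density $\rho_\infty$; then Proposition~\ref{prop:relent.difference}(ii) transfers this to $\EE(\tau;\rho,n\,\vert\,\ol\rho,\ol n)\in L^\infty_{\mathrm{loc}}([0,\infty))$, its compatibility conditions being verified from $\EE(\tau;\ol\rho,\ol n\,\vert\,\hat\rho,0)\in L^\infty_{\mathrm{loc}}$ and the bounds in~\eqref{eq:cond.relentr.E1}.

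The delicate point is the coefficient $(\ol n/\ol\rho)_y$ appearing in the bound for $\xi_1$ from Lemma~\ref{le:xi.1.bound}, since~\eqref{eq:cond.relentr.E1} does not directly bound this derivative. The plan here is to exploit the exponential prefactor $\ee^\tau$ multiplying $\pp(\ol\rho)_y+\alpha\ol n$ in the definition~\eqref{eq:residuals} of $\ol R_2$: together with local boundedness of $\ol R_2$, $\ol n_\tau$, $\ol n_y$, $\ol n$, and $(\ol n^2/\ol\rho)_y$ coming from~\eqref{eq:cond.relentr.E1} and $\inf\ol\rho>0$, this forces $\pp(\ol\rho)_y+\alpha\ol n=O(\ee^{-\tau})$ locally in $L^\infty$, i.e., $\ol n$ is asymptotically close to the Darcy velocity $-\pp(\ol\rho)_y/\alpha$. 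Combined with the $C^1$-regularity of $\ol\rho$ and the uniform lower bound $\inf\ol\rho>0$, this gives the local control of $(\ol n/\ol\rho)_y$ needed to turn Lemma~\ref{le:xi.1.bound} into $\xi_1\in L^1_{\mathrm{loc}}([0,\infty);L^1(\R))$. Once all three error terms are controlled, Theorem~\ref{thm:relentr.weak.0} applies and yields the relative entropy inequality~\eqref{eq:relent.total.weak}.
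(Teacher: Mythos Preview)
Your overall strategy coincides with the paper's: reduce to Theorem~\ref{thm:relentr.weak.0} by verifying its two hypotheses, using Remark~\ref{rem:E1.implies.E2} to get $\EE(\tau;\rho,n\,\vert\,\rho_\infty,0)\in L^\infty_{\mathrm{loc}}$, transferring this to $\EE(\tau;\rho,n\,\vert\,\ol\rho,\ol n)\in L^\infty_{\mathrm{loc}}$ via Proposition~\ref{prop:relent.difference}, and controlling $\xi$ through Lemmata~\ref{le:xi.1.bound}--\ref{le:xi.3.bound}. Your treatment of the initial entropy and of $\xi_2,\xi_3$ is correct and matches the paper (which is far terser).

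There is, however, a genuine gap in your handling of $\xi_1$. You assert that ``local boundedness of $\ol R_2$, $\ol n_\tau$, $\ol n_y$, $\ol n$, and $(\ol n^2/\ol\rho)_y$'' follows from~\eqref{eq:cond.relentr.E1}, and then use this together with the factor $\ee^\tau$ to force $\pp(\ol\rho)_y+\alpha\ol n=O(\ee^{-\tau})$. But~\eqref{eq:cond.relentr.E1} contains \emph{no} information on $\ol n_\tau$ or $\ol n_y$ individually; only the residual combinations $\ol R_1,\ol R_2$ are assumed to lie in $L^\infty_{\mathrm{loc}}(L^1\cap L^\infty)$, and these mix space and time derivatives with the unbounded weight $y/2$. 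Your argument is therefore circular: isolating $\pp(\ol\rho)_y$ from $\ol R_2$ already requires the very derivative bounds you are trying to establish. Even if you could conclude $\ol\rho_y\in L^\infty$, extracting $\ol n_y$ from $\ol R_1=\ol\rho_\tau-\tfrac{y}{2}\ol\rho_y+\ol n_y$ would still need separate control of $\ol\rho_\tau$ and of $\tfrac{y}{2}\ol\rho_y$. And the asymptotic $O(\ee^{-\tau})$ statement says nothing near $\tau=0$, where the bound is equally needed.

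The paper's own proof glosses over this point entirely---it simply cites the three lemmata and~\eqref{eq:cond.relentr.E1} and asserts $\xi\in L^1_{\mathrm{loc}}(L^1)$---so the stated hypotheses~\eqref{eq:cond.relentr.E1} may in fact be slightly too weak for the general claim. In the only application (Theorem~\ref{thm:Conv} with $(\ol\rho,\ol n)=(\rho^*,n^*)$), the profile decay from Theorem~\ref{thm:profile} supplies $(\ol n/\ol\rho)_y\in L^1\cap L^\infty$ directly, so no difficulty arises there; but as a proof of Theorem~\ref{thm:relentr.weak.E1} in the generality stated, your argument for $\xi_1$ does not close.
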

\begin{proof}
    As observed in Remark~\ref{rem:E1.implies.E2},
    criterion~\ref{item:E2} is satisfied,
    so that
    $\EE(\tau;\rho, n\,\vert \,\rho_\infty,0)\in L_\mathrm{loc}^\infty([0,\infty))$ for some $\rho_\infty \in C^1(\R)$ satisfying~\eqref{eq:rhoinfty}.
    From~\eqref{eq:cond.relentr.E1} and Proposition~\ref{prop:relent.difference}
    we further obtain
    $\EE(\tau;\rho, n\,\vert \,\ol\rho,\ol n)\in L_\mathrm{loc}^\infty([0,\infty))$.
    Using now~\eqref{eq:cond.relentr.E1}
    and the estimates from Lemmas~\ref{le:xi.1.bound}, \ref{le:xi.2.bound} and \ref{le:xi.3.bound},
    we obtain $\xi \in L^1_{\mathrm{loc}}([0,\infty);L^1(\R))$.
    From the finiteness of $\EE(0;\rho_0,n_0\,\vert \,\hat\rho,0)$ and $\EE(0;\ol\rho(0),\ol n(0) \, \vert \, \hat\rho,0)$, we further conclude
    $\EE(0;\rho_0,n_0 \, \vert \, \ol\rho(0),\ol n(0))<\infty$
    by Proposition~\ref{prop:relent.difference}.
    Now the statement follows from Theorem~\ref{thm:relentr.weak.0}.        
\end{proof}

\subsection{Weak solutions satisfying~\ref{item:E2}}

We now address the relative entropy inequality for weak solutions that are subject to criterion~\ref{item:E2}.
First we consider spatially localized pairs $(\ol\rho,\ol n)$.

\begin{lemma} \label{le:relentr.E2.compact.supp}
    Let $\pp\in C([0,\infty))\cap C^1(0,\infty)$ satisfy $\pp'>0$.
    Let $(\rho,n)$ be a weak solution 
    to \eqref{eq:Euler.trans} fulfilling~\ref{item:E2}
    with initial data $(\rho_0,n_0)\in L^1_{\mathrm{loc}}(\R)^2$
    such that $\EE(0;\rho_0,n_0\,\vert \,\hat\rho,0)<\infty$.
    Let either $(\ol\rho,\ol n)\in C^1((0,\infty)\times\R)^2\cap C([0,\infty)\times\R)^2$ such that
    $\ol\rho> 0$,
     or let $(\ol\rho,\ol n)\equiv(0,0)$ and assume 
   that $\pp$ satisfies~\eqref{eq:pres.better}.
   Further assume that
   $\ol\rho-\hat\rho$ and $\ol n$ have compact support in space.
   Then the relative entropy inequality~\eqref{eq:relent.total.weak} holds.
\end{lemma}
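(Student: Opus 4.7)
The plan is to mirror the derivation of Proposition~\ref{prop:relent.weak.local}, but without access to the local entropy inequality~\eqref{eq:weakEnt.scaled}. Instead, we start from~\ref{item:E2}, which already encodes the relative entropy inequality~\eqref{eq:weakEnt.total.rhoinf} for the specific reference $(\rho_\infty,0)$, and we add suitable corrections obtained from the weak continuity and momentum equations in order to shift the reference from $(\rho_\infty,0)$ to $(\ol\rho,\ol n)$. The compact-support hypothesis on $\ol\rho-\hat\rho$ and $\ol n$ is precisely what guarantees that these correction tests can be taken compactly supported in space, so that no information beyond~\ref{item:E2} is required.

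Concretely, fix $R>0$ large enough that $\ol\rho=\hat\rho$, $\ol n=0$, and $\rho_\infty=\hat\rho$ for $|y|\geq R$. For any $\varphi\in C^1_c([0,\infty);[0,\infty))$, I would test~\eqref{eq:momeq.weak} against $\psi:=-\varphi(\tau)\,\ee^{-\tau}\ol n/\ol\rho$ and~\eqref{eq:conteq.weak} against
\[
\phi_1:=\tfrac12\varphi(\tau)\,\ee^{-\tau}(\ol n/\ol\rho)^2,
\qquad
\phi_2:=-\varphi(\tau)\bigl(h'(\ol\rho)-h'(\rho_\infty)\bigr).
\]
Since $\ol n/\ol\rho$ and $\ol\rho-\rho_\infty$ are compactly supported in $y$ (and $\inf\ol\rho>0$ rules out singularities of $h'$), all three test functions are Lipschitz with compact support in $[0,\infty)\times\R$, hence admissible after a routine mollification. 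Adding the three resulting identities to~\eqref{eq:weakEnt.total.rhoinf} and using the pointwise decomposition (a specialization of~\eqref{eq:eta.difference})
\[
\eta(\tau;\rho,n\,|\,\ol\rho,\ol n)-\eta(\tau;\rho,n\,|\,\rho_\infty,0)
= -\ee^{-\tau}\tfrac{n\,\ol n}{\ol\rho}+\tfrac12\ee^{-\tau}\rho\Big(\tfrac{\ol n}{\ol\rho}\Big)^2-\bigl(h'(\ol\rho)-h'(\rho_\infty)\bigr)(\rho-\rho_\infty)+C,
\]
where $C=C(\ol\rho,\rho_\infty)$ depends only on the reference data, the left-hand side reassembles into $\EE(\tau;\rho,n\,|\,\ol\rho,\ol n)$. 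The subsequent algebraic rearrangements then proceed verbatim as in the proof of Proposition~\ref{prop:relent.weak.local}: the identities~\eqref{eq:h.properties} collapse the cross terms into either the dissipation $\damp$, the residual $\xi=\xi_1+\xi_2+\xi_3$ from~\eqref{eq:xi.def}, or total $y$-derivatives that vanish thanks to the compact spatial support of the test data. Crucially, the remainder $\int h'(\rho_\infty)_y\bigl[\tfrac{y}{2}(\rho-\rho_\infty)-n\bigr]\dd y$ appearing on the right of~\eqref{eq:weakEnt.total.rhoinf} cancels against the contributions produced by the $\phi_2$-test involving $h'(\rho_\infty)_y$, yielding precisely~\eqref{eq:relent.total.weak}.

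The vacuum case $(\ol\rho,\ol n)\equiv(0,0)$ forces $\rho_\pm=0$, hence $\hat\rho\equiv 0$ and $\rho_\infty$ itself has compact support. By Lemma~\ref{lem:pres.better}, assumption~\eqref{eq:pres.better} gives $h'(0)=0$ and the bound $\rho h'(\rho)\leq c\,h(\rho\,|\,0)$, which keeps the residual $\xi$ integrable; under the convention $0/0=0$, the tests $\psi$ and $\phi_1$ drop out and only $\phi_2=\varphi\,h'(\rho_\infty)$ is needed, which is compactly supported. The main obstacle is not conceptual but combinatorial: one must track the $\ee^{\pm\tau}$ prefactors, the intermediate reference $\rho_\infty$, and the $\tau$-dependence of $\ol n/\ol\rho$ simultaneously, and verify that the $\rho_\infty$-dependent quantities produced by $\phi_2$ cancel exactly against the inhomogeneity in~\eqref{eq:weakEnt.total.rhoinf}. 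This cancellation is forced by the independence of the resulting inequality on the particular choice of smoothing $\rho_\infty$, and it is the cleanest manifestation of why criterion~\ref{item:E2} suffices for the relative entropy estimate despite providing only a global rather than local inequality.
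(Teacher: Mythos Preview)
Your proposal is correct and follows essentially the same route as the paper. The paper also starts from~\eqref{eq:weakEnt.total.rhoinf}, tests the weak continuity and momentum equations~\eqref{eq:conteq.weak}--\eqref{eq:momeq.weak} with the same three functions (written there as~\eqref{eq:relen.test1}--\eqref{eq:relen.test3} with a spatial cutoff $\psi$), and adds everything together to obtain an intermediate identity~\eqref{eq:relentr.weak.proof2} before simplifying via integration by parts. Your choice $\phi_2=-\varphi\bigl(h'(\ol\rho)-h'(\rho_\infty)\bigr)$ is a minor but genuine tidying: it is already compactly supported, so you avoid the auxiliary cutoff $\psi$ the paper introduces, and it absorbs the separate identity~\eqref{eq:hrhoinf} into the test itself. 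One caution: the phrase ``proceed verbatim as in the proof of Proposition~\ref{prop:relent.weak.local}'' is slightly off, since that proof has no $\rho_\infty$ and works with a local cutoff $\chi$; the algebra here is analogous but you do need the additional step of handling the $\tau$-derivative of your ``constant'' $C=h(\rho_\infty\,|\,\ol\rho)$ (this is exactly what~\eqref{eq:hrhoinf} encodes in the paper) and the integration-by-parts identity $\int_\R\frac12(h(\rho_\infty)-h(\ol\rho)-h'(\rho_\infty)\rho_\infty+h'(\ol\rho)\ol\rho)\,\d y=\int_\R\frac y2(h'(\rho_\infty)_y\rho_\infty-h'(\ol\rho)_y\ol\rho)\,\d y$ to close the $\rho_\infty$-cancellation you assert at the end.
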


\begin{proof}
Let $\varphi\in C^1_c([0,\infty))$ with $\varphi\geq 0$
and $\supp \varphi\subset [0,T)$,
and let $\rho_\infty$ be the reference density from~\ref{item:E2}.
Since $\ol\rho-\rho_\infty$ has compact support, we obtain the elementary identity
\begin{equation}
\label{eq:hrhoinf}
\begin{aligned}
&\int_0^\infty\!\!\!\int_{\R}-\varphi'\big( h(\rho_\infty)-h(\ol\rho)-h'(\rho_\infty)\rho_\infty+h'(\ol\rho)\ol\rho\big)\dd y \dd\tau
\\
&-\varphi(0)\int_\R h(\rho_\infty)-h(\ol\rho(0))-h'(\rho_\infty)\rho_\infty+h'(\ol\rho(0))\ol\rho(0)\dd y
=\int_0^\infty\!\!\!\int_\R \varphi h''(\ol\rho)\ol\rho\,\ol\rho_\tau\dd y \dd \tau.
\end{aligned}    
\end{equation}
Moreover, there is $M>0$ such that 
$\ol\rho(\tau,y)-\rho_\infty(y)=\ol n(\tau,y)=0$ for $|y|>M$ and $\tau\in[0,T]$.
Let $\psi\in C^1_c(\R)$ with $\psi=1$ in $[-M,M]$ and 
define $\chi(\tau,y)=\varphi(\tau)\psi(y)$.
We use this test function in \eqref{eq:relen.test1},
\eqref{eq:relen.test2} and \eqref{eq:relen.test3},
and we add the resulting identities and~\eqref{eq:hrhoinf} to inequality~\eqref{eq:weakEnt.total.rhoinf},
which yields
\begin{equation}\label{eq:relentr.weak.proof2}
\begin{aligned}
    &\int_0^\infty -\varphi'\calE(\tau;\rho,n\,\vert\,\ol\rho,\ol n)\dd\tau
    -\varphi(0)\calE(0;\rho_0,n_0\,\vert\,\ol\rho(0),\ol n(0))
    \\
    &\qquad
    \leq \int_0^\infty \varphi\big(
    - \frac{1}{2} \calE(\tau;\rho,n\,\vert\,\ol\rho,\ol n) 
    - \damp(\rho,n\,\vert\,\ol\rho,\ol n)
    +\Xi\big)\dd\tau
    \\
    &\qquad
    +\int_0^\infty\!\!\int_\R \frac{1}{2}\varphi\big(h(\rho_\infty)-h(\ol\rho)-h'(\rho_\infty)\rho_\infty+h'(\ol\rho)\ol\rho\big)  
    \dd y\dd\tau
    \\
    &\qquad
    -\int_0^\infty\!\!\int_\R 
    \varphi\Big(\frac{\ol n}{\ol\rho}\Big)_y\Big(\pp(\ol\rho)+\pp'(\ol\rho)(\rho-\ol\rho)\Big)
    \dd y\dd\tau
    \\
    &\qquad
    +\int_0^\infty\!\!\int_\R 
    \varphi\Big(\rho \Big(\frac{n}{\rho}-\frac{\ol n}{\ol\rho}\Big)\frac{\pp(\ol\rho)_y}{\ol\rho}
    +h''(\ol\rho)\Big[\frac{y}{2}\ol\rho\,\ol\rho_y+(\rho-\ol\rho)\ol n_y\Big]\Big)
    \dd y\dd\tau
    \\
    &\qquad
    -\int_0^\infty\!\!\int_\R 
    \varphi\Big(
    h'(\rho_\infty)_y \frac{y}{2}\rho_\infty
    + h''(\ol\rho) \ol\rho_y n
    \Big)
    \dd y\dd\tau.
\end{aligned}
\end{equation}
Since $\ol\rho-\rho_\infty$ has compact support in space, 
integration by parts implies
\[
\int_\R \frac{1}{2}\big(h(\rho_\infty)-h(\ol\rho)-h'(\rho_\infty)\rho_\infty+h'(\ol\rho)\ol\rho\big)  
\dd y
= \int_\R \frac{y}{2}\big(h'(\rho_\infty)_y\rho_\infty-h(\ol\rho)_y\ol\rho  \big)
\dd y.
\]
Moreover, from~\eqref{eq:h.properties} we conclude
$\pp(\ol\rho)_y=h''(\ol\rho)\ol\rho\,\ol\rho_y$
and
\[
\begin{aligned}
\int_\R\Big(\frac{\ol n}{\ol\rho}\Big)_y\Big(\pp(\ol\rho)+\pp'(\ol\rho)(\rho-\ol\rho)\Big)
+\rho\frac{\ol n}{\ol\rho^2}\pp(\ol\rho)_y
-h''(\ol\rho)(\rho-\ol\rho)\ol n_y\dd y
= \int_\R \Big(\frac{\ol n}{\ol\rho}\pp(\ol\rho)\Big)_y \dd y 
=0
\end{aligned}
\]
since $\ol n$ is compactly supported in space.
Now,~\eqref{eq:relentr.weak.proof2} simplifies 
and we arrive at~\eqref{eq:relent.total.weak}.
\end{proof}

To extend the class of admissible functions $(\ol \rho,\ol n)$,
we first use Lemma~\ref{lem:coercivity} to obtain 
function spaces that contain the weak solutions
satisfying~\ref{item:E2}
if $\pp$ is of the form~\eqref{eq:pres.gamma}.

\begin{proposition}
    \label{prop:weaksol.spaces}
    Let $\pp$ satisfy~\eqref{eq:pres.gamma}, and let 
    either $\rho_+=\rho_-=0$ or $\rho_+,\rho_->0$.
    Let
    $(\rho,n)$ be a weak solution to~\eqref{eq:Euler.trans}
    satisfying~\ref{item:E2}
    with initial data $(\rho_0,n_0)\in L^1_{\mathrm{loc}}(\R)^2$
    such that $\EE(0;\rho_0,n_0\,\vert \,\hat\rho,0)<\infty$.
    Then
    \[
    \begin{aligned}
    \rho-\hat\rho&\in L^\infty(0,\infty; L^2(\R)+L^\gamma(\R)),
    \\
    \ee^{-\tau/2}n &\in L^\infty(0,\infty;L^2(\R)+L^{2\gamma/(\gamma+1)}(\R)).
    \end{aligned}
    \]
\end{proposition}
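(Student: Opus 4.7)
The plan is to establish a uniform-in-$\tau$ bound for the total relative entropy $\EE(\tau):=\EE(\tau;\rho,n\,\vert\,\rho_\infty,0)$, with $\rho_\infty$ as provided by criterion~\ref{item:E2}, and then to convert this bound into the claimed function-space memberships via the pointwise coercivity of Lemma~\ref{lem:coercivity}. Proposition~\ref{prop:relent.difference} transfers the assumed finiteness of $\EE(0;\rho_0,n_0\,\vert\,\hat\rho,0)$ to $\EE(0)<\infty$, since $\rho_\infty-\hat\rho$ is bounded with compact support. The pointwise form of~\ref{item:E2} with $\sigma=0$ then provides
\[
\EE(\tau)+\int_0^\tau\Bigl(\tfrac{1}{2}\EE(\tau')+\int_\R\alpha\tfrac{n^2}{\rho}\dd y\Bigr)\dd\tau'
\leq \EE(0)+\int_0^\tau F(\tau')\dd\tau',
\]
with forcing $F(\tau')=\int_\R h'(\rho_\infty)_y\bigl(\tfrac{y}{2}(\rho-\rho_\infty)-n\bigr)\dd y$, supported in $\{|y|\leq R\}$.

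The treatment of $F$ depends on the asymptotic values. If $\rho_+=\rho_-$, I choose $\rho_\infty$ constant (zero if $\rho_\pm=0$, or $\rho_+$ if $\rho_+>0$); then $h'(\rho_\infty)_y\equiv 0$, $F\equiv 0$, and a direct Gronwall argument yields $\EE(\tau)\leq\EE(0)\ee^{-\tau/2}$. In the remaining case $\rho_+\neq\rho_-$ (both positive), I split $F=F_1+F_2$. The $\rho$-part $F_1$ is estimated by decomposing $[-R,R]=\{\rho\leq r_0\}\cup\{\rho>r_0\}$ and invoking Lemma~\ref{lem:coercivity} with $\ol\rho=\rho_\infty\in[\delta,M]$, combined with Young's inequality, to reach $|F_1|\leq\tfrac{1}{8}\EE(\tau')+C_1$. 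For the $n$-part I would use Cauchy--Schwarz,
\[
\bigl|F_2\bigr|\leq \Bigl(\int|h'(\rho_\infty)_y|^2\rho\dd y\Bigr)^{1/2}\Bigl(\int\tfrac{n^2}{\rho}\dd y\Bigr)^{1/2},
\]
where the first factor is controlled by $C(1+\EE^{1/\gamma})$ via the coercivity bound on $\int_{-R}^R\rho\dd y$; Young's inequality then gives $|F_2|\leq\tfrac{\alpha}{2}\int n^2/\rho\dd y+\tfrac{1}{8}\EE+C_2$. Substituting back leads to $\EE(\tau)+\tfrac{1}{4}\int_0^\tau\EE\,\dd\tau'\leq \EE(0)+C\tau$, which via the standard Gronwall argument furnishes the uniform bound $\EE(\tau)\leq \EE(0)+4C$.

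Once $\EE(\tau)$ is uniformly bounded, I apply Lemma~\ref{lem:coercivity} pointwise in $y$ and split $\R$ into $\{\rho(\tau,y)\leq r_0\}$ and $\{\rho(\tau,y)>r_0\}$. On the first subset the coercivity yields $|\rho-\rho_\infty|^2,\ \ee^{-\tau}|n|^2\lesssim\eta$, hence the corresponding truncations belong to $L^2(\R)$; on the second subset $|\rho-\rho_\infty|^\gamma,\ (\ee^{-\tau}|n|^2)^{\gamma/(\gamma+1)}\lesssim\eta$, placing the truncations in $L^\gamma(\R)$ and $L^{2\gamma/(\gamma+1)}(\R)$ respectively. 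Since $\rho_\infty-\hat\rho$ is bounded with compact support it lies in $L^2(\R)\cap L^\gamma(\R)$, so the asserted memberships $\rho-\hat\rho\in L^\infty(0,\infty;L^2(\R)+L^\gamma(\R))$ and $\ee^{-\tau/2}n\in L^\infty(0,\infty;L^2(\R)+L^{2\gamma/(\gamma+1)}(\R))$ follow. In the case $\rho_\pm=0$ one instead takes $\rho_\infty\equiv 0$ and uses the second alternative of Lemma~\ref{lem:coercivity} (with $r_0=0$), for which the extra condition~\eqref{eq:pres.better} holds automatically for polytropic $\pp$ with $\gamma>1$.

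The main obstacle is the bound on the forcing $F_2$: the intrinsic scaling $n\sim\ee^{\tau/2}$ in Lemma~\ref{lem:coercivity} precludes any pointwise $L^p$ estimate for $n$ that is uniform in $\tau$, so the $\ee^{\tau/2}$ growth must be cancelled by absorbing $F_2$ into the frictional dissipation $\int\alpha n^2/\rho\dd y$ rather than controlling $\|n\|_{L^p}$ directly. For $\gamma=1$ the coercivity of Lemma~\ref{lem:coercivity} is only linear in $\eta$ on the set $\{\rho>r_0\}$, and one must additionally stretch $\rho_\infty$ over a large interval to make $\|h'(\rho_\infty)_y\|_{L^\infty}$ small enough for the absorption to close.
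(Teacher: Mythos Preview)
Your approach is correct and follows the paper's overall strategy: obtain a bound on $\EE(\tau;\rho,n\,\vert\,\rho_\infty,0)$ from~\ref{item:E2}, then split $\R$ into $\{\rho\leq r_0\}$ and $\{\rho>r_0\}$ via Lemma~\ref{lem:coercivity} and adjust by $\rho_\infty-\hat\rho\in L^2\cap L^\gamma$. The paper's own proof is far terser than yours: it first invokes Lemma~\ref{le:relentr.E2.compact.supp} to justify replacing the $\rho_\infty$ furnished by~\ref{item:E2} with one satisfying $\rho_\infty\equiv0$ or $\inf\rho_\infty>0$, and then simply asserts that $\EE(\tau;\rho,n\,\vert\,\rho_\infty,0)\in L^\infty(0,\infty)$ ``by~\ref{item:E2}'' before applying the coercivity lemma. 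Your Gronwall argument---bounding the forcing $F$ by a fraction of $\EE$ and absorbing the $n$-part into the frictional dissipation $\alpha\int n^2/\rho$---is a legitimate way to substantiate that assertion; the paper apparently regards this passage as routine and does not spell it out.

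One small point: when you write ``I choose $\rho_\infty$ constant'' in the coincident-limits case, you are tacitly replacing the $\rho_\infty$ given by hypothesis~\ref{item:E2} with another reference density. That step is exactly what the paper handles by citing Lemma~\ref{le:relentr.E2.compact.supp}; you should cite it as well rather than relying only on Proposition~\ref{prop:relent.difference}, which transfers finiteness of $\EE$ but not the full inequality~\eqref{eq:weakEnt.total.rhoinf}.
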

\begin{proof}
    In virtue of Lemma~\ref{le:relentr.E2.compact.supp},
    we can assume that $\rho_\infty\equiv 0$ or $\inf\rho_\infty>0$.
    Since we have 
    $\EE(\tau;\rho,n\,\vert\,\rho_\infty,0)\in L^\infty(0,\infty)$
    by~\ref{item:E2}, the coercivity estimate from Lemma~\ref{lem:coercivity}
    with $K=\rho_\infty(\R)$
    yields the claim since $\hat\rho-\rho_\infty\in L^2(\R)+L^\gamma(\R)$.    
\end{proof}

We can now extend the result of Lemma~\ref{le:relentr.E2.compact.supp}
to more general pairs $(\ol\rho,\ol n)$.

\begin{theorem}\label{thm:relentr.weak.E2}
	Let $\pp$ satisfy~\eqref{eq:pres.gamma}.
    Let $(\rho,n)$ be a weak solution 
    to \eqref{eq:Euler.trans} fulfilling~\ref{item:E2}
    with initial data $(\rho_0,n_0)\in L^1_{\mathrm{loc}}(\R)^2$
    such that $\EE(0;\rho_0,n_0\,\vert \,\hat\rho,0)<\infty$.
    Let $(\ol\rho,\ol n)\in C^1((0,\infty)\times\R)^2\cap C([0,\infty)\times\R)^2$ be such that
    $\inf\ol\rho>0$ and
    \begin{equation}
    \label{eq:cond.relentr.E2}
    \begin{aligned}
    \ol\rho_y, \ \ol n_y, \ 
    \ol \rho_\tau,\ \ol n_\tau, \ 
    y(\ol\rho-\hat\rho), \ y\,\ol n
    &\in L^\infty_{\mathrm{loc}}([0,\infty);L^1(\R)\cap L^\infty(\R)).
    \end{aligned}
    \end{equation}
    Then the relative entropy inequality~\eqref{eq:relent.total.weak}
    holds.
\end{theorem}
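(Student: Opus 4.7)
The plan is to reduce to Lemma~\ref{le:relentr.E2.compact.supp} via a spatial truncation of $(\ol\rho,\ol n)$. Since $\inf\ol\rho>0$ together with~\eqref{eq:cond.relentr.E2} forces $\rho_\pm>0$, we can fix $\rho_\infty\in C^1(\R)$ satisfying~\eqref{eq:rhoinfty} with $\inf\rho_\infty>0$. Choose plateau cutoffs $\psi_k\in C_c^\infty(\R)$ with $0\leq\psi_k\leq 1$, $\psi_k\equiv 1$ on $[-k,k]$, $\supp\psi_k\subset(-2k,2k)$ and $|\psi_k'|\leq C/k$, and set
\[
\ol\rho_k := \rho_\infty + \psi_k(\ol\rho-\rho_\infty), \qquad \ol n_k := \psi_k\,\ol n.
\]
Then $(\ol\rho_k,\ol n_k)\in C^1((0,\infty)\times\R)^2\cap C([0,\infty)\times\R)^2$ with $\inf\ol\rho_k>0$ uniformly in $k$, while $\ol\rho_k-\hat\rho = (\rho_\infty-\hat\rho) + \psi_k(\ol\rho-\rho_\infty)$ and $\ol n_k$ are both compactly supported in $y$. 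Hence Lemma~\ref{le:relentr.E2.compact.supp} gives~\eqref{eq:relent.total.weak} with $(\ol\rho,\ol n)$ replaced by $(\ol\rho_k,\ol n_k)$ for every $k$, and the proof reduces to passing to the limit $k\to\infty$.

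Pointwise $\ol\rho_k\to\ol\rho$ and $\ol n_k\to\ol n$, and a direct computation gives the decomposition
\[
\ol R_1^k = \psi_k\,\ol R_1 - \tfrac{y}{2}(1-\psi_k)(\rho_\infty)_y - \tfrac{y}{2}\psi_k'(\ol\rho-\rho_\infty) + \psi_k'\,\ol n,
\]
and an analogous one for $\ol R_2^k$ that additionally uses $\ee^\tau\pp(\ol\rho_k)_y$. Since $\psi_k'$ is supported on $\{k<|y|<2k\}$ and $\|y(\ol\rho-\hat\rho)\|_{L^\infty},\|y\ol n\|_{L^\infty}$ are bounded by~\eqref{eq:cond.relentr.E2}, one has pointwise bounds like $|y\psi_k'(\ol\rho-\rho_\infty)|\leq C/k\cdot\|y(\ol\rho-\hat\rho)\|_{L^\infty}$ on $\{k<|y|<2k\}$, whose $L^1(\R)$ norm is $O(1/k)$. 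Consequently $\ol R_j^k\to\ol R_j$ in $L^\infty_{\mathrm{loc}}([0,\infty);L^1(\R)\cap L^\infty(\R))$. To transfer pointwise limits into integral limits, one builds uniform-in-$k$ dominating functions: for the relative entropy and initial term, Proposition~\ref{prop:weaksol.spaces} provides $\rho-\hat\rho\in L^\infty_\tau(L^2{+}L^\gamma)$ and $\ee^{-\tau/2}n\in L^\infty_\tau(L^2{+}L^{2\gamma/(\gamma+1)})$, which paired with~\eqref{eq:cond.relentr.E2} and the algebraic identity~\eqref{eq:eta.difference} with reference $(\rho_\infty,0)$ yields dominated convergence; the damping term is handled analogously via $\rho|n/\rho-\ol n_k/\ol\rho_k|^2$.

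The main technical obstacle lies in the error term $\Xi_k$. The pointwise bounds of Lemmas~\ref{le:xi.1.bound}, \ref{le:xi.2.bound} and \ref{le:xi.3.bound} estimate $|\xi_j^k|$ by products of $\eta(\tau;\rho,n\,\vert\,\ol\rho_k,\ol n_k)$ or $h(\rho\,\vert\,\ol\rho_k)$ with factors built from $(\ol n_k/\ol\rho_k)_y$, $\ol R^k$, $\ol R_1^k$ and $h''(\ol\rho_k)\ol R_1^k$. Exploiting $\inf\ol\rho_k>0$ uniformly in $k$ and the inheritance of the bounds in~\eqref{eq:cond.relentr.E2} for $\ol\rho_k,\ol n_k$ and their derivatives and residuals, these majorants are integrable on $[0,T]\times\R$ uniformly in $k$, so dominated convergence yields $\int\varphi\,\Xi_k\dd\tau\to\int\varphi\,\Xi\dd\tau$ for every $\varphi\in C^1_c([0,\infty);[0,\infty))$. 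Passage to the limit in the inequality produced by Lemma~\ref{le:relentr.E2.compact.supp} then gives~\eqref{eq:relent.total.weak}.
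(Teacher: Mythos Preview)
Your approach is essentially the one in the paper: truncate $(\ol\rho,\ol n)$ to coincide with $(\rho_\infty,0)$ outside a large ball, invoke Lemma~\ref{le:relentr.E2.compact.supp}, and pass to the limit using Proposition~\ref{prop:weaksol.spaces} together with the pointwise bounds of Lemmas~\ref{le:xi.1.bound}--\ref{le:xi.3.bound}. Two points are worth noting. First, the paper uses two cutoffs at different scales, setting $\ol\rho_R=\phi_{2R}(\ol\rho-\rho_\infty)+\rho_\infty$ and $\ol n_R=\phi_R\,\ol n$; this yields the clean identity $\ol n_R/\ol\rho_R=\phi_R\,\ol n/\ol\rho$, which considerably simplifies the derivatives $(\ol n_R/\ol\rho_R)_y$ appearing in $\xi_1$ and in $\ol R_2$. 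Your single-scale choice works too, but the algebra in the transition region becomes messier.

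Second, two of your justifications are imprecise. The pointwise bound $|y\psi_k'(\ol\rho-\rho_\infty)|\leq C/k\,\|y(\ol\rho-\hat\rho)\|_{L^\infty}$ on a set of measure $\sim k$ only gives an $L^1$ norm of order $O(1)$, not $O(1/k)$; you need the $L^1$ assumption on $y(\ol\rho-\hat\rho)$ instead to get the vanishing. More importantly, ``majorants integrable uniformly in $k$'' does not by itself justify dominated convergence for $\Xi_k$: the bounds from Lemmas~\ref{le:xi.1.bound}--\ref{le:xi.3.bound} involve $\eta(\tau;\rho,n\,\vert\,\ol\rho_k,\ol n_k)$, which varies with $k$. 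The paper handles this by observing that these upper bounds converge in $L^1_{\mathrm{loc}}([0,\infty);L^1(\R))$ and then invoking Pratt's (generalised) dominated convergence theorem. Alternatively, one can build a single $k$-independent majorant via identity~\eqref{eq:eta.difference} and the decay of $\ol\rho-\rho_\infty$ and $\ol n$; either route closes the gap.
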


\begin{proof}
Let $\phi \in C_c^\infty([0, \infty))$ be a smooth cut-off function with $\phi \equiv 1 $  on $[0,1]$ and $\phi \equiv 0$ on $[2, \infty)$,
and set $\phi_R(y) := \phi(|y|/R)$ for $R>0$.
We approximate the reference pair $(\ol \rho, \ol n)$ by 
\[
\ol \rho_R := \phi_{2R} \big(\ol \rho {-} \rho_\infty\big) + \rho_\infty
=\phi_{2R} \ol \rho +(1-\phi_{2R})\rho_\infty,
\qquad
\ol n_R := \phi_R \ol n,
\]
so that $\frac{\ol n_R}{\ol \rho_R} = \phi_R \frac{\ol n}{ \ol \rho}$.
By Lemma~\ref{le:relentr.E2.compact.supp}, inequality~\eqref{eq:relent.total.weak} holds 
for $(\ol\rho,\ol n)$ replaced with $(\ol\rho_R,\ol n_R)$.
In order to pass to the limit $R \to \infty$,
we consider the terms in~\eqref{eq:relent.total.weak} separately.
To this end, note that~\eqref{eq:cond.relentr.E2}
implies $\ol\rho-\rho_\infty,\, \frac{y}{2}(\ol\rho-\rho_\infty),\,\ol n,\,\ol R_1,\,\ol R_2\in L^1_{\mathrm{loc}}([0,\infty);L^1(\R)\cap L^\infty(\R))$
and 
$\EE(\tau; \ol \rho, \ol n \, | \, \hat\rho, 0)\in L^\infty_{\mathrm{loc}}([0,\infty))$.
Moreover, this yields
$\EE(\tau; \rho, n \, \vert \, \ol \rho, \ol n )
\in L^\infty_{\mathrm{loc}}([0,\infty))$
and $\EE(0; \rho_0, n_0 \, \vert \, \ol \rho(0), \ol n(0) )
<\infty$
by Proposition~\ref{prop:relent.difference}.

To see the convergence of the relative entropy terms,
we write the kinetic part as
\[
\ee^{-\tau}\frac{\rho}{2}\Big\vert \frac{n}{\rho} -\frac{\ol n_R}{\ol \rho_R} \Big\vert^2
= \ee^{-\tau}\frac{\rho}{2}\Big\vert \frac{n}{\rho} -\frac{\ol n}{\ol \rho} \Big\vert^2
-\ee^{-\tau}(\phi_R-1) n\frac{\ol n}{\ol\rho}
+\ee^{-\tau}(\phi_R^2-1)\frac{\rho}{2}\frac{\ol n^2}{\ol\rho^2},
\]
while the relative potential is subject to
\[
\begin{aligned}
&h(\rho\,\vert\,\ol\rho_R)
=h(\rho\,\vert\,\ol\rho)
\\
&-\int_0^1 h''\big(\ol\rho+\theta(1-\phi_{2R})(\rho_\infty-\ol\rho)\big)
\big(\rho-\ol\rho-\theta(1-\phi_{2R})(\rho_\infty-\ol\rho)\big)
(1-\phi_{2R})(\rho_\infty-\ol\rho)\dd\theta,
\end{aligned}
\]
where we used~\eqref{eq:h.properties}.
Using the integrability properties from Proposition~\ref{prop:weaksol.spaces}
and the assumptions~\eqref{eq:cond.relentr.E2},
we thus conclude
\[
\begin{aligned}
\EE(0; \rho_0, n_0 \, \vert \, \ol \rho_R(0), \ol n_R(0) )
&\to\EE(0; \rho_0, n_0 \, \vert \, \ol \rho(0), \ol n(0) )
&&\quad\text{as }R\to\infty,
\\
\EE(\tau; \rho, n \, \vert \, \ol \rho_R, \ol n_R )
&\to\EE(0; \rho, n \, \vert \, \ol \rho, \ol n )
&\text{in }L^\infty_\mathrm{loc}([0,\infty))
&\quad\text{as }R\to\infty.
\end{aligned}
\]
In the same way, we deduce the convergence of the relative damping term $\damp$.

It thus remains to show the convergence of the terms associated with $\Xi$.
We consider the terms $\ol R_{1,R}$, $\ol R_{2,R}$ and $\xi_R$,
which are defined by replacing $(\ol \rho,\ol n)$ with $(\ol\rho_R,\ol n_R)$ 
in the definitions of $\ol R_1$, $\ol R_2$ and $\xi$
in~\eqref{eq:residuals} and~\eqref{eq:xi.def}.
Then 
\[
\ol R_{1,R}
=\phi_{2R}\ol R_1-\frac{y}{2}\big((\phi_{2R})_y(\ol\rho-\rho_\infty) 
+(1-\phi_{2R})(\rho_\infty)_y\big)
+(\phi_{R}-\phi_{2R})\ol n_y+(\phi_{R})_y\ol n,
\]
so that $\ol R_{1,R}\to\ol R_1$ strongly in $L^\infty_\mathrm{loc}([0,\infty);L^1(\R))$ 
and weakly$^*$ in $L^\infty_\mathrm{loc}([0,\infty);L^\infty(\R))$
as $R\to\infty$
by~\eqref{eq:cond.relentr.E2}.
Similarly, we see
$\ol R_{2,R}\to\ol R_2$ 
as $R\to\infty$
in the same topologies.
Now the estimates from Lemmas~\ref{le:xi.1.bound}, \ref{le:xi.2.bound}, \ref{le:xi.3.bound}
yield an upper bound for $|\xi_R|$
that converges in $L^\infty_\mathrm{loc}([0,\infty);L^1(\R))$ as $R\to\infty$.
Since we also have $\xi_R\to\xi$ pointwise,
we conclude from Pratt's dominated convergence theorem that
$\xi_R\to\xi$ in $L^\infty_\mathrm{loc}([0,\infty);L^1(\R))$ as $R\to\infty$,
which finally yields~\eqref{eq:relent.total.weak}.
\end{proof}

Let us compare the two main results on the relative entropy inequality
from Theorem~\ref{thm:relentr.weak.E1} and Theorem~\ref{thm:relentr.weak.E2}
under the different criteria~\ref{item:E1} and~\ref{item:E2}.
While~\ref{item:E1} is more restrictive than~\ref{item:E2}, 
see Remark~\ref{rem:E1.implies.E2},
the corresponding assumption~\eqref{eq:cond.relentr.E1} on the pair $(\ol\rho,\ol n)$ is slightly more general than~\eqref{eq:cond.relentr.E2}.
Although both assumptions~\eqref{eq:cond.relentr.E1} and~\eqref{eq:cond.relentr.E2} are clearly not optimal,
they are sufficient for our purposes.

\subsection{Weak-strong uniqueness}

The relative entropy inequality~\eqref{eq:relent.total.weak} directly leads to a weak-strong uniqueness result. The proof relies on the following generalized version of Gronwall's lemma in differential form, which enables us to derive a bound on the total relative entropy.
This lemma will also be essential in Section~\ref{se:ConvProof} to prove the main results on exponential convergence with respect to the scaled time $\tau$.

\begin{lemma}[Gronwall inequality]\label{lemma:Gronwall.weak}
    Let $E_0\in\R$ and $E,a,b\in L^1_\mathrm{loc}([0,\infty))$ such that $\tau \mapsto a(\tau)E(\tau) $ belongs to  $ L^1_\mathrm{loc}([0,\infty))$, and
    \[
    \int_0^\infty -\varphi'(\tau) E(\tau) \dd\tau
    \leq E_0\varphi(0)+\int_0^\infty\varphi(\tau) \big( a(\tau) E(\tau) + b(\tau)\big)\dd\tau
    \]
    for all $\varphi\in C^1_c([0,\infty))$ with $\varphi\geq 0$.
    Then
    \[
    E(\tau)\leq E_0 \,\ee^{\int_0^\tau a(\nu)\dd\nu}
    + \int_0^\tau b(\sigma)\,\ee^{\int_\sigma^\tau a(\nu)\dd\nu}\dd\sigma \quad \text{ for a.a.}~\tau >0.
    \]
\end{lemma}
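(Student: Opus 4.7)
My plan is to reduce the weak, differential-form hypothesis to the classical \emph{integral} Gronwall inequality
\[
E(\tau)\leq E_0+\int_0^\tau a(\sigma)E(\sigma)\dd\sigma+\int_0^\tau b(\sigma)\dd\sigma
\qquad\text{for a.a.\ }\tau\geq 0,
\]
after which the stated exponential bound follows from the standard integrated form of Gronwall's lemma for locally integrable $a,b$. This detour avoids using the nonsmooth integrating factor $\ee^{-\int_0^{\cdot}a(\nu)\dd\nu}$ directly inside the weak inequality, which would require a delicate regularization of $a$ since $a\in L^1_\mathrm{loc}$ is possibly unbounded.

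To carry out the reduction, I would introduce the absolutely continuous function
\[
G(\tau):=E_0+\int_0^\tau\bigl(a(\sigma)E(\sigma)+b(\sigma)\bigr)\dd\sigma,
\]
which is well defined by the hypothesis $aE+b\in L^1_\mathrm{loc}$ and satisfies $G(0)=E_0$, $G'=aE+b$ a.e. Integration by parts against $\varphi\in C_c^1([0,\infty))$ yields
\[
E_0\,\varphi(0)+\int_0^\infty\varphi\,(aE+b)\dd\tau
=E_0\,\varphi(0)+\int_0^\infty\varphi\,G'\dd\tau
=-\int_0^\infty\varphi'\,G\dd\tau,
\]
so that the hypothesis collapses to
\[
\int_0^\infty(-\varphi')\,(E-G)\dd\tau\leq 0
\qquad\text{for all }\varphi\in C_c^1([0,\infty)),\ \varphi\geq 0.
\]
A routine mollification argument (smoothing only near the kinks, which can be placed away from $\tau=0$) extends this to nonnegative Lipschitz test functions supported in $[0,\infty)$. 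Applied to the piecewise affine
\[
\psi_n(\tau):=\max\bigl\{0,\,\min\{1,\,n(T+1/n-\tau)\}\bigr\},
\]
which equals $1$ on $[0,T]$ and decays linearly to $0$ on $[T,T+1/n]$, it gives $n\int_T^{T+1/n}(E-G)\dd\tau\leq 0$. Passing to the limit $n\to\infty$ at any common Lebesgue point of $E$ and $G$—that is, for a.a.\ $T>0$—delivers the pointwise bound $E(T)\leq G(T)$, which is the desired integral inequality.

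The only technical subtlety is the extension of the weak inequality to Lipschitz test functions while respecting the support in $[0,\infty)$; this is however standard, relying on dominated convergence and the $L^1_\mathrm{loc}$ integrability of $E,\,aE,\,b$. With the pointwise inequality $E\leq G$ a.e.\ in hand, the classical integral Gronwall lemma produces
\[
E(\tau)\leq E_0\,\ee^{\int_0^\tau a(\nu)\dd\nu}+\int_0^\tau b(\sigma)\,\ee^{\int_\sigma^\tau a(\nu)\dd\nu}\dd\sigma
\qquad\text{for a.a.\ }\tau>0,
\]
as required. The main conceptual obstacle, I believe, is recognizing that one should rewrite the right-hand side of the hypothesis as $-\int\varphi'G\dd\tau$ via integration by parts rather than trying to incorporate the integrating factor into the test function from the outset; the latter route runs into genuine trouble when trying to pass to the limit in a term of the form $\int\psi(a-a_\varepsilon)f_\varepsilon E\dd\tau$ against a merely $L^1_\mathrm{loc}$ function $E$.
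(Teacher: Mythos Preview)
Your reduction to the integral inequality $E(\tau)\leq G(\tau)$ is clean, but the final step contains a genuine gap: the classical integral Gronwall lemma requires $a\geq 0$, and no sign condition on $a$ is assumed here. In fact, in the paper's applications (e.g.\ the proof of Theorem~\ref{thm:Conv}) one uses $a(\tau)=-\tfrac12+\theta+\mu\ee^{-\tau/2}$, which is eventually negative. For signed $a$ the implication you invoke is simply false. A concrete counterexample: take $a\equiv -1$, $b\equiv 0$, $E_0=0$, and
\[
E(\tau)=\begin{cases}-1&\tau\in[0,1),\\[1mm]\tfrac12&\tau\in[1,2),\\[1mm]0&\tau\geq 2.\end{cases}
\]
Then $E(\tau)\leq -\int_0^\tau E\,\dd\sigma$ for every $\tau\geq 0$, yet the desired conclusion $E(\tau)\leq E_0\,\ee^{-\tau}=0$ fails on $[1,2)$. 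The point is that the pointwise integral bound $E\leq G$ loses precisely the information needed when $a$ changes sign: from the weak hypothesis you actually know that $E-G$ is (a.e.\ equal to) a \emph{nonincreasing} function, not merely nonpositive, and it is this monotonicity that compensates for the negative part of $a$.

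The paper instead incorporates the integrating factor directly, testing with $\varphi(\sigma)=\omega_\delta(\sigma)\,\ee^{-\int_0^\sigma a}$; then the $aE$ terms cancel \emph{before} any limit is taken, and one is left with $\int(-\omega_\delta')\ee^{-A}E\leq E_0+\int\omega_\delta\ee^{-A}b$, where only bounded continuous weights multiply $E$ and $b$. Your objection to this route---that one cannot pass to the limit in $\int\psi(a-a_\varepsilon)f_\varepsilon E$ when $E$ is merely $L^1_{\mathrm{loc}}$---is a fair technical point about the density step from $C_c^1$ to $W^{1,1}_c$ test functions, but it is a regularity subtlety rather than a structural obstruction, and it does not arise if one instead uses the monotonicity of $E-G$ you have already established to justify an integration by parts against the (now $BV$) defect $E-G$. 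Either way, the integrating-factor idea cannot be bypassed when $a$ has no sign.
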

\begin{proof}
We test with $\varphi(\sigma):= \omega_\delta(\sigma) \, \ee^{-\int_0^{\sigma} a(\nu)\dd\nu}$, where we choose $\omega_\delta \in C^1_c([0,\infty))$ satisfying $\omega_\delta(\sigma) \equiv 1$ for $\sigma \leq  \tau$ and $\omega_\delta(\sigma) \equiv 0$ for all $\sigma \geq \tau+\delta$ such that $\vert \omega_\delta'(\sigma) + \frac1\delta \vert \leq \delta $
for all $\sigma \in (\tau,\tau+\delta)$.
Then, we obtain
\begin{align*}
  \int_0^{\tau+\delta} - {\omega_\delta'}(\sigma) \, \ee^{-\int_0^{\sigma} a(\nu)\dd\nu} E(\sigma) \dd\sigma 
    \leq E_0 +\int_0^{\tau+\delta}  \omega_\delta(\sigma)\,b(\sigma) \, \ee^{-\int_0^{\sigma} a(\nu)\dd\nu}   \dd\sigma.
\end{align*}
Taking the limit $\delta \searrow 0$ yields 
\begin{align*}
    \ee^{-\int_0^{\tau} a(\nu)\dd\nu} E(\tau) 
    \leq E_0 + \int_0^\tau b(\sigma) \, \ee^{-\int_0^{\sigma} a(\nu)\dd\nu}  \dd \sigma \quad\text{for a.e.~}\tau>0,
\end{align*}
which is equivalent to the desired result.
\end{proof}

This lemma enables us to derive the following weak-strong uniqueness result from the relative entropy inequality~\eqref{eq:relent.total.weak}.

\begin{corollary}
	\label{cor:ws.uniqueness}
    Let $\pp$ satisfy~\eqref{eq:pres.gamma}.
    Consider initial data $(\rho_0,n_0)\in L^1_{\mathrm{loc}}(\R)^2$ 
such that $\EE(0;\rho_0,n_0\,\vert \,\hat\rho,0)<\infty$.
Let $(\rho, n)$ be a weak solution to~\eqref{eq:Euler.trans} satisfying~\ref{item:E1} or~\ref{item:E2}.
    Let $T>0$ and suppose there exists a strong solution $(\ol \rho, \ol n) \in C^1((0,T)\times\R)^2\cap C([0,T)\times\R)^2$ to~\eqref{eq:Euler.trans}, having the same initial data,     
    such that $\inf\ol\rho>0$ and~\eqref{eq:cond.relentr.E2} holds.
    Then $(\rho,n)$ and $(\ol \rho, \ol n)$ coincide almost everywhere in $[0,T] \times \R$.
\end{corollary}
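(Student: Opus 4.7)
The plan is to choose the strong solution $(\ol\rho, \ol n)$ as the reference pair in the relative entropy inequality~\eqref{eq:relent.total.weak} and then collapse $\EE$ to zero by a Gronwall argument. First, since $(\ol\rho, \ol n)$ solves~\eqref{eq:Euler.trans} classically on $(0,T)\times\R$, the residuals $\ol R_1$ and $\ol R_2$ defined in~\eqref{eq:residuals} vanish identically, so inspection of~\eqref{eq:xi.def} shows that $\xi_2\equiv 0$ and $\xi_3\equiv 0$ and only $\xi_1$ contributes to $\Xi$. Lemma~\ref{le:xi.1.bound} then gives the pointwise bound
\[
\Xi(\tau) \leq a(\tau)\,\EE(\tau;\rho,n\,\vert\,\ol\rho,\ol n),
\qquad
a(\tau):=\max\{2,\gamma{-}1\}\,\bigl\|[-(\ol n/\ol\rho)_y(\tau,\cdot)]_+\bigr\|_{L^\infty(\R)}.
\]

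Since~\ref{item:E1} implies~\ref{item:E2} by Remark~\ref{rem:E1.implies.E2}, it suffices to work under criterion~\ref{item:E2}, where the regularity assumptions on $(\ol\rho,\ol n)$ in the corollary are exactly those required by Theorem~\ref{thm:relentr.weak.E2}. Coincidence of initial data yields $\EE(0;\rho_0,n_0\,\vert\,\ol\rho(0),\ol n(0))=0$. Applying~\eqref{eq:relent.total.weak}, dropping the nonnegative term $\damp$, and inserting the bound on $\Xi$, I obtain
\[
\int_0^\infty -\varphi'(\tau)\,\EE(\tau;\rho,n\,\vert\,\ol\rho,\ol n)\dd\tau
\leq \int_0^\infty \varphi(\tau)\,\bigl[a(\tau)-\tfrac12\bigr]\,\EE(\tau;\rho,n\,\vert\,\ol\rho,\ol n)\dd\tau
\]
for every nonnegative $\varphi\in C^1_c([0,T))$. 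Lemma~\ref{lemma:Gronwall.weak}, applied with $E_0=0$ and $b\equiv 0$, then forces $\EE(\tau;\rho,n\,\vert\,\ol\rho,\ol n)=0$ for almost every $\tau\in(0,T)$. Finally, the strict convexity of $h$ together with $\inf\ol\rho>0$ implies $h(\rho\,\vert\,\ol\rho)=0$ only when $\rho=\ol\rho$; combining this with the vanishing of the relative kinetic energy and the strict positivity of $\ol\rho$ gives $n=\ol n$ a.e.\ on $(0,T)\times\R$.

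The main obstacle is the technical verification that $a\in L^1_{\mathrm{loc}}([0,T))$. Writing
$(\ol n/\ol\rho)_y=(\ol n_y\ol\rho-\ol n\ol\rho_y)/\ol\rho^2$,
the assumption $\inf\ol\rho>0$ together with~\eqref{eq:cond.relentr.E2}, which provides $\ol\rho_y,\ol n_y\in L^\infty_{\mathrm{loc}}([0,\infty);L^\infty(\R))$, reduces matters to bounding $\ol n$ in $L^\infty$; this follows from the continuity of $\ol n$ on $[0,T)\times\R$ combined with the decay at infinity encoded by $y\ol n\in L^\infty_{\mathrm{loc}}([0,\infty);L^\infty(\R))$. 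Beyond this verification, the proof is essentially a bookkeeping exercise in the relative entropy machinery already established.
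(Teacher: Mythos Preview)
Your proof is correct and follows essentially the same route as the paper: identify $\xi_2=\xi_3=0$ from the vanishing residuals, control $\xi_1$ via Lemma~\ref{le:xi.1.bound}, and close with the Gronwall inequality of Lemma~\ref{lemma:Gronwall.weak}. Your reduction to the case~\ref{item:E2} via Remark~\ref{rem:E1.implies.E2} is a slight streamlining compared with the paper, which instead notes that~\eqref{eq:cond.relentr.E2} implies~\eqref{eq:cond.relentr.E1} and invokes whichever of Theorems~\ref{thm:relentr.weak.E1} or~\ref{thm:relentr.weak.E2} applies; and your explicit verification that $a\in L^1_{\mathrm{loc}}([0,T))$ fills in a detail the paper leaves implicit.
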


\begin{proof}
Since~\eqref{eq:cond.relentr.E2} implies~\eqref{eq:cond.relentr.E1},
by Theorem~\ref{thm:relentr.weak.E1}
or Theorem~\ref{thm:relentr.weak.E2} 
we obtain the relative entropy inequality~\eqref{eq:relent.total.weak}.
Since $(\ol \rho, \ol n)$ is a strong solution, the residuals $\ol R_1$ and $\ol R_2$ are identically zero, which implies $\xi_2 \equiv \xi_3 \equiv 0$. 
Further, we use 
Lemma~\ref{le:xi.1.bound} to bound $\xi_1$ by the relative entropy, which
leads to 
   \begin{align*}
   \begin{aligned}
    &\int_0^T -\varphi'\,  \EE(\tau;\rho,n \, \vert \, \ol \rho,\ol n)  
    + \frac12 \varphi \EE(\tau;\rho,n \, \vert \, \ol \rho,\ol n) \dd\tau \\
   &\qquad \leq \int_0^T\!\!\!\int_\R \varphi \,\xi_1 \dd y \dd\tau
     \leq \max\{2,\gamma -1\} \Big\|\Big[ \Big( \frac{\ol n}{\ol \rho}\Big)_y\Big]_+ \Big\|_{L^\infty((0,T)\times\R)} \int_0^T \varphi \EE(\tau;\rho,n \, \vert \, \ol \rho,\ol n) \dd\tau
   \end{aligned}  
   \end{align*}
    for all $\varphi\in C^1_c([0,\infty))$ with $\varphi\geq 0$,
    since $\EE(0; \rho_0, n_0 \, | \, \ol\rho(0),\ol n(0))=0$.
Applying the generalized Gronwall inequality from Lemma~\ref{lemma:Gronwall.weak}, 
we arrive at $\EE(\tau;\rho,n \, \vert \, \ol \rho,\ol n) \leq 0$ for a.a.~$\tau>0$, which yields $(\rho,n) = (\ol \rho, \ol n)$ almost everywhere in $[0,T] \times \R$. 
\end{proof}

\section{Convergence towards similarity profiles} \label{se:ConvProof}

In this section we prove our main results in the one-dimensional case,
namely the convergence of weak solutions 
$(\rho,n)$ to the scaled system \eqref{eq:Euler.trans} towards the similarity profile $(\rho^*,n^*)$ defined by \eqref{eq:profileEq}.
To this end, we consider the residuals $\ol R_i$ as defined in~\eqref{eq:residuals}, and the function $\xi$ given in~\eqref{eq:xi.def}, all with respect to the profile $(\rho^*, n^*)$ and denote the residuals with $R_i^*$.

\subsection{Coincident limits at spatial infinity} \label{su:coincidentLimits}

We first consider the case when the limits $\rho_{\pm}$ coincide,
so that $\hat\rho=\rho_+=\rho_-\geq 0$.
Then, the corresponding similarity profile
is constant and given by
$(\rho^*,n^*)(y) \equiv (\hat \rho,0)$.
In particular, we have $R^*_1=R^*_2=0$ and $\xi=0$.
Utilizing inequality~\eqref{eq:relent.total.weak}
for the total relative entropy, we deduce the convergence result in the case of coincident limits.

\begin{proof}[Proof of Theorem~\ref{thm:convergence.samelimits}]
Depending on whether we assume~\ref{item:E1} or~\ref{item:E2},
we can use Theorem~\ref{thm:relentr.weak.0}
or
Lemma~\ref{le:relentr.E2.compact.supp}, which implies
\[
    \int_0^\infty\!\! -\varphi'\, \EE(\tau;\rho,n \, \vert \, \hat \rho, 0)  
    +  \varphi \Big(\frac12\EE(\tau;\rho,n \, \vert \, \hat \rho, 0) 
    + \damp(\rho,n\,\vert\,\hat\rho,0)\Big)\dd \tau
   \leq \varphi(0)\,\EE(0;\rho_0,n_0 \, \vert \, \hat \rho, 0)
\]
for all $\varphi\in C^1_c([0,\infty))$ with $\varphi\geq 0$.
We omit $\damp(\rho,n\,\vert\,\hat\rho,0)\geq0$ on the left-hand side. 
An application of the generalized Gronwall inequality from Lemma~\ref{lemma:Gronwall.weak} 
yields~\eqref{eq:convergence.samelimits.entropy}.

    In the previous inequality we can also use a sequence of test functions $\varphi$ 
    that suitably approximates the characteristic function of an interval $[\tau,\sigma]$ for $\tau<\sigma$.
    Arguing similarly as in the proof of Lemma~\ref{lemma:Gronwall.weak}, 
    this leads to
    \[
    \EE(\sigma;\rho(\sigma),n(\sigma) \, \vert \, \hat \rho, 0)
    + \int_\tau^\sigma \frac12\EE(\tau';\rho,n \, \vert \, \hat \rho, 0) 
    + \damp(\rho,n\,\vert\,\hat\rho,0)\dd \tau'
   \leq \,\EE(\tau;\rho(\tau),n(\tau) \, \vert \, \hat \rho, 0)
   \]
   for a.a.~$\tau,\sigma\in(0,\infty)$ with $\tau<\sigma$.
   Omitting the entropy terms on the left-hand side,
   and using the decay estimate~\eqref{eq:convergence.samelimits.entropy},
   we obtain~\eqref{eq:convergence.samelimits.dissipation}.
\end{proof}

\subsection{Different limits at spatial infinity}
\label{su:generalCase}

Now we address the case where the limits $\rho_\pm$ are both positive and 
do not coincide, that is, $\rho_\pm>0$ and $\rho_+\neq\rho_-$.
Hence, the unique similarity profile $(\rho^*,n^*)$ solving \eqref{eq:profileEq} 
is not constant.
Further, by using that $\rho^*$ and $n^*$ are time-independent,
the corresponding residuals $R_i^*$ are given by
$R^*_1=0$ and $R^*_2=R^*$
with $R^*$ defined in~\eqref{eq:R.def}.
This in turn leads to simplified expressions for 
$\xi_j$ from~\eqref{eq:xi.def} 
as we now have
    \begin{equation} \label{eq:xi.profile}
        \xi_1 =  - \Big( \frac{n^*}{\rho^*}\Big)_y\big( \ee^{-\tau} \rho \Big\vert \frac n\rho -\frac{ n^*}{\rho^*} \Big\vert^2 + \pp (\rho \, \vert \, \rho^*) \big) , \quad 
        \xi_2 = -  \ee^{-\tau } \frac{R^*}{\rho^*} \Big(\frac n\rho -\frac{n^*}{\rho^*}\Big) \rho, \quad
        \xi_3 = 0.
    \end{equation}
Notice that the exponentially growing term 
$\ee^\tau \big( \pp(\rho^*)_y + \alpha n^*)$ in $R^*_2$ from~\eqref{eq:residuals},
which could counteract convergence as $\tau\to\infty$, 
vanishes due to the profile equation.

For the proof of Theorem~\ref{thm:Conv}, we transfer the estimates for the terms $\xi_1$ and $\xi_2$ that are now considered with respect to the profile $(\rho^*,n^*)$. 

\begin{lemma} \label{le:estimates.Xi.profile}
The functions $\xi_1$ and $\xi_2$, as defined in~\eqref{eq:xi.profile}, satisfy the estimates
\[
   \xi_1 \leq \theta \, \eta (\tau;\rho,n \, \vert \, \rho^*,n^*),
   \qquad
   |\xi_2| \leq  \mu \, \ee^{-\tau/2} \, \eta(\tau; \rho,n \, | \, \rho^*, n^*) +  \ee^{-\tau/2}  
    | R^*|
\]
with $\theta$ and $\mu$ as in Theorem~\ref{thm:Conv}.
\end{lemma}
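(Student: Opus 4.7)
The plan is to specialize the general estimates of Lemma~\ref{le:xi.1.bound} and Lemma~\ref{le:xi.2.bound} to the reference pair $(\ol\rho,\ol n)=(\rho^*,n^*)$ given by~\eqref{eq:profileEq}, and then simply take $L^\infty$-norms of the resulting coefficients. Since $(\rho^*,n^*)$ is time-independent and $\rho^*$ is bounded away from zero by Theorem~\ref{thm:profile}, all denominators that appear are well behaved, so no regularity issues arise.

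For the $\xi_1$-bound, Lemma~\ref{le:xi.1.bound} with $(\ol\rho,\ol n)=(\rho^*,n^*)$ immediately yields
\[
\xi_1\leq\max\{2,\gamma{-}1\}\Big[-\Big(\frac{n^*}{\rho^*}\Big)_y\Big]_+\eta(\tau;\rho,n\,\vert\,\rho^*,n^*).
\]
The key computation is then to rewrite the prefactor. Using the profile equation $n^*=-\frac{1}{\alpha}\pp(\rho^*)_y$ from~\eqref{eq:profileEq} together with the thermodynamic identity $\pp'(z)=z\,h''(z)$ from~\eqref{eq:h.properties}, we obtain $\pp(\rho^*)_y=\rho^* h''(\rho^*)\rho^*_y=\rho^* h'(\rho^*)_y$, so that
\[
\Big(\frac{n^*}{\rho^*}\Big)_y=-\frac{1}{\alpha}\Big(\frac{\pp(\rho^*)_y}{\rho^*}\Big)_y=-\frac{1}{\alpha}h'(\rho^*)_{yy}.
\]
Taking the $L^\infty$-norm in $y$ of the positive-part prefactor reproduces exactly the constant $\theta$ defined in the theorem, which yields the first inequality.

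For the $\xi_2$-bound, the profile being stationary gives $R^*_1=0$ and $R^*_2=R^*$ with $R^*$ as in~\eqref{eq:R.def}, hence $\ol R=\frac{n^*}{\rho^*}R^*_1-R^*_2=-R^*$ and in particular $|\ol R|=|R^*|$. Substituting this into Lemma~\ref{le:xi.2.bound} gives
\[
|\xi_2|\leq\Big(\frac{|R^*|}{2k(\rho^*)^\gamma}+\frac{3|R^*|}{2\rho^*}\Big)\ee^{-\tau/2}\eta(\tau;\rho,n\,\vert\,\rho^*,n^*)+\ee^{-\tau/2}|R^*|,
\]
and estimating the coefficient of the entropy term by its $L^\infty(\R)$-norm produces $\mu$. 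I do not anticipate a real obstacle here: everything reduces to plugging $(\rho^*,n^*)$ into the estimates already proved in Section~\ref{sec:preparatory} and recognizing the constants. The only point that might require a brief justification is that $h'(\rho^*)_{yy}$, $\frac{1}{(\rho^*)^\gamma}|R^*|$, and $\frac{1}{\rho^*}|R^*|$ are all in $L^\infty(\R)$, which follows from $\rho^*\geq\min\{\rho_-,\rho_+\}>0$ together with the super-exponential decay of $\rho^*_y$, $\rho^*_{yy}$ from~\eqref{eq:pme.convergence} (which also controls $n^*$, $n^*_y$ via the profile equation and hence $R^*$).
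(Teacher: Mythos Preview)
Your proposal is correct and follows essentially the same approach as the paper: both apply Lemmas~\ref{le:xi.1.bound} and~\ref{le:xi.2.bound} with $(\ol\rho,\ol n)=(\rho^*,n^*)$, use the profile identity $\alpha n^*=-\pp(\rho^*)_y=-\rho^*h'(\rho^*)_y$ to rewrite $\big(\tfrac{n^*}{\rho^*}\big)_y=-\tfrac{1}{\alpha}h'(\rho^*)_{yy}$, and then take $L^\infty$-norms of the resulting coefficients to recover $\theta$ and $\mu$. Your added remark on finiteness of these norms via Theorem~\ref{thm:profile} is a harmless elaboration of what the paper leaves implicit.
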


\begin{proof}
With $(\ol \rho,\ol n) = (\rho^*,n^*)$, both estimates follow directly by applying Lemmas~\ref{le:xi.1.bound} and~\ref{le:xi.2.bound}, respectively. To determine the constant $\theta$, 
we additionally use the identity $\alpha\,n^*=-\pp(\rho^*)_y=-\rho^*h'(\rho^*)_y$, which holds by the profile equations~\eqref{eq:profileEq}.
\end{proof}

We are now ready to prove Theorem \ref{thm:Conv}.

\begin{proof}[Proof of Theorem \ref{thm:Conv}]
First we show 
$\EE(0;\rho_0,n_0\,\vert\,\rho^*,n^*)<\infty$.
Since we have $\rho_\pm>0$,
and
since $\rho^*-\hat\rho$ and 
$n^*=\frac{1}{\alpha}\pp'(\rho^*)\rho^*_y$
decay exponentially fast by Theorem~\ref{thm:profile},
this follows directly
from Proposition~\ref{prop:relent.difference}
and the assumption $\EE(0;\rho_0,n_0\,\vert\,\hat\rho,0)<\infty$.

Depending on whether we assume~\ref{item:E1} or~\ref{item:E2},
we can use Theorem~\ref{thm:relentr.weak.E1}
or
Theorem~\ref{thm:relentr.weak.E2}
with $(\ol\rho,\ol n)=(\rho^*,n^*)$ to obtain
the corresponding relative entropy inequality~\eqref{eq:relent.total.weak}.
Notice that this choice is admissible
due to the decay estimates from Theorem~\ref{thm:profile}.
Estimating $\xi$ with the help of Lemma~\ref{le:estimates.Xi.profile},
we arrive at 
   \begin{equation}
   \label{eq:entr.ineq.final}
   \begin{aligned}
    &\int_0^\infty\!\! -\varphi'\, \EE(\tau;\rho,n \, \vert \, \rho^*,n^*)  
    +  \varphi \Big( \frac12 \EE(\tau;\rho,n \, \vert \, \rho^*, n^*) + \damp(\rho,n \, \vert \, \rho^*,n^*) \Big)\dd\tau
    \\
     &\quad
     \leq \varphi(0)\,\EE(0;\rho_0,n_0 \, \vert \, \rho^*,n^*)
     +\int_0^\infty\!\!\varphi \Big( (\theta+\mu\ee^{-\tau/2})\EE(\tau;\rho,n \, \vert \, \rho^*, n^*)
     + K \ee^{-\tau/2}\Big)\dd \tau.
   \end{aligned}  
   \end{equation}
We first omit the term $\damp(\rho,n \, \vert \, \rho^*,n^*)\geq 0$. 
Then the generalized Gronwall inequality from Lemma~\ref{lemma:Gronwall.weak} with $a(\tau):= -\frac12 + \theta +  \mu\ee^{-\tau/2}$ and $b(\tau) := K\ee^{-\tau/2} $ yields
\begin{align*}
\EE(\tau;\rho(\tau),n(\tau)  \, \vert \, \rho^*, n^*) 
&\leq \EE(0;\rho_0,n_0 \, \vert \, \rho^*,n^*)\, \ee^{\int_{0}^\tau a(\nu) \dd \nu} 
+ \int_{0}^\tau b(\sigma) \, \ee^{\int_{\sigma}^\tau a(\nu) \dd \nu} \dd \sigma 
\\
&\leq \ee^{-(\frac12{-}\theta)\tau + \frac12 \mu}  \Big( \EE(0;\rho_0,n_0 \, \vert \, \rho^*,n^*)
+ K  \int_{0}^\tau \, \ee^{-\theta \sigma } \dd \sigma \Big) 
\\
&\leq \ee^{-(\frac12{-}\theta)\tau + \frac12 \mu}   \Big( \EE(0;\rho_0,n_0 \, \vert \, \rho^*,n^*)
+ \frac{K}{\theta} \Big), 
\end{align*}
where we used the rough estimate
\begin{align*}
\int_{\sigma}^\tau a(\nu) \dd \nu 
=  - \Big(\frac12-\theta \Big) (\tau {-}\sigma) + \frac{\mu}{2} ( \ee^{-\sigma/2} {-} \ee^{-\tau/2}) \leq   -  \Big(\frac12-\theta \Big)  (\tau {-}\sigma) + \frac{\mu}{2}
\end{align*}
for $0 \leq \sigma \leq \tau$.
This shows~\eqref{eq:convergence}.

Now assume $\theta<\frac{1}{2}$.
Arguing as in the proof of Theorem~\ref{thm:convergence.samelimits}, 
from~\eqref{eq:entr.ineq.final} we obtain
   \[
   \begin{aligned}
    &\EE(\sigma;\rho(\sigma),n(\sigma) \, \vert \, \rho^*,n^*)  
    +  \int_\tau^\sigma \Big( \frac12 \EE(\tau';\rho,n \, \vert \, \rho^*, n^*) + \damp(\rho,n \, \vert \, \rho^*,n^*) \Big)\dd\tau'
    \\
     &\quad
     \leq 
     \EE(\tau;\rho(\tau),n(\tau) \, \vert \, \rho^*,n^*)  
     +\int_\tau^\sigma\!\! \Big( (\theta+\mu\ee^{-\tau'/2})\EE(\tau';\rho,n \, \vert \, \rho^*, n^*)
     + K \ee^{-\tau'/2}\Big)\dd \tau'.
   \end{aligned}  
   \]
We omit the first term and 
choose $\tau$ so large that $\theta+\mu\ee^{-\tau/2}\leq\frac{1}{2}$.
Then we arrive at
   \[
   \begin{aligned}
    \int_\tau^\sigma \damp(\rho,n \, \vert \, \rho^*,n^*) \dd\tau'
    &\leq 
     \EE(\tau;\rho(\tau),n(\tau) \, \vert \, \rho^*,n^*)  
     +\int_\tau^\infty K \ee^{-\tau'/2}\dd \tau'
     \\
     &\leq \ee^{-(\frac12{-}\theta)\tau + \frac12 \mu} \Big( \EE(0;\rho_0,n_0 \, \vert \, \rho^*,n^*) + \frac{K}{\theta} \Big) 
     +2K\ee^{-\tau/2}
   \end{aligned}  
   \]
   for a.a.~$\sigma>\tau\geq 2\log(\frac{2\mu}{1-2\theta})$,
   where we employed~\eqref{eq:convergence}.
   This yields~\eqref{eq:convergence.dissipation} and completes the proof.
\end{proof}

\section{Extension to multiple dimensions}
\label{sec:envar}

In this section we adapt the presented method 
for investigating the long-time behavior of solutions
to the multi-dimensional Euler equations,
given by the system
\begin{equation} \label{eq:Euler.original.md}
\begin{aligned}
\wt \rho_t +  \dv\wt m &= 0, \\
\wt m_t + \dv\Big(\frac{\wt m\otimes \wt m}{\wt\rho} 
+ \pp(\wt \rho)\bbI \Big) + \alpha \wt m
&=0
\end{aligned}
\end{equation}
in $(0,\infty)\times\R^d$, $d\in\N$,
where $\wt\rho\colon(0,\infty)\times\R^d\to[0,\infty)$ and $\wt m\colon(0,\infty)\times\R^d\to\R^d$ are (multi-dimensional) density and momentum fields.
Here, $\bbI\in\R^{d\times d}$ denotes the identity matrix.
Denoting by $\partial_j$ the partial derivatives in space,
we define the divergence of a vector field $v=(v_j)$ and a tensor field $A=(A_{ij})$ by
$\dv v = \partial_j v_j$ and
$(\dv A)_i=\partial_j A_{ij}$, respectively,
where we use Einstein summation convention and implicitly sum over repeated indices.

Instead of two different limits $\rho_\pm$, 
we now prescribe an angular function
\begin{equation}
\label{eq:rho.limit.angular}
\hat\rho\in C^1(\R^d\setminus\{0\};(0,\infty)),
\qquad
\forall\lambda>0\ \forall x\in\R^d\setminus\{0\}\colon \ 
\hat\rho(\lambda x)=\hat\rho(x),
\end{equation}
which is a multi-dimensional generalization of $\hat\rho$ from~\eqref{eq:rho.hat}.
We study solutions with initial values
$(\wt \rho_0,\wt m_0)$ satisfying
\[
(\wt \rho_0-\hat\rho,\wt m_0)(x) \to (0,0)
\quad \text{ as } |x| \to + \infty
\]
in a certain sense.
Since the existence of weak solutions is not known 
in the multi-dimensional case, 
we base our examination on energy-variational solutions.

\subsection{The notion of energy-variational solutions}
We begin with the definition of energy-variational solutions to the damped Euler equations~\eqref{eq:Euler.original.md} in the original coordinates.
To this end, we need to quantify a suitable entropy function.
In original variables, the relative entropy is given by
\[
\wt \EE(\wt\rho,\wt m\,\vert\,\ol\rho,\ol n)
=\int_{\R^d} \wt\eta(\wt\rho,\wt m\,\vert\,\ol\rho,\ol m)\dd x,
\qquad
\wt\eta(\wt\rho,  \wt m \,\vert\,\ol\rho,\ol m)
=\frac{1}{2}\wt\rho\Big|\frac{ \wt m }{\wt\rho}-\frac{\ol m}{\ol \rho}\Big|^2 + h(\wt\rho\,\vert\,\ol\rho).
\]
Similarly to before, we consider special reference densities
$\wt\rho_\infty\in C^1([0,\infty)\times\R^d)$ such that
\begin{equation}
    \label{eq:wtrhoinfty}
    \forall\, t\in [0,\infty) \ \exists\, R>0 : \quad 
    \wt\rho_\infty(t,x)=\hat\rho(x) \quad\text{for }|x|>R.
\end{equation}

\begin{definition} \label{def:envarsol.unscaled}
    Let $(\wt \rho_0,\wt m_0)\in L^1_\mathrm{loc}(\R^d)\times L^1_\mathrm{loc}(\R^d;\R^d)$ such that 
    $\wt\EE(\wt \rho_0,\wt m_0\,\vert\,\hat\rho,0)<\infty$.
    We call $(\wt\rho, \wt m,\wt E)\in L^1_{\mathrm{loc}}([0,\infty)\times \R^d)\times L^1_{\mathrm{loc}}([0,\infty)\times \R^d;\R^d)\times \BV([0,\infty))$ 
    an \textit{energy-variational solution} to \eqref{eq:Euler.original.md}
    if there is $\wt\rho_\infty\in C^1([0,\infty)\times\R^d)$ with \eqref{eq:wtrhoinfty}
    such that
    $\wt\EE(\wt \rho,\wt m\,\vert\,\wt\rho_\infty,0)\leq \wt E$ a.e.~in $(0,\infty)$, 
    and if there exists 
    a $1$-homogeneous \textit{regularity weight}
    $\wt{\mathcal K}\colon C_c^1(\R^d;\R^d)\to[0,\infty)$ such that
    \begin{equation}
     \label{eq:envar.unscaled}
    \begin{aligned}
       &\Big[\wt E+\int_{\R^d} \phi\,\wt \rho+\psi\cdot\wt m \dd x\Big]\Big|_s^t
       +\int_s^t\!\!\!\int_{\R^d} \alpha \frac{|\wt m|^2}{\wt\rho} \dd x\dd t' 
       \\
       &\qquad
       -\int_s^t\!\!\!\int_{\R^d} \phi_t \,  \wt \rho + \nabla\phi\cdot\wt m 
         + \psi_t \cdot \wt m + \nabla\psi : \Big( \frac{ \wt m\otimes\wt m}{\wt \rho} + \pp(\wt \rho)\bbI \Big) -  \alpha\psi \cdot\wt m  \dd x \dd t'
        \\
        &\qquad\qquad
        \leq 
        \int_s^t\!\!\!\int_{\R^d} -h'(\wt\rho_\infty)_t(\wt\rho-\wt\rho_\infty) - \nabla h'(\wt\rho_\infty)\cdot\wt m \dd x\dd t'
        \\
        &\qquad\qquad\qquad
        +\int_s^t \wt{\mathcal K}(\psi) \big(\wt E-\wt\EE(\wt \rho,\wt m\,\vert\,\wt\rho_\infty,0)\big)\dd t'
    \end{aligned}
    \end{equation}
   for all $\phi \in C_c^1([0,\infty) \times \R^d)$,
   $\psi \in C_c^1([0,\infty) \times \R^d;\R^d)$
   and for a.a.~$s,t\in[0,\infty)$ with $s<t$, 
   including $s=0$ with $(\wt\rho,\wt m)(0)=(\wt\rho_0,\wt m_0)$.
\end{definition}

As before, with the assumption
$\wt\EE(\wt \rho_0,\wt m_0\,\vert\,\hat\rho,0)<\infty$
we implicitly assume $\rho_0\geq 0$ a.e.
Furthermore, the statement and proof of Proposition~\ref{prop:relent.difference}
can immediately be transferred to the multi-dimensional setting. Therefore, $\wt\EE(\wt \rho_0,\wt m_0\,\vert\,\hat\rho,0)<\infty$
is equivalent to $\wt\EE(\wt \rho_0,\wt m_0\,\vert\,\wt\rho_\infty(0),0)<\infty$.

\begin{remark}\label{rem:envar.unscaled}
    Since $\mathcal K$ is 1-homogeneous,
    that is, $\wt{\mathcal K}(\lambda\psi)=\lambda\wt{\mathcal K}(\psi)$
    for $\lambda>0$, 
    the formulation~\eqref{eq:envar.unscaled} is equivalent to
    the accumulated formulas
    \begin{subequations}
    \label{eq:envar.unscaled.split}
    \begin{align}
     \label{eq:envar.unscaled.dens}
    \int_{\R^d}\phi\,\wt \rho \dd x\Big|_s^t
       -\int_s^t\!\!\!\int_{\R^d} \phi_t \,  \wt \rho &+ \nabla\phi\cdot \wt m 
        \dd x \dd t'
        = 0,
    \\
     \label{eq:envar.unscaled.mom}
    \begin{split}
    \int_{\R^d} \psi\cdot\wt m \dd x\Big|_s^t
    -\int_s^t\!\!\!\int_{\R^d}  \psi_t \cdot \wt m 
    &+ \nabla\psi:\Big( \frac{ \wt m\otimes\wt m}{\wt \rho} + \pp(\wt \rho)\bbI \Big) -  \alpha\psi \cdot\wt m  \dd x \dd t' 
    \\
    &\leq
    \int_s^t \wt{\mathcal K}(\psi) \big(\wt E -\wt\EE(\wt \rho,\wt m\,\vert\,\wt\rho_\infty,0)\big)\dd t',
    \end{split}
    \\
     \label{eq:envar.unscaled.enin}
    \wt E\big|_s^t
       +\int_s^t\!\!\!\int_{\R^d} \alpha \frac{|\wt m|^2}{\wt\rho} \dd x\dd t' 
       &\leq 
        \int_s^t\!\!\!\int_{\R^d} -h'(\wt\rho_\infty)_t(\wt\rho-\wt\rho_\infty) - \nabla h'(\wt\rho_\infty)\cdot\wt m \dd x\dd t',
    \end{align}
    \end{subequations}
    for all $\phi,\psi$ and $s<t$ as above.
    This can be seen by a classical variational argument: 
    Choose test functions $\mu\phi$ and $\lambda\psi$
    for $\mu,\lambda\geq 0$
    and consider the limit cases $\mu=0$, $\lambda=0$, $\mu\to\infty$ and $\lambda\to\infty$.
    While~\eqref{eq:envar.unscaled.dens} corresponds to a standard weak formulation of the continuity equation, the main difference between weak solutions 
    and energy-variational solutions to~\eqref{eq:Euler.original.md} is visible in~\eqref{eq:envar.unscaled.mom}
    and~\eqref{eq:envar.unscaled.enin}.
    The energy variable $\wt E$ occurs in both equations as an additional auxiliary variable.
    Inequality~\eqref{eq:envar.unscaled.mom} relaxes the momentum equation to an inequality,
    where the right-hand side measures the deviation from a weak form
    in terms of the \textit{energy defect} $\wt E-\wt\EE(\wt \rho,\wt m\,\vert\,\wt\rho_\infty,0)\geq 0$
    multiplied by the \textit{regularity weight} $\wt{\mathcal K}$.
    Moreover,~\eqref{eq:envar.unscaled.enin} corresponds to the (relative) entropy inequality, where the mechanical energy is replaced with $\wt E$.
    In particular, if 
    $\wt E = \wt\EE(\wt \rho,\wt m\,\vert\,\wt\rho_\infty,0)$,
    then \eqref{eq:envar.unscaled.split} reduces to a 
    classical weak formulation of~\eqref{eq:Euler.original.md} combined with an energy inequality.
\end{remark}

\begin{remark}
    In Definition~\ref{def:envarsol.unscaled}
    the regularity weight $\wt{\mathcal K}$ is not specified further. 
    Usually, it is chosen in such a way that the inequality~\eqref{eq:envar.unscaled} 
    is preserved under weak convergence or such that the solution set is convex,
    which ensures the existence of energy-variational solutions or
    properties like weak-strong uniqueness.
    In the case of the barotropic pressure law~\eqref{eq:pres.gamma},
    a suitable choice is given by
    \begin{equation}\label{eq:K}
    \wt{\mathcal K}(\psi):=\max\big\{2\|[\nabla\psi]_\mathrm{sym,+}\|_{L^\infty(\R^d)},(\gamma-1)\|[\dv\psi]_+\|_{L^\infty(\R^d)}
    \big\},
    \end{equation}
    see \cite{EitLas24envarhyp},
    where the system was studied in a torus domain. 
    Here, $[A]_{\mathrm{sym},+}$
    denotes the positive definite part of the symmetric part $\frac{1}{2}(A+A^\top)$ of a matrix $A\in\R^{d\times d}$,
    while $[a]_+=\max\{a,0\}$ denotes the nonnegative part of a scalar $a\in\R$.
\end{remark}

\begin{remark}
    \label{rem:envar.refdens}
    In order to obtain a finite (relative) energy 
    $\wt\EE(\wt\rho,\wt m\,\vert\,\wt\rho_\infty,0)$,
    the existence of a specific reference density $\wt\rho_\infty$
    with suitable values as $|x|\to\infty$ is necessary.
    However, the state variable $(\wt\rho,\wt m)$ 
    of an energy-variational solution $(\wt\rho,\wt m,\wt E)$
    is independent of this choice.
    Indeed, if we have two reference densities
    $\rho^1_\infty$ and $\rho^2_\infty$,
    then $(\wt\rho,\wt m, E^1)$ is an energy-variational solution to~\eqref{eq:Euler.original.md}
    for $\wt\rho_\infty=\rho^1_\infty$
    if and only if $(\wt\rho,\wt m, E^2)$ is an energy-variational solution
    for $\wt\rho_\infty=\rho_\infty^2$,
    where 
    \[
    E^2= E^1+\int_{\R^d} h(\rho^1_\infty\,\vert\,\rho^2_\infty)+\big(h'(\rho^1_\infty)- h'(\rho^2_\infty) \big)(\wt\rho-\rho^1_\infty)\dd x.
    \]
    This equivalence follows from the elementary identity
    \[
    \wt\EE(\wt\rho,\wt m\,\vert\,\rho^2_\infty,0)
    =\wt\EE(\wt \rho,\wt m\,\vert\,\rho^1_\infty,0)
    +\int_{\R^d} h(\rho^1_\infty\,\vert\,\rho^2_\infty)+\big(h'(\rho^1_\infty)-h'(\rho^2_\infty)\big)(\wt\rho-\rho^1_\infty)\dd x, 
    \]
    and by substituting the test function $\phi$ with $\phi+h'(\rho^1_\infty)-h'(\rho^2_\infty)$.
\end{remark}

 For the sake of completeness, let us show that, 
in contrast to weak solutions, 
the global-in-time existence of energy-variational solutions to~\eqref{eq:Euler.original.md}
can be shown in the multi-dimensional framework.
To this end,
we use the general existence result from~\cite{ALR24envarvisc},
which allows for the presence of dissipative terms.
Here we denote
the class of weakly continuous functions from $[0,\infty)$ to a Banach space $X$
by $C_w([0,\infty);X)$.

\begin{theorem}
    \label{thm:envar.existence}
    Let $\alpha\geq 0$ and $\pp(\rho)=k\rho^\gamma$ for $k>0$, $\gamma> 1$.
    Let either $\hat\rho$ be as in~\eqref{eq:rho.limit.angular}
    or $\hat\rho\equiv0$.
    Let $(\wt\rho_0,\wt m_0)\in L^1_{\mathrm{loc}}(\R^d)\times L^1_{\mathrm{loc}}(\R^d;\R^d)$ such that
    $\wt\EE(\wt \rho_0,\wt m_0\,\vert\,\hat\rho,0)<\infty$.
    Then there exists an energy-variational solution $(\wt\rho,\wt m,\wt E)$ 
    to~\eqref{eq:Euler.original.md}
    with 
    \[
    \begin{aligned}
    \wt\rho-\hat\rho
    &\in C_w([0,\infty),L^\gamma(\R^d)+L^2(\R^d)),
    \\
    \wt m
    &\in C_w([0,\infty),L^{\frac{2\gamma}{\gamma+1}}(\R^d;\R^d)+L^2(\R^d;\R^d))
    \end{aligned}
    \]
    with $\wt E(0)=\wt\EE(\wt \rho_0,\wt m_0\,\vert\,\hat\rho,0)$.
    The corresponding regularity weight $\wt{\mathcal K}$ is given by~\eqref{eq:K}.
\end{theorem}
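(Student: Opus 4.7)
The plan is to apply the abstract existence result for energy-variational solutions to dissipative hyperbolic systems from~\cite{ALR24envarvisc}, combined with the relative-entropy analysis of compressible Euler carried out in~\cite{EitLas24envarhyp}, and to transfer these results to the whole-space setting with the angular far-field data $\hat\rho$.

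As a preliminary step, I would fix a smooth time-independent reference density $\wt\rho_\infty\in C^1(\R^d)$ satisfying~\eqref{eq:wtrhoinfty}, obtained by smoothing $\hat\rho$ in a neighbourhood of the origin; this is possible since $\hat\rho$ is $C^1$ and $0$-homogeneous away from $0$. By Remark~\ref{rem:envar.refdens} the particular choice affects only the scalar $\wt E$ and not the state $(\wt\rho,\wt m)$, and an $\R^d$-analogue of Proposition~\ref{prop:relent.difference} guarantees $\wt\EE(\wt\rho_0,\wt m_0\,\vert\,\wt\rho_\infty,0)<\infty$. The central algebraic observation is that, for every sufficiently smooth $\psi$, the relative flux contribution appearing in~\eqref{eq:envar.unscaled.mom} obeys a bound of the form
\[
\int_{\R^d}\nabla\psi:\Big(\tfrac{\wt m\otimes\wt m}{\wt\rho}+\big(\pp(\wt\rho)-\pp(\wt\rho_\infty)\big)\bbI\Big)\dd x\leq \wt{\calK}(\psi)\,\wt\EE(\wt\rho,\wt m\,\vert\,\wt\rho_\infty,0)+\mathcal R(\psi,\wt\rho_\infty),
\]
with $\wt{\calK}$ given by~\eqref{eq:K} and $\mathcal R$ a smooth remainder depending only on $\wt\rho_\infty$; the two contributions defining $\wt{\calK}$ reflect the kinetic and internal parts of $\wt\eta$ (using $\pp(\rho\,\vert\,\bar\rho)=(\gamma-1)h(\rho\,\vert\,\bar\rho)$ for $\pp=k\rho^\gamma$). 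This is essentially the computation of~\cite{EitLas24envarhyp}, and it is precisely what forces the choice~\eqref{eq:K} and makes the solution set convex and weak-$*$ closed.

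The solution itself would then be constructed by spatial exhaustion: restrict~\eqref{eq:Euler.original.md} to balls $B_R\subset\R^d$ with impermeable boundary condition $\wt m\cdot\nu=0$, produce energy-variational solutions $(\wt\rho^R,\wt m^R,\wt E^R)$ by the abstract scheme of~\cite{ALR24envarvisc}, and pass to the limit $R\to\infty$. An $\R^d$-analogue of Lemma~\ref{lem:coercivity} delivers the uniform bounds $\wt\rho^R-\hat\rho\in L^\infty(0,\infty;L^\gamma+L^2)$ and $\wt m^R\in L^\infty(0,\infty;L^{2\gamma/(\gamma+1)}+L^2)$ claimed in the statement, while monotonicity of $\wt E^R$ up to the controlled source on the right of~\eqref{eq:envar.unscaled.enin} yields $\BV$-control in time through a Helly-type argument. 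Lower semicontinuity of the energy defect $\wt E-\wt\EE$ together with convexity of the dissipation functional allow the passage to the limit in~\eqref{eq:envar.unscaled}, and weak-in-time continuity of $(\wt\rho,\wt m)$ is then extracted from the identities in~\eqref{eq:envar.unscaled.split}. The main difficulty is not the algebraic bound involving $\wt{\calK}$, which is essentially known from~\cite{EitLas24envarhyp}, but the interplay of the non-compact domain with the non-trivial angular data $\hat\rho$: one has to show that the localization to $B_R$ preserves the energy-defect structure and that no mass or momentum escapes to infinity as $R\to\infty$. The finiteness of $\wt\EE(\wt\rho_0,\wt m_0\,\vert\,\hat\rho,0)$, combined with the strict convexity of $h$ encoded in Lemma~\ref{lem:coercivity}, should supply the required tightness.
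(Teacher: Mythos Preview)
Your preliminary choice of a time-independent smooth $\wt\rho_\infty$ and the identification of the regularity weight~\eqref{eq:K} via the relative-flux bound are in line with the paper. However, your construction of the solution takes a different route from the paper's.

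The paper does \emph{not} exhaust $\R^d$ by balls $B_R$ and pass to the limit. Instead, it works directly on the whole space: after shifting $\wt\sigma=\wt\rho-\wt\rho_\infty$, it places the state variable in the reflexive space $\bbV=(L^\gamma(\R^d)+L^2(\R^d))\times(L^{2\gamma/(\gamma+1)}(\R^d;\R^d)+L^2(\R^d;\R^d))$, defines the entropy functional $\calF(\wt\sigma,\wt m)=\wt\EE(\wt\sigma+\wt\rho_\infty,\wt m\,\vert\,\wt\rho_\infty,0)$ and the dissipation $\Psi$, and then simply verifies the abstract hypotheses of \cite[Theorem~3.3]{ALR24envarvisc} on $\bbV$ itself. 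The key verifications are (i) strict convexity and weak lower semicontinuity of $\calF$, (ii) coercivity of $\calF$ on $\bbV$, obtained from the $\R^d$-analogue of Lemma~\ref{lem:coercivity} by splitting into the sets $\{\wt\rho\le r_0\}$ and $\{\wt\rho>r_0\}$, and (iii) convexity and lower semicontinuity of the map $(\wt\sigma,\wt m)\mapsto \Psi-\langle\text{flux}\rangle+\wt{\calK}(\psi)\calF$, for which one argues as in \cite[Lemma~5.6]{EitLas24envarhyp}. The abstract theorem then delivers the energy-variational solution in one stroke, including the weak-in-time continuity.

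Your exhaustion approach is not wrong in principle, but it creates precisely the difficulties you yourself flag (boundary conditions on $\partial B_R$, compatibility of the energy-defect structure under restriction, tightness as $R\to\infty$), none of which arise in the paper because the abstract framework of \cite{ALR24envarvisc} is formulated over an arbitrary reflexive Banach space and thus applies to $\R^d$ directly once coercivity is established. The ``main difficulty'' you identify---the non-compact domain with angular data---is in fact absorbed entirely by the choice of the sum space $\bbV$ together with the coercivity estimate, so no limiting procedure in $R$ is needed. One small point: what the abstract theorem requires is the \emph{convexity} of the shifted functional (item (iii) above), not merely the upper bound you state; the two are closely related for the kinetic part but the pressure part needs the observation $\pp(\rho\,\vert\,\ol\rho)=(\gamma-1)h(\rho\,\vert\,\ol\rho)$ to match the second branch of $\wt{\calK}$.
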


\begin{proof}
    We choose a time-independent function $\wt\rho_\infty\in C^1(\R^d)$ satisfying~\eqref{eq:wtrhoinfty} 
    such that $\wt\rho_\infty(\R^d)=\hat\rho(\R^d)$.
    By Remark~\ref{rem:envar.refdens}, this is no restriction.
    By $\wt\EE(\wt \rho_0,\wt m_0\,\vert\,\hat\rho,0)<\infty$
    and the multi-dimensional analog of Proposition~\ref{prop:relent.difference},
    we then have 
    $\wt\EE(\wt \rho_0,\wt m_0\,\vert\,\wt\rho_\infty,0)<\infty$.

    We express an energy-variational solution $(\wt\rho,\wt m)$ 
    in the form $(\wt\sigma+\wt\rho_\infty,\wt m)$, that is, $\wt\sigma=\wt\rho-\wt\rho_\infty$,
    where $(\wt\sigma,\wt m)\in \bbV$ for the reflexive Banach space $\bbV=\bbV_1\times\bbV_2$
    with
    \[
    \bbV_1=L^\gamma(\R^d)+L^2(\R^d),
    \qquad
    \bbV_2=L^{2\gamma/(\gamma+1)}(\R^d;\R^d)+L^2(\R^d;\R^d).
    \]
    We extend $\eta$ to a function on $\R\times\R^d$ by setting
    $\eta(\rho,m)=+\infty$
    if $\rho<0$, and if $\rho=0$ and $m\neq 0$,
    and we define the total (relative) entropy
    \[
    \calF\colon \bbV\to[0,\infty],
    \qquad
    \mathcal F(\wt\sigma,\wt m):=\wt \EE(\wt \sigma+\wt\rho_\infty,\wt m\,\vert\,\wt\rho_\infty,0).
    \]
    Since $\wt\rho_\infty$ is time-independent, the dissipation functional is given by
    \[
    \Psi\colon\bbV\to[0,\infty],
    \qquad
    \Psi(\wt\sigma,\wt m)
    =\int_{\R^d} \frac{\alpha |\wt m|^2}{\wt\sigma+\wt\rho_\infty}  
    +\nabla h'(\wt\rho_\infty)\cdot\wt m \dd x,
    \]
    where we set $\frac{|m|^2}{\rho}=+\infty$
    if $\rho<0$, and if $\rho=0$ and $m\neq 0$.
    We choose the regularity weight $\wt{\mathcal K}$ as in~\eqref{eq:K},
    and show that all assumptions in the abstract existence result
    from~\cite[Theorem 3.3]{ALR24envarvisc} are satisfied.
    In particular,    
    since $\eta$ is strictly convex and lower semicontinuous, 
    this also applies to the associated relative entropy,
    so that $\calF$  
    is strictly convex and lower semicontinuous.
    To show coercivity of $\calF$,
    let $K=\wt\rho_\infty(\R^d)$.
    Then Lemma~\ref{lem:coercivity} yields a constant $r_0\geq 0$ such that~\eqref{eq:eta.coercive} holds
    for all $\ol\rho=\wt\rho_\infty(x)$, $x\in\R^d$.
    For $(\wt\sigma,\wt m)\in\bbV$,
    we introduce the sets
    \[
    M_1=\set{x\in\R^d}{|\wt\sigma(x)+\wt\rho_\infty(x)|\leq r_0},
    \qquad
    M_2=\set{x\in\R^d}{|\wt\sigma(x)+\wt\rho_\infty(x)|> r_0}
    \]
    and the decomposition
    $\wt\sigma=\wt\sigma_1+\wt\sigma_2$, 
    $\wt m=\wt m_1 + \wt m_2$
    with
    $\wt\sigma_j=\chi_{M_j}\wt\sigma$, $\wt m_j=\chi_{M_j}\wt m$ for $j=1,2$. 
    Here, $\chi_M$ denotes the characteristic function of a set $M\subset\R^d$. 
    We then obtain
    \[
    \begin{aligned}
    \mathcal F(\wt\sigma,\wt m)
    &=\int_{M_1}\wt\eta(\wt\sigma+\wt\rho_\infty,\wt m\,\vert\,\wt \rho_\infty,0)\dd x
    + \int_{M_2}\wt\eta(\wt\sigma+\wt\rho_\infty,\wt m\,\vert\,\wt\rho_\infty,0)\dd x
    \\
    &\geq C
    \int_{\R^d}|\wt m_1|^2+|\wt \sigma_1|^2 + |\wt m_2|^\frac{2\gamma}{\gamma+1}+|\wt \sigma_2|^\gamma\dd x
    \geq
    C \|(\wt\sigma,\wt m)\|_{\bbV}^\alpha
    \end{aligned}
    \]
    for some exponent $\alpha>1$ if $\|(\wt\sigma,\wt m)\|_{\bbV}>1$.
    This implies coercivity of $\mathcal F$.
    Choosing $\wt{\mathcal K}$ as in~\eqref{eq:K},
    we further deduce the convexity 
    and lower semicontinuity of
    \[
    (\wt\sigma,\wt m)\mapsto
    \Psi(\wt \sigma,\wt m)
    -\int_{\R^d} \nabla \phi\cdot \wt m + \nabla\psi : \Big( \frac{ \wt m\otimes\wt m}{\wt\sigma+\wt\rho_\infty} + \pp(\wt\sigma+\wt\rho_\infty)\bbI \Big) -  \alpha\psi\cdot \wt m  \dd x 
    +\wt{\mathcal K}(\psi) \wt\calF(\wt \sigma,\wt m)
    \]
    on the convex set $\dom \Psi=\dom \calF$ 
    for all $\phi,\psi\in C_c^1(\R^d)\times C_c^1(\R^d;\R^d)$.
    This is clear for all terms linear in $(\wt\sigma,\wt m)$ 
    as well as for $\Psi$.
    For the remaining terms, we can
    argue
    as in~\cite[Lemma 5.6]{EitLas24envarhyp}.
    The remaining assumptions necessary 
    to apply~\cite[Theorem 3.3]{ALR24envarvisc}
    can be verified as in~\cite[Section 5.2]{EitLas24envarhyp},
    which yields the existence of an energy-variational solution to~\eqref{eq:Euler.original.md}.
\end{proof}

As in the study of the asymptotic behavior of weak solutions
in the previous sections,
in Theorem~\ref{thm:envar.existence}
we exclude reference densities that have zeros but do not vanish identically.
This is due to the coercivity estimate~\eqref{eq:eta.coercive},
which is not valid in this case.

\subsection{Reformulation in parabolic scaling variables}
As in the one-dimensional case, to study the long-time behavior of solutions to~\eqref{eq:Euler.original.md}, 
we consider the problem in parabolic scaling variables.
We define 
$\tau := \log(1{+}t)$ and $ y:= \frac{x}{\sqrt{1{+}t}}$
and set
$\rho(\tau,y) := \wt \rho(t,x)$ and $n(\tau,y) := \ee^{d\tau/2} m(\tau,y) = (1{+}t)^{d/2}\, \wt m(t,x)$.
Observe that in the multi-dimensional framework,
we scale the magnitude of
$\wt m$ by $\ee^{d\tau/2}$ instead of $\ee^{\tau/2}$.
Then~\eqref{eq:Euler.original.md} is equivalent to
\begin{equation} \label{eq:Euler.trans.md}
\begin{aligned}
\rho_\tau &= \frac{y}{2}\cdot\nabla \rho - \dv n, \\
n_\tau &= \frac{y}{2} \cdot\nabla n + \frac d2 n - \dv\Big( \frac{n\otimes n}{\rho} \Big) 
- \ee^{d\tau} \big( \nabla \pp(\rho)_y + \alpha n\big).
\end{aligned}
\end{equation}
The the associated relative entropy is given by
\[
\EE(\tau;\rho,n\,\vert \,\ol\rho,\ol n)
=\int_{\R^d} \eta\big(\tau;\rho(y),n(y)\,\vert\,\ol\rho(y),\ol n(y)\big)\dd y
\]
with
\[
\eta(\tau;\rho,n\,\vert\,\ol\rho,\ol n)
=\frac{1}{2}\ee^{-d\tau}\rho\Big|\frac{n}{\rho}-\frac{\ol n}{\ol \rho}\Big|^2 + h(\rho\,\vert\,\ol\rho).
\]
For the sake of generality, we consider a $\tau$-dependent 
reference density $\rho_\infty\in C^1([0,\infty)\times\R^d)$ such that
\begin{equation}
    \label{eq:rhoinfty.md}
    \forall\, \tau\in [0,\infty) \ \exists\, R>0 : \quad 
    \rho_\infty(\tau,y)=\hat\rho(y) \text{ for }|y|>R,
\end{equation}
where $\hat\rho$ is the given angular limit density satisfying~\eqref{eq:rho.limit.angular}.
Energy-variational solutions to~\eqref{eq:Euler.trans.md} 
are defined as follows.

\begin{definition} \label{def:envarsol}
    Let $(\rho_0,n_0)\in L^1_\mathrm{loc}(\R^d) \times L^1_\mathrm{loc}(\R^d ; \R^d)$ such that 
    $\EE(\rho_0,n_0\,\vert\,\hat\rho,0)<\infty$.
    We call a pair $(\rho, n,E)\in L^1_{\mathrm{loc}}([0,\infty)\times \R^d  )\times L^1_{\mathrm{loc}}([0,\infty)\times  \R^d; \R^d )\times \BV([0,\infty))$ 
    an \textit{energy-variational solution} to \eqref{eq:Euler.trans.md} 
    if there is $\rho_\infty\in C^1([0,\infty)\times\R^d)$ with~\eqref{eq:rhoinfty.md} 
    such that $\EE(\tau;\rho,n\,\vert\,\rho_\infty,0)\leq E$ a.e.~in $(0,\infty)$, and if there exists 
    a $1$-homogeneous \textit{regularity weight}
    $\mathcal K\colon C_c^1(\R^d;\R^d)\to[0,\infty)$ such that
    \begin{equation}
     \label{eq:envar}
    \begin{aligned}
       &\Big[E+\int_{\R^d} \phi\, \rho+\psi\cdot n \dd y\Big]\Big|_\sigma^\tau
       +\int_\sigma^\tau \frac{d}{2}E\dd\tau' 
       +\int_\sigma^\tau\!\!\!\int_{\R^d} \alpha \frac{|n|^2}{\rho} \dd y\dd\tau'
       \\
       &\quad
       -\int_\sigma^\tau\!\!\!\int_{\R^d}  
        \phi_{\tau} \,  \rho - \nabla\phi \cdot \big(\frac y2 \rho - n \big) - \frac d2 \phi \rho
        \dd y\dd \tau' 
       \\
       &\qquad
       -\int_\sigma^\tau\!\!\!\int_{\R^d} 
         \psi_{\tau} \cdot n + \nabla\psi : \Big(- n\otimes \frac y2 + \frac{ n\otimes n}{\rho} + \ee^{d\tau'} \pp(\rho)\bbI \Big) -  \psi \cdot \ee^{d\tau'} \alpha n  \dd y\dd \tau'
        \\
        &\qquad\quad
        \leq 
        \int_\sigma^\tau\!\!\!\int_{\R^d}
        -\Big(h'(\rho_\infty)_\tau -\frac{y}{2}\cdot\nabla h'(\rho_\infty)\Big) (\rho-\rho_\infty) 
        -\nabla h'(\rho_\infty)\cdot n \dd y\dd \tau'
        \\
        &\qquad\qquad
        +\int_\sigma^\tau \ee^{d\tau'}\mathcal K(\psi) \big( E-\EE(\tau;\rho,n\,\vert\,\rho_\infty,0)\big)\dd \tau'
    \end{aligned}
    \end{equation}
    for all $\phi \in C_c^1([0,\infty) \times \R^d)$,
    $\psi \in C_c^1([0,\infty) \times \R^d;\R^d)$
    and for a.a.~$\sigma,\tau\in[0,\infty)$ with $\sigma<\tau$, 
    including $\sigma=0$ with $(\rho,n)(0)=(\rho_0,n_0)$.
\end{definition}

\begin{remark}
Arguing as in Remark~\ref{rem:envar.unscaled}, we see that 
\eqref{eq:envar} corresponds to a relaxed version of 
a weak formulation of~\eqref{eq:Euler.trans.md}
combined with a (relative) entropy inequality. 
In particular, in dimension $d=1$, 
an energy-variational solution $(\rho,n,E)$ 
with $E=\EE(\tau;\rho,n\,\vert\,\rho_\infty,0)$
is the same as a weak solution to~\eqref{eq:Euler.trans} in the sense of Definition~\ref{def:weakEntSol.trans}
that satisfies the entropy inequality from~\ref{item:E2}
if we choose a $\tau$-independent reference density $\rho_\infty$.
Actually, this choice is no restriction, compare Remark~\ref{rem:envar.refdens}.
\end{remark}

We next show that energy-variational solutions to~\eqref{eq:Euler.original.md} and
to~\eqref{eq:Euler.trans.md} are related to each other by parabolic scaling.

\begin{proposition}
    \label{prop:envar.equiv}
    Let $(\wt\rho, \wt m,\wt E)
    \in L^1_{\mathrm{loc}}([0,\infty)\times \R^d)\times L^1_{\mathrm{loc}}([0,\infty)\times \R^d;\R^d)\times \BV([0,\infty))$,
    and set 
    $\rho(\tau,y)=\wt\rho(t,x)$, $n(\tau,y)=\ee^{d\tau/2}\wt m(t,x)$ and $E(\tau)=\ee^{-d\tau/2}\wt E(t)$,
    where $t=\ee^{\tau}-1$ and $x=\ee^{\tau/2}y$.
    Then $(\wt\rho, \wt m,\wt E)$ is an energy-variational solution to~\eqref{eq:Euler.original.md}
    if and only if $(\rho, n, E)$ is an energy-variational solution to~\eqref{eq:Euler.trans.md}
    with the same initial values 
    $(\rho_0,n_0)=(\wt \rho_0,\wt m_0)\in L^1_\mathrm{loc}(\R^d)\times L^1_\mathrm{loc}(\R^d;\R^d)$
    and
    the same regularity weight $\mathcal K=\wt{\mathcal K}$.
    Moreover, 
    the reference densities are related by 
    $\wt\rho_\infty(t,x)=\rho_\infty(\tau,y)$.  
\end{proposition}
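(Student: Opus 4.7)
The proof is a direct change-of-variables verification, with no substantive analytical input: it shows that the two inequalities \eqref{eq:envar.unscaled} and \eqref{eq:envar} are term-by-term images of one another under the scaling. First I would observe the pointwise identity for the relative entropy density: under the substitutions $\tilde\rho=\rho$, $\tilde m=\ee^{-d\tau/2}n$ and $\tilde\rho_\infty(t,x)=\rho_\infty(\tau,y)$, one has $\wt\eta(\tilde\rho,\tilde m\,|\,\tilde\rho_\infty,0)=\eta(\tau;\rho,n\,|\,\rho_\infty,0)$ at corresponding points. Combined with the spatial Jacobian $\d x = \ee^{d\tau/2}\d y$, this gives $\wt\calE(\tilde\rho,\tilde m\,|\,\tilde\rho_\infty,0)(t)=\ee^{d\tau/2}\calE(\tau;\rho,n\,|\,\rho_\infty,0)$, and together with $\wt E=\ee^{d\tau/2}E$ yields both the equivalence of the energy admissibility condition and the key identity $\wt E-\wt\calE=\ee^{d\tau/2}(E-\calE)$ for the energy defect. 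The requirements \eqref{eq:wtrhoinfty} and \eqref{eq:rhoinfty.md} are obviously in bijective correspondence.

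Next I would set up the test-function correspondence. Given $(\phi,\psi)$ for \eqref{eq:envar}, define test functions for \eqref{eq:envar.unscaled} by $\tilde\phi(t,x):=\phi(\tau,y)$ and $\tilde\psi(t,x):=\psi(\tau,y)$ (with an additional $\tau$-dependent rescaling dictated by the matching of the pairing $\int\tilde\psi\cdot\tilde m\,\d x$ with $\int\psi\cdot n\,\d y$). Since $(\tau,y)\mapsto(t,x)=(\ee^\tau-1,\ee^{\tau/2}y)$ is a smooth diffeomorphism of $[0,\infty)\times\R^d$ onto itself preserving compact supports, this is a bijection on the admissible classes of test functions. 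The transformation of every integrand then relies on the chain-rule identities
\[
\partial_t = \ee^{-\tau}\bigl(\partial_\tau-\tfrac{y}{2}\cdot\nabla_y\bigr),
\quad
\nabla_x = \ee^{-\tau/2}\nabla_y,
\quad
\d x\,\d t = \ee^{(d/2+1)\tau}\,\d y\,\d\tau,
\]
together with the product rule
\[
\tfrac{\d}{\d\tau}\bigl(\ee^{d\tau/2}X(\tau)\bigr) = \ee^{d\tau/2}\bigl(\tfrac{d}{2}X+\partial_\tau X\bigr).
\]
Applied to $\wt E=\ee^{d\tau/2}E$, $\int\tilde\phi\tilde\rho\,\d x=\ee^{d\tau/2}\int\phi\rho\,\d y$ and to the $\tilde\psi\cdot\tilde m$-pairing, this product rule is precisely what turns the jump $[\,\cdot\,]_s^t$ in \eqref{eq:envar.unscaled} into the jump $[\,\cdot\,]_\sigma^\tau$ of \eqref{eq:envar} plus the extra drift integrals involving the factor $\tfrac{d}{2}$. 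The convective, pressure, damping and reference-density integrals transform analogously, with the stated exponential prefactors in \eqref{eq:envar} arising from the balance between the Jacobian $\ee^{(d/2+1)\tau}$, the scaling of $\tilde m$, and the scaling of $\nabla_x$.

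Finally I would transform the regularity-weight term: using $\nabla_x=\ee^{-\tau/2}\nabla_y$ and the $1$-homogeneity of $\wt{\mathcal K}=\mathcal K$, the spatial-sup norms that appear in $\wt{\mathcal K}(\tilde\psi)$ pick up an explicit power of $\ee^\tau$ relative to $\mathcal K(\psi)$, and this power combines with the $\ee^{d\tau/2}$ from the energy defect and the $\ee^\tau$ from $\d t=\ee^\tau\,\d\tau$ to produce exactly the $\ee^{d\tau'}$ weight in \eqref{eq:envar}. The main obstacle is the bookkeeping in this step and in the momentum-equation terms: one must verify that after dividing by the appropriate common $\tau$-dependent prefactor, every exponential factor matches what is displayed in \eqref{eq:envar}. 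Since each substitution is reversible, the forward implication automatically yields the converse, establishing the claimed equivalence together with the identification of initial data and of the regularity weight.
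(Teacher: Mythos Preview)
Your overall strategy---change variables and match terms---is exactly what the paper does, and most of your bookkeeping remarks are correct. There is, however, one real gap: your treatment of the boundary term in the energy variable.

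You assert that the product rule
\[
\tfrac{\d}{\d\tau}\bigl(\ee^{d\tau/2}X(\tau)\bigr) = \ee^{d\tau/2}\bigl(\tfrac{d}{2}X+\partial_\tau X\bigr)
\]
``is precisely what turns the jump $[\,\cdot\,]_s^t$ in \eqref{eq:envar.unscaled} into the jump $[\,\cdot\,]_\sigma^\tau$ of \eqref{eq:envar} plus the extra drift integrals involving the factor $\tfrac{d}{2}$.'' This is not true as stated. After direct substitution one obtains
\[
\bigl[\ee^{d\tau'/2}E\bigr]_\sigma^\tau + \int_\sigma^\tau \ee^{d\tau'/2}(\ldots)\,\d\tau' \leq \int_\sigma^\tau \ee^{d\tau'/2}(\ldots)\,\d\tau',
\]
and this is \emph{not} the same as $[E]_\sigma^\tau + \int_\sigma^\tau\tfrac{d}{2}E\,\d\tau' + \ldots$: the boundary contributions sit at two different times, so there is no common exponential factor to divide out, and the identity $[\ee^{d\tau'/2}X]_\sigma^\tau = [X]_\sigma^\tau + \int_\sigma^\tau\tfrac{d}{2}X\,\d\tau'$ is simply false (take $X\equiv 1$). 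Moreover $E$ is only $\BV$, so the derivative form of the product rule is not directly available anyway.

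The paper resolves this by a genuine extra step: it first derives the intermediate inequality with $[\ee^{d\tau'/2}E]_\sigma^\tau$, then uses \cite[Lemma~2.5]{EitLas24envarhyp} to pass to the equivalent variational form $\int_0^\infty(-\varphi'\ee^{d\tau'/2}E+\ldots)\,\d\tau' \leq \ldots$, substitutes $\ee^{-d\tau'/2}\varphi$ for the (smooth) test function $\varphi$---which is where the product rule is legitimately applied and where the $\tfrac{d}{2}E$ term actually appears---and finally invokes the same lemma again to return to the pointwise form. This two-way passage through the variational formulation is the only nontrivial technical device in the proof, and it is precisely what your outline omits. Your remark that ``dividing by the appropriate common $\tau$-dependent prefactor'' handles everything describes the integrand terms correctly but does not cover the endpoint term in $E$. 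Once you insert this step, the rest of your argument goes through; the paper also makes the test-function scalings explicit ($\wt\phi=\ee^{-d\tau/2}\phi$, $\wt\psi=\psi$), which avoids the vagueness in your ``additional $\tau$-dependent rescaling.''
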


\begin{proof}
    We only show one implication. The converse follows along the same lines.
    
    Let $(\wt\rho, \wt m,\wt E)$ be an energy-variational solution to~\eqref{eq:Euler.original.md}.
    Consider
    $\phi \in C_c^1([0,\infty) \times \R^d)$,
    $\psi \in C_c^1([0,\infty) \times \R^d;\R^d)$
    and define 
    $\wt\phi(t,x)=\ee^{-d\tau/2}\phi(\tau,y)$,
    $\wt\psi(t,x)=\psi(\tau,y)$.
    We use $\wt\phi$ and $\wt\psi$ as test functions in~\eqref{eq:envar.unscaled.split}.
    A variable transform shows that
    the scaled quantities $(\rho, n, E)$ satisfy
    \begin{align}
     \label{eq:envar.dens}
    \int_{\R^d} \phi\,\rho \dd y\Big|_\sigma^\tau
       &-\int_\sigma^\tau\!\!\!\int_{\R^d}\phi_{\tau} \,  \rho 
       - \nabla\phi\cdot \big(\frac y2 \rho - n \big) 
       - \frac d2 \phi \rho
        \dd y\dd \tau'  
        = 0,
    \\
     \label{eq:envar.mom}
    \begin{split}
    \int_{\R^d} \psi\cdot n \dd x\Big|_\sigma^\tau
    &-\int_\sigma^\tau\!\!\!\int_{\R^d}
    \psi_{\tau} \cdot n + \nabla \psi: \Big(- n\otimes \frac y2  + \frac{ n\otimes n}{\rho} + \ee^{d\tau'} \pp(\rho)\bbI \Big) -  \psi \cdot \ee^{d\tau'} \alpha n  \dd y\dd \tau'
    \\
    &\leq
    \int_\sigma^\tau \ee^{d\tau'}\mathcal K(\psi) \big( E-\EE(\tau;\rho,n\,\vert\,\rho_\infty,0)\big)\dd \tau',
    \end{split}
    \\
    \label{eq:envar.enin.0}
    \begin{split}
    \big[\ee^{\frac{d\tau'}{2}} E\big]\Big|_\sigma^\tau
       &+\int_\sigma^\tau\!\!\!\int_{\R^d} \ee^{\frac{d\tau'}{2}}\alpha \frac{|n|^2}{\rho} \dd y\dd \tau' 
       \\
        &\leq 
       \int_\sigma^\tau\!\!\!\int_{\R^d} -\ee^{\frac{d\tau'}{2}}
       \Big(h'(\rho_\infty)_\tau-\frac{y}{2}\cdot\nabla h'(\rho_\infty)\Big)(\rho-\rho_\infty)
       -\ee^{\frac{d\tau'}{2}}\nabla h'(\rho_\infty)\cdot n \dd y\dd \tau'
    \end{split}
    \end{align}
    for a.a.~$\sigma<\tau$.
    In virtue of~\cite[Lemma 2.5]{EitLas24envarhyp},
    we express the pointwise inequality~\eqref{eq:envar.enin.0} in a variational form as
    \[
    \begin{split}
    &\int_0^\infty \!\!- \varphi'\ee^{\frac{d\tau'}{2}} E 
    + \varphi\int_{\R^d} \ee^{\frac{d\tau'}{2}}\alpha \frac{|n|^2}{\rho} \dd y\dd \tau' 
       \\
       &
       \leq 
       \int_0^\infty\!\!\varphi\int_{\R^d} -\ee^{\frac{d\tau'}{2}}
       \Big(h'(\rho_\infty)_\tau-\frac{y}{2}\cdot\nabla h'(\rho_\infty)\Big)(\rho-\rho_\infty)
       -\ee^{\frac{d\tau'}{2}}\nabla h'(\rho_\infty)\cdot n \dd y\dd \tau'
       + \varphi(0) E(0)
    \end{split}
    \]
    for all $\varphi\in C_c^1([0,\infty))$ with $\varphi\geq 0$.
    We now use $\tau\mapsto\ee^{-d\tau/2}\varphi(\tau)$ as a test function and 
    employ~\cite[Lemma 2.5]{EitLas24envarhyp} again,
    to arrive at
    \begin{equation}
    \label{eq:envar.enin}
    \begin{split}
    E\big|_\sigma^\tau
       &+\int_\sigma^\tau\frac{d}{2} E\dd\tau'
       +\int_\sigma^\tau\!\!\!\int_{\R^d} \alpha \frac{|n|^2}{\rho} \dd y\dd \tau' 
       \\
       &\leq 
       \int_\sigma^\tau\!\!\!\int_{\R^d} 
       -\Big(h'(\rho_\infty)_\tau -\frac{y}{2}\cdot\nabla h'(\rho_\infty)\Big) (\rho-\rho_\infty) 
        -\nabla h'(\rho_\infty)\cdot n 
        \dd y\dd \tau'.
    \end{split}
    \end{equation}
    Summation of~\eqref{eq:envar.dens}, \eqref{eq:envar.mom} and~\eqref{eq:envar.enin} yields~\eqref{eq:envar}.
    Therefore, $(\rho,n,E)$ 
    is an energy-variational solution to~\eqref{eq:Euler.trans.md}.
\end{proof}

\subsection{Relative entropy inequality and long-time behavior}

We follow the proof of Theorem~\ref{thm:relentr.weak.E2}
to derive
a relative entropy inequality for energy-variational solutions.
Similarly to before, we define
    \begin{equation}
    \begin{aligned}
        \xi_1 &:=  - \nabla\Big( \frac{\ol n}{\ol\rho}\Big): \ee^{-d\tau} \rho \Big( \frac n\rho -\frac{\ol n}{\ol\rho} \Big)\otimes\Big( \frac n\rho -\frac{\ol n}{\ol\rho} \Big)
        - \dv\Big( \frac{\ol n}{\ol\rho}\Big) \pp (\rho \, \vert \, \ol \rho)  , \\
        \xi_2 &:=   \ee^{-d\tau }\frac{\rho}{\ol\rho} \Big(\frac n\rho -\frac{\ol n}{\ol\rho}\Big)
        \cdot \ol R, \\
        \xi_3 &:= -(\rho-\ol\rho) h''(\ol\rho) \ol R_1,
    \end{aligned}
    \label{eq:xi.def.md}
    \end{equation}
    where $\ol R:= \frac{\ol n}{\ol\rho} \cdot \ol R_1- \ol R_2$
    with
    \begin{equation}
\begin{aligned}
    \ol R_1
    &:=\ol\rho_\tau -\frac{y}{2}\cdot\nabla\ol\rho+\dv\ol n,
    \\
    \ol R_2
    &:=\ol n_\tau - \frac{y}{2}\cdot\nabla \ol n - \frac d2 \ol n + \dv\Big( \frac{\ol n\otimes\ol n}{\ol\rho} \Big) 
    + \ee^{d\tau} \big( \nabla\pp(\ol\rho) + \alpha \ol n\big),
\end{aligned}
\label{eq:residuals.md}
\end{equation}
and we set
\[
\damp(\rho,n\,\vert\,\ol\rho,\ol n) := \alpha \int_{\R^d} \rho \,\Big\vert \frac n\rho - \frac{\ol n}{\ol\rho} \Big\vert^2 \dd y.
\]
We deduce the following relative entropy inequality.

\begin{theorem}\label{thm:relentr.envar}
	Let $\hat\rho$ satisfy~\eqref{eq:rho.limit.angular}
	or $\hat\rho\equiv 0$.
    Let $(\rho,n,E)$ be an energy-variational solution
    to \eqref{eq:Euler.trans.md}
    with regularity weight $\mathcal K$, 
    reference density $\rho_\infty$,
    and initial data $(\rho_0,n_0)\in L^1_{\mathrm{loc}}(\R^d)\times L^1_{\mathrm{loc}}(\R^d;\R^d)$ satisfying 
    $\EE(0,\rho_0,n_0\,\vert\,\hat\rho,0)<\infty$.
    Let $\ol\rho\in C^1((0,\infty)\times\R^d)\cap C([0,\infty)\times\R^d)$ and
    $\ol n\in C^1((0,\infty)\times\R^d;\R^d)\cap C([0,\infty)\times\R^d;\R^d)$.
    We assume one of the following:
    \begin{enumerate}[label=(\alph*)]
        \item \label{item:relentr.envar.1}
        $\ol\rho-\hat\rho$ and $\ol n$ have compact support in space,
         or if $\hat\rho\equiv0$, then it holds $(\ol\rho,\ol n)=(0,0)$ and $\pp$ satisfies~\eqref{eq:pres.better}, 
        or
        \item \label{item:relentr.envar.2}
        $\pp$ is of the form~\eqref{eq:pres.gamma},
        and $\ol\rho$ satisfies
         $\inf\ol\rho>0$ and
    \[
    |\nabla\ol\rho|, \ |\nabla\ol n|, \ 
    \ol \rho_\tau,\ |\ol n_\tau|, \ 
    |y|(\ol\rho-\hat\rho), \ |y|\,|\ol n|
    \in L^\infty_{\mathrm{loc}}([0,\infty);L^1(\R^d)\cap L^\infty(\R^d)).
    \]
    \end{enumerate}
    Set
    \[
    \begin{aligned}
    \mathcal R(\tau)&:= E(\tau)-\EE(\tau;\rho(\tau),n(\tau)\,\vert\,\rho_\infty(\tau),0)+\EE(\tau;\rho(\tau),n(\tau) \, \vert \, \ol\rho(\tau),\ol n(\tau)),
    \\
    \mathcal R_0&:= E(0)-\EE(0;\rho_0,n_0\,\vert\,\rho_\infty(0),0)+\EE(0;\rho_0,n_0 \, \vert \, \ol\rho(0),\ol n(0)).
    \end{aligned}
    \]
    Then
    \begin{equation}
    \label{eq:relen.envar}
    \begin{aligned}
     &\int_0^\infty -\varphi'\, \mathcal R  
     +  \varphi \Big(\frac d2\mathcal R
     + \damp(\rho,n\,\vert\,\ol\rho,\ol n) \Big)\dd \tau
     \\
      &\qquad
      \leq \varphi(0)\mathcal R_0
      +\int_0^\infty \varphi \, \big[\Xi + \mathcal K(-\tfrac{\ol n}{\ol\rho})\big(E-\EE(\tau;\rho,n\,\vert\,\rho_\infty,0)\big)
      \big] \dd\tau
    \end{aligned}  
    \end{equation}
    for all $\varphi\in C^1_c([0,\infty))$ with $\varphi\geq 0$.
    Here $\Xi=\int_{\R^d} \xi_1+\xi_2+\xi_3\,\dd y$ with
    $\xi_j$, $j=1,2,3$, defined in~\eqref{eq:xi.def.md}.
\end{theorem}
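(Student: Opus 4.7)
The plan is to adapt the strategy of Lemma~\ref{le:relentr.E2.compact.supp} and Theorem~\ref{thm:relentr.weak.E2} to the multi-dimensional energy-variational setting. The guiding observation is that, by the $1$-homogeneity variational argument outlined in Remark~\ref{rem:envar.unscaled} applied to the scaled system, the energy-variational inequality~\eqref{eq:envar} is equivalent to the accumulated form consisting of the continuity equation as an equality, the momentum inequality carrying the regularity-weight defect on its right-hand side, and an entropy-type inequality for the scalar $E$ alone. This decomposition should let me mimic Proposition~\ref{prop:relent.weak.local} step by step, after which the remaining difficulty is purely one of localization.

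In case~\ref{item:relentr.envar.1} I would fix $\varphi\in C_c^1([0,\infty))$ with $\varphi\geq 0$ and test the (scaled) continuity equation twice, with $\phi = -\varphi\,h'(\ol\rho)$ and with $\phi = \tfrac12 \varphi\,\ee^{-d\tau}|\ol n/\ol\rho|^2$, and the momentum inequality with $\psi = -\varphi\,\ee^{-d\tau}\,\ol n/\ol\rho$. Since $\ol\rho-\hat\rho$ and $\ol n$ are compactly supported in $y$ and $\inf\ol\rho > 0$ on this support, all three test functions are admissible; in the degenerate subcase $(\ol\rho,\ol n)\equiv (0,0)$ under~\eqref{eq:pres.better} they reduce to zero, and only the energy-defect inequality for $E$ survives, giving~\eqref{eq:relen.envar} at once via Lemma~\ref{lem:pres.better}. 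Adding these three relations to the entropy inequality for $E$ extracted from~\eqref{eq:envar}, together with the elementary identity analogous to~\eqref{eq:hrhoinf} that encodes the change of reference between $\rho_\infty$ and $\ol\rho$ inside the definition of $\mathcal{R}$, should produce the multi-dimensional analogue of the master relation in the proof of Lemma~\ref{le:relentr.E2.compact.supp}.

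From this point the simplification proceeds exactly as in Proposition~\ref{prop:relent.weak.local}: I use $\pp(z) = z\,h'(z)-h(z)$ and $h''(z)=\pp'(z)/z$, integrate by parts in space (which is permitted by the compact supports), and observe that the off-diagonal combinations collapse to $\int_{\R^d}\dv\bigl(\tfrac{\ol n}{\ol\rho}\pp(\ol\rho)\bigr)\dd y = 0$. The surviving terms will reorganize into $\tfrac{d}{2}\mathcal{R}$, the frictional dissipation $\damp$, and the error $\Xi$ with $\xi_j$ as in~\eqref{eq:xi.def.md}, the residuals $\ol R_1,\ol R_2$ being those of~\eqref{eq:residuals.md}. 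The regularity-weight contribution from~\eqref{eq:envar} for the chosen $\psi$ reduces, by $1$-homogeneity of $\mathcal K$, to $\ee^{d\tau}\mathcal K(-\varphi\,\ee^{-d\tau}\ol n/\ol\rho) = \varphi\,\mathcal K(-\ol n/\ol\rho)$, which matches the last term on the right-hand side of~\eqref{eq:relen.envar}.

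Case~\ref{item:relentr.envar.2} I would then obtain by the cut-off approximation $(\ol\rho_R,\ol n_R) = (\phi_{2R}(\ol\rho{-}\rho_\infty)+\rho_\infty,\,\phi_R\,\ol n)$ used in the proof of Theorem~\ref{thm:relentr.weak.E2}, applying case~\ref{item:relentr.envar.1} to each $R$ and passing to the limit $R\to\infty$. The multi-dimensional analogue of Proposition~\ref{prop:relent.difference} (whose proof is unchanged) combined with the coercivity of Lemma~\ref{lem:coercivity} will deliver the convergence of $\mathcal{R}$, $\mathcal{R}_0$ and $\damp$, while the vectorial versions of Lemmas~\ref{le:xi.1.bound}--\ref{le:xi.3.bound} provide an $L^\infty_\mathrm{loc}([0,\infty);L^1(\R^d))$-dominating function for the pointwise convergent $\xi_R$, so that Pratt's theorem yields $\Xi_R\to\Xi$. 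The main obstacle I anticipate is the limit in the regularity-weight term $\mathcal K(-\ol n_R/\ol\rho_R)$; for the canonical choice~\eqref{eq:K}, $\mathcal K$ is continuous in the $L^\infty$-norm of $\nabla\psi$, and the hypotheses of~\ref{item:relentr.envar.2} ensure $\nabla(\ol n_R/\ol\rho_R)\to\nabla(\ol n/\ol\rho)$ in $L^\infty(\R^d)$, which closes the passage to the limit.
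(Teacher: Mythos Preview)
Your proposal follows essentially the same route as the paper's proof: derive the inequality first under the compact-support hypothesis~\ref{item:relentr.envar.1} by inserting suitable test functions into the energy-variational formulation, then pass to case~\ref{item:relentr.envar.2} by the cutoff approximation of Theorem~\ref{thm:relentr.weak.E2}. The paper organizes case~\ref{item:relentr.envar.1} slightly differently: rather than splitting~\eqref{eq:envar} into the three accumulated relations and testing each, it rewrites the single combined inequality~\eqref{eq:envar} in variational form and inserts the bundled test functions $\phi=\tfrac12\ee^{-d\tau}|\ol n/\ol\rho|^2+h'(\rho_\infty)-h'(\ol\rho)$ and $\psi=-\ee^{-d\tau}\ol n/\ol\rho$ in one stroke.

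This organizational difference matters at one technical point. Your choice $\phi=-\varphi\,h'(\ol\rho)$ for the continuity equation is \emph{not} admissible: in case~\ref{item:relentr.envar.1} one has $\ol\rho=\hat\rho$ outside a compact set with $\hat\rho>0$, so $h'(\ol\rho)$ does not have compact spatial support. The paper sidesteps this by using the difference $h'(\rho_\infty)-h'(\ol\rho)$, which \emph{is} compactly supported since both $\ol\rho-\hat\rho$ and $\rho_\infty-\hat\rho$ are; this single admissible choice simultaneously plays the role of your ``$-h'(\ol\rho)$'' test and of the elementary identity analogous to~\eqref{eq:hrhoinf}. Once you make this correction, the rest of your argument goes through as written.

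For case~\ref{item:relentr.envar.2}, your approximation argument matches the paper's, which is equally terse on the limit in the $\mathcal K$-term. Your remark that this passage requires some continuity of $\mathcal K$ (guaranteed for the canonical choice~\eqref{eq:K}) is a fair observation that the paper leaves implicit.
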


\begin{proof}
For simplicity, consider a $\tau$-independent reference density 
$\rho_\infty\in C^1(\R^d)$ satisfying~\eqref{eq:rhoinfty.md},
which is no restriction, compare Remark~\ref{rem:envar.refdens}.
Using \cite[Lemma 2.5]{EitLas24envarhyp},
we express~\eqref{eq:envar} in a variational form as
    \[
    \begin{aligned}
       &\int_0^\infty -\varphi' \Big(E+\int_{\R^d} \phi\, \rho+\psi\cdot n \dd y\Big)
       + \varphi\Big(\frac{d}{2}E+\int_{\R^d} \alpha \frac{|n|^2}{\rho} \dd y\Big)\dd\tau
       \\
       &\qquad
       -\int_0^\infty\!\!\varphi\int_{\R^d} 
        \phi_{\tau} \,  \rho - \nabla\phi \cdot \big(\frac y2 \rho - n \big) - \frac d2 \phi \rho
        \dd y\dd \tau
       \\
       &\qquad\qquad
       -\int_0^\infty\!\!\varphi\int_{\R^d}
         \psi_{\tau} \cdot n + \nabla\psi: \Big(- n\otimes\frac y2 + \frac{ n\otimes n}{\rho} + \ee^{d\tau} \pp(\rho)\bbI \Big) -  \psi \cdot \ee^{d\tau} \alpha n  \dd y\dd \tau
        \\
        &\qquad\qquad\qquad
        \leq 
        \varphi(0)\Big(E(0)
        +\int_{\R^d} \phi(0,y)\rho_0(y) + \psi(0,y)\cdot n_0(y)\dd y\Big)
        \\
        &\qquad\qquad\qquad\qquad
        +\int_0^\infty\!\!\varphi\int_{\R^d} \nabla h'(\rho_\infty)\cdot \Big(\frac{y}{2}(\rho-\rho_\infty)-n\Big) \dd y\dd \tau
        \\
        &\qquad\qquad\qquad\qquad\qquad
        +\int_0^\infty \varphi\,\ee^{d\tau}\mathcal K(\psi) \big( E-\EE(\tau;\rho,n\,\vert\,\rho_\infty,0)\big)\dd \tau
    \end{aligned}
    \]
    for all $\phi,\psi \in C^1([0,\infty) \times \R)$
    and all $\varphi\in C_c^1([0,\infty))$ with $\varphi\geq 0$.
    Observe that $\phi$ and $\psi$ need not have 
    compact support in time due to the presence of $\varphi$.
    We consider
    $\phi(\tau,y)=\frac{1}{2}\ee^{-\tau} \big( \frac{\ol n (\tau,y) }{\ol\rho(\tau,y) } \big)^2 + h'(\rho_\infty(\tau,y))- h'(\ol\rho(\tau,y))$
    and 
    $\psi(\tau,y) = -\ee^{-\tau} \frac{\ol n(\tau,y)}{\ol\rho(\tau,y)}$.
    Then a direct calculation that uses the same identities 
    as in the proof of Lemma~\ref{le:relentr.E2.compact.supp}
    leads to~\eqref{eq:relen.envar}
    in case~\ref{item:relentr.envar.1}.
    In case~\ref{item:relentr.envar.2}, we can subsequently use the approximation argument 
    from the proof of Theorem~\ref{thm:relentr.weak.E2},
    using the coercivity estimate from Lemma~\ref{lem:coercivity},
    to extend the class of admissible functions $(\ol\rho,\ol n)$ as asserted.
\end{proof}

Notice that, analogously to Corollary~\ref{cor:ws.uniqueness},
one can immediately derive
a weak-strong uniqueness principle 
for energy-variational solutions
from Theorem~\ref{thm:relentr.envar}.

As in the one-dimensional case, 
we characterize the long-time behavior of 
the density
by a self-similar solution to 
the porous medium equation 
and associate the momentum by Darcy's law.
The corresponding similarity profile $(\rho^*,n^*)$ 
is determined by
\begin{equation} \label{eq:profileEq.md}
\begin{aligned}
0 = \frac1\alpha \Delta \pp(\rho^*) + \frac y2 \cdot \nabla \rho^* \quad \text{ and } \quad 0 = n^* + \frac1\alpha \nabla \pp(\rho^*)
\end{aligned}
\end{equation}
such that 
\[
\rho^*-\hat\rho\to 0,
\quad n^*\to 0
\qquad
\text{as } |y|\to\infty.
\]
While existence of such a similarity profile with good decay properties 
is known for $d=1$, compare Theorem~\ref{thm:profile},
a corresponding result seems to be unknown for higher dimension.
Therefore, we make the following assumption:
\begin{enumerate}[label=\textbf{(\Alph*)}]
    \item
    \label{item:A}
    It holds $\rho^*\in C^\infty(\R^d)$
    with $\rho^*(\R^d)\subseteq\hat\rho(\R^d)$ and
    \[
    |y|(\rho^*-\hat\rho), \ |y|\,|\nabla\rho^\ast|,\,|\nabla^2\rho^*|
    \in L^\infty_{\mathrm{loc}}([0,\infty);L^1(\R^d)\cap L^\infty(\R^d)).
    \]
\end{enumerate}
Assumption~\ref{item:A} is clearly satisfied if 
$\hat\rho$ is constant, 
which yields $\rho^*\equiv\hat\rho$.
Moreover, if $\pp$ is of the form~\eqref{eq:pres.gamma},
then one can
construct a smooth self-similar solution to the porous medium equation
$\wt\rho_t=\frac{1}{\alpha}\Delta\pp(\wt\rho)$,
which leads to a similarity profile $\rho^*$
satisfying~\eqref{eq:profileEq.md},
for instance,
by following the ideas of~\cite[Thm.~16.2]{Vazq07PMEM}.
However, the decay properties of $\rho^*$ are less clear,
and in multiple dimensions,
they may depend on the angular profile $\hat\rho$
and assumption~\ref{item:A} may be a proper restriction.

We can now proceed as in the case of weak solutions and conclude the convergence towards the similarity profile $(\rho^*,n^*)$ as $\tau \to\infty$.

\begin{theorem}
    \label{thm:conv.envar}
    Let one of the following assumptions be satisfied:
    \begin{enumerate}[label=(\alph*)]
        \item \label{item:conv.envar.1}
        Let $\alpha\geq0$, 
        let $\pp\in C([0,\infty))\cap C^1(0,\infty)$ with $\pp'>0$ in $(0,\infty)$,
        let $\rho^*=\hat\rho\geq0$ be constant and $n^*=0$,
        and if $\rho^*=0$, let $\pp$ satisfy~\eqref{eq:pres.better}, 
        or
        \item \label{item:conv.envar.2}
        let $\alpha>0$, 
        let $\pp$ satisfy~\eqref{eq:pres.gamma}, 
        let $\hat\rho$ satisfy~\eqref{eq:rho.limit.angular},
        and let $(\rho^*, n^*)$ satisfy~\eqref{eq:profileEq.md} and assumption~\ref{item:A}.
    \end{enumerate}
    Let $(\rho,n,E)$ be an energy-variational solution to~\eqref{eq:Euler.trans.md}
    subject to initial data $(\rho_0,n_0)\in L^1_{\mathrm{loc}}(\R^d)\times L^1_{\mathrm{loc}}(\R^d;\R^d)$
    such that $\EE(0;\rho_0,n_0 \, \vert \, \hat\rho, 0)<\infty$. 
    In case \ref{item:conv.envar.2}, let the corresponding regularity weight 
    $\mathcal K$ be given by~\eqref{eq:K}.
    Define 
    \[
    \begin{aligned}
    \mathcal R(\tau)&:= E(\tau)-\EE(\tau;\rho(\tau),n(\tau)\,\vert\,\rho_\infty,0)+\EE(\tau;\rho(\tau),n(\tau) \, \vert \, \rho^*,n^*),
    \\
    \mathcal R_0&:= E(0)-\EE(0;\rho_0,n_0\,\vert\,\rho_\infty,0)+\EE(0;\rho_0,n_0 \, \vert \, \rho^*, n^*).
    \end{aligned}
    \]
    Then
    \begin{equation} \label{eq:convergence.envar}
    \mathcal R(\tau) \leq \ee^{-(\frac d2{-}\theta)\tau + \frac d2 \mu} \Big( \mathcal R_0 + \frac{K}{\theta} \Big) \quad \text{ for a.a.~} \tau >0,
    \end{equation}
    where 
    $\theta:=\mu:=K/ \theta:=0$ in case~\ref{item:conv.envar.1},
    and     
    \[
    \begin{aligned}
    \theta &:= \frac{1}{\alpha}\max\big\{2\|[\nabla^2 h''(\rho^*)]_\mathrm{sym,+}\|_{L^\infty(\R^d)},(\gamma-1)\|[\Delta h''(\rho^*)]_+\|_{L^\infty(\R^d)}\big\}, 
    \\
    \mu &:= \Big\| \frac{|R^*|}{2 k (\rho^*)^\gamma} + \frac{3|R^*|}{2\rho^*}  \Big\|_{L^\infty(\R)},
    \qquad\quad
    K := \int_\R |R^*|\dd y,
    \\
    R^*&:=- \frac{y}{2}\cdot\nabla n^* - \frac12 n^* + \dv\Big( \frac{n^*\otimes n^*}{\rho^*} \Big)
    \end{aligned}
    \]
    in case~\ref{item:conv.envar.2}. 
    In particular, if $\theta<\frac{d}{2}$, then
    \begin{equation}
    \label{eq:convergence.envar.split}
    E(\tau)-\EE(\tau;\rho(\tau),n(\tau)\,\vert\,\rho_\infty,0)\to0
    \qquad
    \text{and}
    \qquad
    \EE(\tau;\rho(\tau),n(\tau) \, \vert \, \rho^*, n^*)
    \to 0
    \end{equation}
    as $\tau\to\infty$. 
\end{theorem}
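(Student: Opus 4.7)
The plan is to apply the relative entropy inequality of Theorem~\ref{thm:relentr.envar} with the reference pair $(\ol\rho,\ol n)=(\rho^*,n^*)$, and then conclude via the generalized Gronwall inequality of Lemma~\ref{lemma:Gronwall.weak}. First I verify admissibility of $(\rho^*,n^*)$. In case~\ref{item:conv.envar.1}, either $\rho^*=\hat\rho>0$ is constant (so $\rho^*-\hat\rho\equiv0$ has compact support and $n^*\equiv0$) or $(\rho^*,n^*)\equiv(0,0)$ with $\pp$ satisfying~\eqref{eq:pres.better}, so hypothesis~\ref{item:relentr.envar.1} of Theorem~\ref{thm:relentr.envar} holds. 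In case~\ref{item:conv.envar.2}, the $0$-homogeneity of $\hat\rho$ together with its continuity and positivity on the unit sphere yields $\inf\hat\rho>0$, hence $\inf\rho^*>0$ by assumption~\ref{item:A}; the Darcy relation $n^*=-\frac{1}{\alpha}\nabla\pp(\rho^*)$ then converts the bounds in~\ref{item:A} on $\rho^*-\hat\rho$, $\nabla\rho^*$, $\nabla^2\rho^*$ into the bounds on $\nabla n^*$, $|y|\,|n^*|$, $|y|(\rho^*-\hat\rho)$ required by hypothesis~\ref{item:relentr.envar.2}. Finally, $\EE(0;\rho_0,n_0\,\vert\,\rho^*,n^*)<\infty$ follows from $\EE(0;\rho_0,n_0\,\vert\,\hat\rho,0)<\infty$ via the multi-dimensional analog of Proposition~\ref{prop:relent.difference}.

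Next I compute the residuals and $\Xi$. Since $\rho^*$ and $n^*$ are $\tau$-independent, the first profile equation in~\eqref{eq:profileEq.md} yields $\ol R_1=-\frac{y}{2}\cdot\nabla\rho^*+\dv n^*=0$, and Darcy's law cancels the exponentially growing term in $\ol R_2$, leaving $\ol R_2=R^*$; consequently $\xi_3\equiv0$. In case~\ref{item:conv.envar.1} we further have $n^*/\rho^*\equiv0$ and $R^*=0$, so $\Xi=0$ and $\mathcal K(-n^*/\rho^*)=\mathcal K(0)=0$, and~\eqref{eq:relen.envar} reduces to a pure Gronwall inequality with decay rate $d/2$. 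In case~\ref{item:conv.envar.2} I extend Lemmas~\ref{le:xi.1.bound} and~\ref{le:xi.2.bound} to the multi-dimensional setting (replacing scalar derivatives with $\nabla$, $\dv$, and the factor $\ee^{-\tau}$ with $\ee^{-d\tau}$) to obtain
\begin{equation*}
\Xi \leq \bigl(\theta+\mu\,\ee^{-d\tau/2}\bigr)\,\EE(\tau;\rho,n\,\vert\,\rho^*,n^*) + K\,\ee^{-d\tau/2}.
\end{equation*}
Using $n^*/\rho^*=-\frac{1}{\alpha}\nabla h'(\rho^*)$ (which follows from $\nabla\pp(\rho^*)=\rho^*\nabla h'(\rho^*)$ via~\eqref{eq:h.properties}), one identifies the regularity weight $\mathcal K(-n^*/\rho^*)$ from~\eqref{eq:K} with the constant $\theta$ of the statement.

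The final step assembles these ingredients. The decomposition
\begin{equation*}
E-\EE(\tau;\rho,n\,\vert\,\rho_\infty,0)=\calR-\EE(\tau;\rho,n\,\vert\,\rho^*,n^*)\leq\calR,
\end{equation*}
valid because $\EE(\tau;\rho,n\,\vert\,\rho^*,n^*)\geq0$, together with $\EE(\tau;\rho,n\,\vert\,\rho^*,n^*)\leq\calR$ (using $E\geq\EE(\tau;\rho,n\,\vert\,\rho_\infty,0)$ from Definition~\ref{def:envarsol}), allows to absorb both the energy-defect term weighted by $\mathcal K(-n^*/\rho^*)=\theta$ and the $\Xi$-estimate into the single variational inequality
\begin{equation*}
-\!\int_0^\infty\!\varphi'\calR\dd\tau+\int_0^\infty\!\varphi\,\tfrac{d}{2}\calR\dd\tau\leq\varphi(0)\calR_0+\int_0^\infty\!\varphi\,\bigl[(\theta+\mu \ee^{-d\tau/2})\calR+K\ee^{-d\tau/2}\bigr]\dd\tau
\end{equation*}
after dropping the nonnegative $\damp$-term. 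Lemma~\ref{lemma:Gronwall.weak} with $a(\tau)=-(d/2-\theta)+\mu\ee^{-d\tau/2}$ and $b(\tau)=K\ee^{-d\tau/2}$ then delivers~\eqref{eq:convergence.envar} after a crude bound on $\int_\sigma^\tau a(\nu)\dd\nu$, exactly as in the proof of Theorem~\ref{thm:Conv}. The limit~\eqref{eq:convergence.envar.split} follows immediately when $\theta<d/2$, since $\calR\to0$ and $\calR$ is a sum of two nonnegative quantities. The main obstacle is the bookkeeping at the interface between the two formulations of the entropy: the algebraic matching $\mathcal K(-n^*/\rho^*)=\theta$, which relies essentially on the Darcy relation of~\eqref{eq:profileEq.md}, is what collapses the energy-defect term from~\eqref{eq:relen.envar} and the $\xi_1$-contribution in $\Xi$ into a single $\theta\,\calR$ term; without this identification the Gronwall step would not yield the claimed rate $(d/2-\theta)$.
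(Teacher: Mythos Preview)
Your proposal is correct and follows essentially the same approach as the paper: apply Theorem~\ref{thm:relentr.envar} with $(\ol\rho,\ol n)=(\rho^*,n^*)$, estimate $\Xi$ via multi-dimensional analogs of Lemmas~\ref{le:xi.1.bound}--\ref{le:xi.2.bound}, identify $\mathcal K(-n^*/\rho^*)=\theta$ via Darcy's law, and close with Lemma~\ref{lemma:Gronwall.weak}. Your exposition is in fact slightly more explicit than the paper's on two points: the admissibility check for $(\rho^*,n^*)$, and the mechanism by which the $\theta$-weighted energy defect and the $\xi_1$-bound combine exactly into $\theta\calR$ (the paper only remarks that $\EE(\tau;\rho,n\,\vert\,\rho^*,n^*)\le\calR$ is used for the $\mu$-term).
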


\begin{proof}[Proof of Theorem~\ref{thm:conv.envar}]
	We use the relative entropy inequality~\eqref{eq:relen.envar} from Theorem~\ref{thm:relentr.envar} with the choice $(\ol\rho,\ol n)=(\rho^*,n^*)$
	which is admissible in both cases.
	    
    In case~\ref{item:conv.envar.1},
    we have $\xi_1=\xi_2=\xi_3=0$.
    Since $\mathcal K(0)=0$, 
    we obtain
    \[
     \int_0^\infty -\varphi'\, \mathcal R  
     +  \varphi \Big(\frac d2\mathcal R
     + \damp(\rho,n\,\vert\,\ol\rho,\ol n) \Big)\dd \tau
     \leq \varphi(0)\mathcal R_0,
    \]
    and the Gronwall inequality from Lemma~\ref{lemma:Gronwall.weak}
    yields~\eqref{eq:convergence.envar}.
    
    In case \ref{item:conv.envar.2}
    the terms $\xi_j$, $j=1,2,3$ from~\eqref{eq:xi.def.md} simplify,
    and a slight adaption of the proof of Lemma~\ref{le:estimates.Xi.profile}
    yields the estimate
    \[
    \Xi
    \leq (\theta+\mu\, \ee^{-d\tau/2})\, \EE (\tau;\rho,n \, \vert \, \rho^*,n^*) 
    +  \ee^{-d\tau/2}  K.
    \]
    Moreover, 
    using~\eqref{eq:h.properties},~\eqref{eq:profileEq.md} and~\eqref{eq:K}, we see that
    $
    \mathcal K(-\tfrac{n^*}{\rho^*})
    =\theta.
    $
    From~\eqref{eq:relen.envar} we now conclude the estimate
    \[
    \begin{aligned}
     &\int_0^\infty -\varphi'\, \mathcal R  
     +  \varphi \Big(\frac d2\mathcal R
     + \damp(\rho,n\,\vert\,\ol\rho,\ol n) \Big)\dd \tau
     \\
      &\qquad
      \leq \varphi(0)\mathcal R_0
      +\int_0^\infty \varphi \, \big[
      \mu\ee^{-d\tau/2 }\mathcal R 
      + K \ee^{-d\tau/2}
      + \theta \mathcal R\big)
      \big] \dd\tau,
    \end{aligned}  
    \]
    where we used $\EE(\tau;\rho,n\,\vert\,\rho^*,n^*) \leq \mathcal R$ 
    for the first term in the integral on the right-hand side.
    This inequality resembles~\eqref{eq:entr.ineq.final} in the proof of Theorem~\ref{thm:Conv}.
    Invoking now Gronwall's inequality from Lemma~\ref{lemma:Gronwall.weak}
    in the same way,
    we conclude~\eqref{eq:convergence.envar} in this case.

    Finally, if $\theta<\frac{d}{2}$,
    then~\eqref{eq:convergence.envar} implies $\mathcal R(\tau)\to0$ 
    as $\tau\to\infty$.
    Since $E\geq \EE(\tau;\rho,n\,\vert\,\rho_\infty,0)$,
    this shows~\eqref{eq:convergence.envar.split}.
\end{proof}

Theorem~\ref{thm:conv.envar} yields convergence of 
energy-variational solutions
towards the similarity profile
under the same flatness condition as for weak solutions, see Theorem~\ref{thm:Conv}. 
In a similar way as in Section~\ref{se:ConvProof}, 
we could also deduce convergence of the time-averaged kinetic energy.
In Theorem~\ref{thm:conv.envar}, we further deduce the convergence of the energy defect to zero,
which means that energy-variational solutions converge time-asymptotically to weak solutions in a certain sense.
Observe that in the case $\rho_+=\rho_-$ the result holds for any regularity weight $\mathcal K$,
while in the case $\rho_+\neq \rho_-$ we require the specific choice~\eqref{eq:K}. 
Clearly, one could also take a regularity weight 
that is larger than~\eqref{eq:K},
but this would lead to a more restrictive flatness condition.

\paragraph*{Acknowledgments.} 

T.~Eiter's research has been funded by Deutsche
Forschungsgemeinschaft (DFG) through grant CRC 1114 ``Scaling Cascades in
Complex Systems'', Project Number 235221301, Project YIP,
and through grant SPP 2410 ``Hyperbolic Balance Laws in Fluid Mechanics:~Complexity, Scales, Randomness (CoScaRa)'', Project Number
525941602.
The research of S.~Schindler has been funded by Deutsche Forschungsgemeinschaft (DFG) through
the Berlin Mathematics Research Center MATH+
(EXC-2046/1, DFG project no.\ 390685689) subproject ``DistFell''.
The authors thank Alexander Mielke for helpful discussions,
in particular when initiating the project.

\footnotesize

\addcontentsline{toc}{section}{References}

\newcommand{\etalchar}[1]{$^{#1}$}
\def\cprime{$'$}
\providecommand{\bysame}{\leavevmode\hbox to3em{\hrulefill}\thinspace}

\end{document}